\numberwithin{equation}{section}
\DeclareSymbolFont{cyrletters}{OT2}{wncyr}{m}{n}
\DeclareMathSymbol{\Sha}{\mathalpha}{cyrletters}{"58}
\newcommand{\Z}{\mathbb{Z}}
\newcommand{\g}{\operatorname{Ad}^0\bar{\rho}}
\newcommand{\Q}{\mathbb{Q}}
\newcommand{\F}{\mathbb{F}}
\newcommand{\On}{\mathcal{O}_{\slash n}}
\newcommand{\op}[1]{\operatorname{#1}}
\newcommand\mtx[4] { \left( {\begin{array}{cc}
   #1 & #2 \\
   #3 & #4 \\
  \end{array} } \right)}
\theoremstyle{plain}
 \theoremstyle{definition}
\newtheorem{Th}{Theorem}[section]
\newtheorem{Lemma}[Th]{Lemma}
\newtheorem{Corollary}[Th]{Corollary}
\newtheorem{Example}[Th]{Example}
\newtheorem{Hypothesis}[Th]{Hypothesis}
\newtheorem{Proposition}[Th]{Proposition}
\newtheorem{Remark}[Th]{Remark}
 \theoremstyle{definition}
\newtheorem{Definition}[Th]{Definition}
\newtheorem{Fact}[Th]{Fact}
\begin{document}

\title{On the {\Large$\mu$}-invariants of residually reducible Galois representations}
\author{Anwesh Ray}
\address{Chennai Mathematical Institute, H1, SIPCOT IT Park, Kelambakkam, Siruseri, Tamil Nadu 603103, India}
  \email{anwesh@cmi.ac.in}
\author{R. Sujatha}
\address{Department of Mathematics \\ University of British Columbia \\
  Vancouver BC, V6T 1Z2, Canada.} 
  \email{sujatha@math.ubc.ca} 
  
\begin{abstract}
The Iwasawa $\mu$-invariant of the Selmer group of a residually reducible Galois representation arising from a Hecke eigencuspform is studied. Furthermore, certain Iwasawa-invariants refining the $\mu$-invariant are defined and analyzed. As an application, we show that given a reducible mod-$p$ Galois representation $\bar{\rho}$ and any choice of integer $N\geq 1$, there is a modular Galois representation lifting $\bar{\rho}$ whose associated Selmer group has $\mu$-invariant $\geq N$. This is a refinement of Serre's conjecture in the residually reducible case. \end{abstract}

\maketitle
\section{Introduction}
Iwasawa theory of Galois representations associated to elliptic curves and modular forms encodes deep arithmetic information. Greenberg analyzed the algebraic structure of the Selmer groups associated to ordinary Galois representations over the cyclotomic $\Z_p$-extension of a number field (see \cite{greenbergIT}). {An important invariant associated with the Selmer group is its $\mu$-invariant. It is} conjectured that when the residual representation is irreducible, then the Iwasawa $\mu$-invariant of $p$-primary Selmer group is zero, see \cite[Conjecture 1.11]{greenbergIWEC}.  Let $E$ be an elliptic curve over a number field $F$ and $\op{Sel}(E/F^{\op{cyc}})$ be the $p$-primary Selmer group over the cyclotomic $\Z_p$-extension of $F$. Mazur gave examples of elliptic curves $E$ for which the residual representation $E[p]$ is reducible as a Galois module and the $\mu$-invariant of $\op{Sel}(E/F^{\op{cyc}})$ is positive, see \cite[section 10]{mazur1}. Drinen showed that to every finite Galois submodule $\alpha$ of $E[p^{\infty}]$, there is an integer $\delta(\alpha)\geq 0$ which contributes to the $\mu$-invariant of $\op{Sel}(E/F^{\op{cyc}})$, see \cite[Theorem 2.1]{drinen}.
   \par Denote by $\op{G}_{\Q}$ the absolute Galois group $\op{Gal}(\bar{\Q}/\Q)$ and fix an embedding $\iota:\bar{\Q}\hookrightarrow \bar{\Q}_p$. Let  $f=\sum_{n\geq 1} a_n e^{2\pi i \tau}$ be a normalized Hecke eigencuspform. Associated to $f$ is the Galois representation $\rho_f:\op{G}_{\Q}\rightarrow \op{GL}_2(K)$, {where} $K$ is the finite extension of $\Q_p$ generated by $\iota(a_n)$ for positive integers $n$. Let $\mathcal{O}$ be the valuation ring of $K$ and let $T$ be a $\op{G}_{\Q}$-stable $\mathcal{O}$-lattice and set $\rho_T:\op{G}_{\Q}\rightarrow \op{GL}_2(\mathcal{O})$ to be the Galois representation on $T$. As is well known, any elliptic curve $E$ over $\Q$ coincides with an eigencuspform $f$ of weight $2$ with rational Fourier coefficients. The Galois representation on the $p$-adic Tate module of $E$ then coincides with the $p$-adic Galois representation associated to $f$. We prove our results for Selmer groups associated to $p$-ordinary Hecke eigencuspforms of weight $k\geq 2$. The Selmer groups considered here are defined over admissible, pro-$p$, $p$-adic Lie extensions $\Q_{\infty}$, with Galois group $\op{G}:=\op{Gal}(\Q_{\infty}/\Q)$  (see \cite[section 2]{raysujatha}). In this case, the Iwasawa algebra $\Lambda(\op{G}):=\mathcal{O}[[\op{G}]]$ is a left and right noetherian local domain. Further, it is known that $\Lambda(\op{G})$ is an Auslander regular ring \cite[Theorem 3.26]{venjakob}. There is a dimension theory associated to finitely generated modules over such rings.
   \par In this article, we will study the $\mu$-invariant of the $p$-primary Selmer group over $\Q_{\infty}$. Furthermore, certain refinements of the $\mu$-invariant are defined and studied (see Definition $\ref{refinedmudef}$). Throughout, it is assumed that the residual representation $\bar{\rho}_T:\op{G}_{\Q}\rightarrow \op{GL}_2(\mathcal{O}/\varpi)$ is reducible. Note that the $\mu$-invariant depends on the choice of the $\op{G}_{\Q}$-stable lattice $T$. When the Galois representation arises from an elliptic curve, this lattice is taken to be the $p$-adic Tate module. Our results show that the $\mu$-invariant depends crucially on the residual representation. We classify the residual representation $\bar{\rho}_T$ into two types: \textit{aligned} or \textit{skew} (see Definition $\ref{aligneddef}$). We prove results on the $\mu$-invariants in the two mutually disjoint cases. The first example of an elliptic curve with positive $\mu$-invariant was provided by Mazur \cite[Example 2]{mazur1}. This example is residually aligned. It is shown that if $\rho$ is residually reducible and aligned, then  $\mu$ is positive (cf. Theorem $\ref{theorem1}$), furthermore, it is shown that the $\mu$-multiplicity is $1$ (cf. Definition $\ref{refinedmudef}$ and Theorem $\ref{theorem2}$).
   \par Here is a brief outline of how our results fit into a broader framework of known results and conjectures on the $\mu$-invariants of Selmer groups of elliptic curves. Greenberg conjectured that any elliptic curve over $\Q$ is isogenous to one for which the $\mu$-invariant of the Selmer group is zero. It is shown that an elliptic curve whose Galois representation is residually reducible and aligned is isogenous to one whose Galois representation is residually reducible and skew, see Proposition $\ref{alignedskew}$. Indeed, the $\mu$-invariant is positive in the residually aligned case, and in the residually skew case, examples indicate that the $\mu$-invariant vanishes. An explicit example is given in section $\ref{section 7}$ which illustrates this. This can be viewed as a justification for Greenberg's conjecture. More generally, we study the case when $\rho_{T/n}:=\rho_T\mod{\varpi^n}$ is \textit{reducible} for an integer $n>0$. This means that there are characters $\varphi_{i,n}:\op{G}_{\Q}\rightarrow \op{GL}_1(\mathcal{O}/\varpi^n)$ lifting $\varphi_i$ for $i=1,2$, such that $\rho_{T/n}\simeq \mtx{\varphi_{1,n}}{\ast}{0}{\varphi_{2,n}}$. It is shown that if $\bar{\rho}_T$ is aligned and $n>0$ is such that $\rho_{T/n}$ is reducible, then the $\mu$-invariant of the $p$-primary Selmer group of $\rho_T$ is at least $n$. In practice, it is hard to find examples of Galois representations arising from elliptic curves which are reducible modulo $p^2$. However, there are modular forms for which the associated Galois representation is reducible modulo a high power of the uniformizer, for instance, the proof of Theorem $\ref{mainlifting}$ gives an explicit construction of modular Galois representations that are reducible modulo a high power of the uniformizer. We also prove more precise results for the refined $\mu$-invariants (see Theorem $\ref{theorem1}$).
\par Here is an interesting application of our analysis of $\mu$-invariants to the deformation theory of Galois representations. Applying the results of Skinner and Wiles \cite{skinnerwiles}, Hamblen and Ramakrishna in \cite{hamblenramakrishna} proved Serre's conjecture for residually reducible Galois representations which satisfy some additional conditions. Let $\F$ be a finite field of characteristic $p$. The ring of Witt vectors $\op{W}(\F)$ is the valuation ring of the unique unramified extension of $\Q_p$ with residue field $\F$. Denote by $\bar{\chi}$ the mod-$p$ cyclotomic character. A character $\kappa:\op{G}_{\Q}\rightarrow \op{GL}_1(\F)$ is \textit{odd} if $\kappa(c)=-1$, where $c\in \op{G}_{\Q}$ denotes complex conjugation. The character $\kappa$ is \textit{even} otherwise.
\begin{Th}\label{mainlifting}
Let $N$ be a non-negative integer. Let $\bar{\rho}:\op{G}_{\Q}\rightarrow \op{GL}_2(\F)$ be a Galois representation such that $\bar{\rho}\simeq \mtx{\varphi}{\ast}{0}{1}$ for a character $\varphi:\op{G}_{\Q}\rightarrow \op{GL}_1(\F)$. Assume that the following conditions are satisfied:
\begin{enumerate}
\item $p\geq 5$, $\varphi$ is odd,
    \item $\bar{\rho}$ is indecomposable,
    \item $\varphi^2\neq 1,\varphi\neq \bar{\chi}^{-1},\varphi_{\restriction \op{G}_{\Q_p}}\neq 1, \varphi_{\restriction \op{G}_{\Q_p}}\neq\bar{\chi}_{\restriction \op{G}_{\Q_p}}$ and the image of $\varphi$ spans $\F$ over $\F_p$.
\end{enumerate}
Let $k$ be a positive integer such that $k\equiv 2\mod{p^N(p-1)}$. Then there is a Galois representation\[\rho_T:\op{G}_{\Q}\rightarrow \op{GL}_2(\op{W}(\F))\] which lifts $\bar{\rho}$ such that:
\begin{enumerate}[(i)]
\item $\rho_T$ lifts $\bar{\rho}$, i.e., $\bar{\rho}_T\simeq \bar{\rho}$,
    \item $\rho_T$ arises from a $p$-ordinary Hecke eigencuspform $f$ of weight $k$ and a choice of $\op{G}_{\Q}$-stable lattice $T$ which is residually aligned,
    \item the $p$-primary Selmer group associated to $\rho_T$ has $\mu\geq N$.
\end{enumerate}
\end{Th}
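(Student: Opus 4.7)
The plan is to construct the desired lift via a deep congruence between a $p$-ordinary Eisenstein series and a $p$-ordinary cuspidal Hecke eigenform, where the congruence holds modulo $\varpi^N$. Together with the earlier theorem of the paper relating reducibility of $\rho_{T/n}$ to the $\mu$-invariant (in the residually aligned case), such a congruence will yield $\mu \geq N$ immediately. Specifically, fix a character $\psi:\op{G}_{\Q}\to \op{GL}_1(\op{W}(\F))$ lifting $\bar{\chi}^{-1}$ together with the Teichm\"uller lift $\widetilde{\varphi}$ of $\varphi$, and consider the Eisenstein series $E$ of weight $k$ whose attached reducible pseudorepresentation is $\widetilde{\varphi}\oplus \psi\cdot \chi_{\op{cyc}}^{k-1}$. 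The weight condition $k\equiv 2\pmod{p^N(p-1)}$ is precisely calibrated so that $\psi\cdot \chi_{\op{cyc}}^{k-1}\equiv 1\pmod{\varpi^N}$, whence this Eisenstein pseudorepresentation has residual semisimplification $\varphi\oplus 1$ and is reducible modulo $\varpi^N$.

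The crucial second step is to produce a $p$-ordinary Hecke eigencuspform $f$ of weight $k$ whose Galois representation $\rho_f$ is congruent to this Eisenstein pseudorepresentation modulo $\varpi^N$, in such a way that a suitable $\op{G}_{\Q}$-stable lattice $T\subset \rho_f$ has $\bar{\rho}_T\simeq \bar{\rho}$. The hypotheses of the theorem --- $p\geq 5$, $\varphi$ odd, $\bar{\rho}$ indecomposable, $\varphi^2\neq 1$, $\varphi\neq \bar{\chi}^{-1}$, together with the local conditions $\varphi_{\restriction\op{G}_{\Q_p}}\neq 1,\bar{\chi}_{\restriction \op{G}_{\Q_p}}$ and the image condition on $\varphi$ --- are precisely the input required by Hamblen-Ramakrishna \cite{hamblenramakrishna} (based on Skinner-Wiles \cite{skinnerwiles}). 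I would invoke their machinery to obtain a modular lift of $\bar{\rho}$ that is $p$-ordinary, and then control the depth of the Eisenstein congruence via an Eisenstein-ideal / Hida-theoretic argument: the weight constraint forces the relevant Kummer-type congruences for the special $L$-values governing the constant terms of $E$, which in turn forces the Eisenstein ideal in the appropriate cuspidal Hecke algebra to be divisible by $\varpi^N$, yielding the cuspform $f$ with the desired depth of congruence.

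With $f$ produced, I would choose a $\op{G}_{\Q}$-stable lattice $T$ so that $\rho_T$ is upper-triangular modulo $\varpi^N$ with diagonal characters $(\varphi_{1,N},\varphi_{2,N})$ lifting $(\varphi,1)$; the indecomposability of $\bar{\rho}$ pins this lattice down (up to scaling) and ensures $\bar{\rho}_T\simeq \bar{\rho}$ rather than its semisimplification. By construction, $T$ is residually aligned in the sense of Definition \ref{aligneddef} and $\rho_{T/N}$ is reducible, so applying Theorem \ref{theorem1} gives the $\mu$-invariant bound $\mu\geq N$, establishing conclusion (iii); conclusions (i) and (ii) are immediate from the construction.

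The main obstacle is the second step: guaranteeing simultaneously that (a) the Eisenstein congruence is of depth at least $N$, (b) the cuspform is $p$-ordinary at $p$, and (c) the residual representation can be pinned to the prescribed indecomposable $\bar{\rho}$ rather than merely $\varphi\oplus 1$. Item (c) is the most delicate and is what forces reliance on the Hamblen-Ramakrishna framework, with its system of auxiliary ramification primes, rather than a direct Ribet-style argument; the enumerated hypotheses in the statement are exactly those needed to run that framework while preserving the deep Eisenstein congruence produced by the weight condition.
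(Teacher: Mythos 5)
Your proposal takes a genuinely different route from the paper, and it has a critical gap at exactly the point where you yourself flag difficulty. The paper explicitly eschews pseudo-representations and Eisenstein-ideal arguments; instead it runs a modified Hamblen--Ramakrishna deformation argument in which the lifting of $\bar{\rho}$ is carried out in two regimes. In Proposition \ref{prop63} the first $N$ steps of the lifting are done with obstruction classes and twisting cocycles valued in the strictly upper triangular submodule $\mathfrak{n}\subset\g$, which forces the mod-$p^N$ lift $\rho_N$ to remain of the form $\mtx{\psi_N}{\ast}{0}{1}$ while still satisfying the chosen local deformation conditions at all primes of $S\cup X_0$. Only past level $N$ does the argument pass to full $\g$-valued twists, forcing the image to eventually contain $\op{SL}_2^N$ and hence giving irreducibility; modularity then comes from Skinner--Wiles applied to the resulting ordinary characteristic-zero deformation. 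The crucial point is that reducibility of $\rho_{T/N}$ is engineered \emph{inside} the deformation argument, not extracted after the fact from a modular form.

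Your plan, by contrast, tries to produce the modular form first (via Hamblen--Ramakrishna) and then \emph{separately} ensure a depth-$N$ Eisenstein congruence via an Eisenstein-ideal or Hida-theoretic step. These two steps do not naturally combine: an arbitrary Hamblen--Ramakrishna lift of $\bar{\rho}$ of weight $k\equiv 2\bmod p^N(p-1)$ has determinant congruent to $\tilde\varphi$ modulo $p^N$, but nothing forces the full $2\times 2$ representation to be reducible modulo $\varpi^N$. Controlling the depth of the Eisenstein ideal controls only the semisimplification/pseudorepresentation, which is exactly the information the paper warns is insufficient: the $\mu$-invariant bound depends on the actual residual (indecomposable) representation $\bar{\rho}_T$ and on $\rho_{T/N}$ being aligned, not merely on traces. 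Moreover, your assertion that indecomposability of $\bar{\rho}$ ``pins the lattice down (up to scaling)'' is not justified; Ribet-style lattice arguments produce a non-split residual extension, but the extension class obtained need not coincide with the one in the prescribed $\bar{\rho}$, and matching it is precisely the obstacle that forces the paper's Galois-theoretic construction. To repair the proposal you would need to replace the Eisenstein-ideal step with the $\mathfrak{n}$-valued phase of the deformation argument (or something doing equivalent work), so that the mod-$p^N$ upper-triangularity and the identification $\bar{\rho}_T\simeq\bar{\rho}$ are both built in before modularity is invoked.
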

Thus one may not only lift a Galois representation $\bar{\rho}$ to one arising from a modular form, but one can also arrange the $\mu$-invariant to be as large as possible. The above result is a refinement of Serre's conjecture in the residually reducible case and significantly extends the main result of Hamblen-Ramakrishna \cite{hamblenramakrishna}. It crucially relies on our results on $\mu$-invariants proved in sections $\ref{section 3}$ and $\ref{section 4}$ of this paper. We emphasize that our methods provide an explicit lift of the actual representation $\bar{\rho}$, rather than that of the semisimplification of $\bar{\rho}$. This is a stronger statement altogether and is indeed crucial when it comes to showing that the lift has $\mu$-invariant $\geq N$, since the nature of the $\mu$-invariant depends on the actual residual representation $\bar{\rho}$. As a result, we do not work with pseudo-representations, and instead resort to a purely Galois theoretic construction.
\par The lifting theorem above may be contrasted with the result of Bella\"iche and Pollack \cite{bellaichepollack}, who study the variation of $\mu$-invariants in a Hida family of tame level $N=1$, hence, for Galois representations unramified away from $\{p\}$. Our approach relies on a purely Galois theoretic construction and we make no assumption on the set of primes at which $\bar{\rho}$ is ramified.
\par The paper is organized into 7 sections. Certain preliminary notions are discussed in section $\ref{section 2}$. It is in this section that the refined $\mu$-invariants are introduced. In sections $\ref{section 3}$ and $\ref{section 4}$, results are proved that characterize the $\mu$-invariants of Selmer groups associated to residually reducible Galois representations arising from Hecke eigencuspforms. In section $\ref{section 5}$, necessary preparations are made for the proof of Theorem $\ref{mainlifting}$. In section $\ref{section 6}$, Theorem $\ref{mainlifting}$ is proved. Examples are discussed in section $\ref{section 7}$.
\newline\textit{Acknowledgements:} The authors would like to thank Ravi Ramakrishna for helpful comments. The second named author gratefully acknowledges support from NSERC Discovery grant 2019-03987.
\section{Preliminaries}\label{section 2}
\par Throughout, $p$ is a fixed odd prime number and $\iota:\bar{\Q}\hookrightarrow \bar{\Q}_p$ is a fixed embedding. Let $f$ be a $p$-ordinary Hecke eigencuspform of weight $k\geq 2$ on $\Gamma_1(N)$. Associated to $f$ is the Galois representation $\rho_f:\op{G}_{\Q}\rightarrow \op{GL}_2(K)$, where $K$ is a finite extension of $\Q_p$. The representation $\rho_f$ is continuous, irreducible and unramified at primes $l\nmid Np$. Let $V$ be the underlying $K$ vector space on which $\rho_f$ acts and $T\subset V$ be a choice of a $\op{G}_{\Q}$-stable $\mathcal{O}$-lattice. The Galois action on $T$ is encoded by the integral representation: \[\rho_T: \op{G}_{\Q}\rightarrow \op{Aut}(T)\xrightarrow{\sim} \op{GL}_2(\mathcal{O}).\] Set $\bar{\rho}_T$ to denote the mod-$\varpi$ reduction of $\rho_T$. The Brauer-Nesbitt theorem implies that the semisimplification of $\bar{\rho}_T$ is independent of the lattice $T$, i.e., if $T$ and $T'$ are Galois-stable lattices in $V$, then the semisimplifications $\bar{\rho}_T^{\op{ss}}$ and $\bar{\rho}_{T'}^{\op{ss}}$ are isomorphic. In particular, if $\bar{\rho}_T$ is reducible, then so is $\bar{\rho}_{T'}$. However, it is not the case that $\bar{\rho}_T$ is isomorphic to $\bar{\rho}_{T'}$. Since the Selmer group is associated to the integral representation $\rho_T$, the choice of the lattice $T$ plays a role in the study of Iwasawa invariants. Suppose that $E$ and $E'$ are elliptic curves with Tate modules $T:=T_p(E)$ and $T':=T_p(E')$ and set $V:=T\otimes \Q_p$ (resp. $V':=T'\otimes \Q_p$). Then $E$ and $E'$ are isogenous if and only if there is an isomorphism of Galois representations $\alpha:V'\xrightarrow{\sim} V$. In this case, we may identify $V'$ with $V$ and thus, $T$ and $T'$ are both $\op{G}_{\Q}$-stable lattices in $V$. This motivates the following definition.
\begin{Definition}
Let $T$ and $T'$ be $\mathcal{O}$-lattices on which $\op{G}_{\Q}$ acts by continuous $\mathcal{O}$-linear automorphisms. Then, $T$ and $T'$ are said to be isogenous if $V:=T\otimes_{\mathcal{O}}K $ and $V':=T'\otimes_{\mathcal{O}} K$ are isomorphic as Galois representations. In particular, if $V$ if the underlying vector space of $\rho_f$, then any two $\op{G}_{\Q}$-stable $\mathcal{O}$-lattices $T$ and $T'$ in $V$ are isogenous.
\end{Definition}Let $A$ denote the $p$-divisible group $V/T$. Given an abelian group $M$, the $p^r$-torsion subgroup of $M$ is denoted $M[p^r]$, and set $M[p^{\infty}]:=\bigcup_{r\geq 1} M[p^r]$. Note that $A[p]$ is isomorphic to the residual representation $\bar{\rho}_T$. Note that when the Galois representation arises from an elliptic curve $E$ over $\Q$ and $T$ is the Tate module of $E$, then $A$ is identified with $E[p^{\infty}]$.
\par Fix a pro-$p$, admissible, $p$-adic Lie extension $\Q_{\infty}$ of $\Q$ and set $\op{G}:=\op{Gal}(\Q_{\infty}/\Q)$. Choose a finite set $S$ of prime numbers containing the primes that ramify in $\Q_{\infty}$ and the primes at which $\rho_T$ is ramified. Let $\chi:\op{G}_{\Q}\rightarrow \op{GL}_1(\Z_p)$ denote the $p$-adic cyclotomic character. For ease of notation, we shall simply denote the restriction of $\chi$ to $\op{G}_{\Q_p}$ by $\chi$. Since $f$ is assumed to be ordinary at $p$, the local Galois representation $\rho_{T\restriction \op{G}_{\Q_p}}$ is ordinary. Hence $T$ fits into a short exact sequence of $\op{G}_{\Q_p}$-modules
\begin{equation}\label{Tpm}0\rightarrow T^+\rightarrow T \rightarrow T^-\rightarrow 0,\end{equation} where $T^+\simeq \mathcal{O}(\chi^{k-1} \gamma_1)$ and $T^-\simeq \mathcal{O}( \gamma_2)$ for some finite order unramified characters $\gamma_1,\gamma_2: \op{G}_{\Q_p}\rightarrow \op{GL}_1(\mathcal{O})$ (see \cite[p. 131]{greenbergIT}). Set $A^{\pm}$ to denote the $p$-divisible group given by $T^{\pm}\otimes_{\Z_p}\Q_p/\Z_p$.
\begin{Lemma}\label{unique}
With notation as above, the choice of $T^+$ and $T^-$ is unique.
\end{Lemma}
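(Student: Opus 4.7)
The plan is to deduce uniqueness from the incompatibility of the Galois actions on $T^+$ and $T^-$ on the inertia subgroup: $T^+$ carries the ramified character $\chi^{k-1}$ as a twist while $T^-$ is unramified, so no nonzero $\op{G}_{\Q_p}$-equivariant map from the former to the latter can exist.

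Concretely, suppose $\tilde{T}^+\subset T$ is another $\op{G}_{\Q_p}$-stable $\mathcal{O}$-submodule giving a filtration of the same type, so that $\tilde{T}^+\simeq \mathcal{O}(\chi^{k-1}\tilde{\gamma}_1)$ and $\tilde{T}^-:=T/\tilde{T}^+\simeq \mathcal{O}(\tilde{\gamma}_2)$ for finite order unramified characters $\tilde{\gamma}_1,\tilde{\gamma}_2$. The first step is to consider the composition
\[
\phi:\tilde{T}^+\hookrightarrow T\twoheadrightarrow T^-,
\]
which lies in $\op{Hom}_{\op{G}_{\Q_p}}\bigl(\mathcal{O}(\chi^{k-1}\tilde{\gamma}_1),\mathcal{O}(\gamma_2)\bigr)$. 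As a rank-one $\mathcal{O}$-module this Hom group is identified with the $\op{G}_{\Q_p}$-invariants of $\mathcal{O}(\chi^{-(k-1)}\tilde{\gamma}_1^{-1}\gamma_2)$.

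The second step is the key character-theoretic computation. Since $\tilde{\gamma}_1$ and $\gamma_2$ are unramified, the character $\tilde{\gamma}_1^{-1}\gamma_2$ is trivial on inertia, whereas $\chi^{-(k-1)}$ is nontrivial on inertia for $k\geq 2$ (the cyclotomic character restricted to the inertia subgroup of $\op{G}_{\Q_p}$ has infinite image, surjecting onto $1+p\Z_p$ modulo the tame torsion). Hence $\chi^{-(k-1)}\tilde{\gamma}_1^{-1}\gamma_2$ is a nontrivial character, its $\op{G}_{\Q_p}$-invariants on $\mathcal{O}$ vanish (as $\mathcal{O}$ is a domain), and therefore $\phi=0$.

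The final step is the rank argument. From $\phi=0$ we obtain $\tilde{T}^+\subseteq T^+$, and the quotient $T^+/\tilde{T}^+$ injects into $T/\tilde{T}^+\simeq \tilde{T}^-$, which is $\mathcal{O}$-torsion free of rank one. Since $T^+$ itself is free of rank one, this forces $T^+/\tilde{T}^+=0$, so $T^+=\tilde{T}^+$. The uniqueness of $T^-$ follows from the short exact sequence \eqref{Tpm}. The main (and essentially only) obstacle is step two, which amounts to invoking the standard fact that $\chi$ is ramified on $\op{G}_{\Q_p}$; everything else is formal.
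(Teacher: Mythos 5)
Your proof is correct and follows the same underlying strategy as the paper: in both arguments the key observation is that $\chi^{k-1}$ is ramified on $\op{G}_{\Q_p}$ while the characters $\gamma_i,\tilde\gamma_i$ are unramified, which forces the ``cross term'' (the paper's coefficient $b$, your map $\phi$) to vanish, after which $\tilde T^+\subseteq T^+$ and equality follows from torsion-freeness of $\tilde T^-$. The only difference is cosmetic: you package the coordinate computation that the paper writes out explicitly into the statement that $\op{Hom}_{\op{G}_{\Q_p}}(\mathcal{O}(\chi^{k-1}\tilde\gamma_1),\mathcal{O}(\gamma_2))=\mathcal{O}(\chi^{-(k-1)}\tilde\gamma_1^{-1}\gamma_2)^{\op{G}_{\Q_p}}=0$, which is cleaner but mathematically identical.
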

\begin{proof}
Suppose that $\{e_1, e_2\}$ is an $\mathcal{O}$-module basis for $T$ for which $T^+=\mathcal{O} e_1$. With respect to this basis, the local Galois representation is given by 
\[\rho_{T}|_{\op{G}_{\Q_p}}=\mtx{\chi^{k-1}\gamma_1 }{\alpha}{0}{\gamma_2}.\]Therefore, for $g\in \op{G}_{\Q_p}$, we have that \[g e_1=\chi^{k-1}(g)\gamma_1(g) e_1\text{ and }g e_2=\alpha(g) e_1+\gamma_2(g) e_2.\] Consider another choice of $\hat{T}^+$ and $\hat{T}^-$ and let $\{\hat{e}_1, \hat{e}_2\}$ be an $\mathcal{O}$-lattice basis of $T$ such that $\hat{T}^+=\mathcal{O}\hat{e}_1$. With respect to the basis $\{\hat{e}_1, \hat{e}_2\}$ we have
\[\rho_T|_{\op{G}_{\Q_p}}=\mtx{\chi^{k-1}\hat{\gamma}_1}{\hat{\alpha}}{0}{\hat{\gamma}_2}.\]Writing $\hat{e}_1=a\hat{e}_1+b \hat{e}_2$, one obtains
\[\begin{split}& g \hat{e}_1=\chi^{k-1}(g) \hat{\gamma}_1(g) \hat{e}_1=a \chi^{k-1}(g) \hat{\gamma}_1(g) e_1+b \chi^{k-1}(g) \hat{\gamma}_1(g)e_2,\\
& g \hat{e}_1=a g e_1+b g e_2=(a \chi^{k-1}(g)\gamma_1(g)+b\alpha(g)) e_1+b \gamma_2(g) e_2.
\end{split}\]
Comparing the coefficients of $e_2$, we have that 
\begin{equation}\label{eq1}b \chi^{k-1}(g) \hat{\gamma}_1(g)=b \gamma_2(g).\end{equation} Let $g$ be in the inertia group of $\op{G}_{\Q_p}$ such that $\chi^{k-1}(g)\neq 1$. Since $\gamma_i$ and $\hat{\gamma}_i$ are unramified, it follows that $\gamma_i(g)=\hat{\gamma}_i(g)=1$ for $i=1,2$. By $\eqref{eq1}$, one obtains the relation $b(\chi^{k-1}(g)-1)=0$. From this, it follows that $b=0$ and $e_1=a\hat{e}_1$. Therefore $\hat{T}^+$ is contained in $T^+$. Note that $\hat{T}^-=T/\hat{T}^+$ contains the finite $\mathcal{O}$-submodule $T^+/\hat{T}^+$. Since $\hat{T}^+\simeq \mathcal{O}$ as an $\mathcal{O}$-module, the only finite $\mathcal{O}$-submodules are trivial. Therefore, we have shown that $T^+=\hat{T}^+$ and consequently $T^-=\hat{T}^-$ as well. Therefore, the pair $(T^+,T^-)$ is unique.
\end{proof}

\par We next recall the definition of the $p$-primary Selmer group $\op{Sel}(A/\Q_{\infty})$. The reader may refer to the discussion on \cite[p.98]{greenbergIT} for further details. Let $L$ be a number field contained in $\Q_{\infty}$. Set $L^{\op{cyc}}$ to denote the cyclotomic $\Z_p$-extension of $L$. For a prime $\eta$ of $L^{\op{cyc}}$, the completion $L^{\op{cyc}}_{\eta}$ is the union of completions at $\eta$ of all number fields contained in $L^{\op{cyc}}$. Let $\op{I}_{\eta}$ denote the inertia group of $\op{Gal}(\bar{L}_{\eta}^{\op{cyc}}/L_{\eta}^{\op{cyc}})$. At each prime $v\in S\backslash \{p\}$, set
\[\mathcal{H}_v(A/L^{\op{cyc}}):=\bigoplus_{\eta|v} \op{im}\left\{H^1(L^{\op{cyc}}_{\eta}, A)\longrightarrow H^1(\op{I}_{\eta}, A) \right\},\] where $\eta$ runs through the finite set of primes of $L^{\op{cyc}}$ that lie above $v$. Set
\[\mathcal{H}_p(A/L^{\op{cyc}}):=\bigoplus_{\eta|p} \op{im}\left\{H^1(L^{\op{cyc}}_{\eta}, A)\longrightarrow H^1(\op{I}_{\eta}, A^-) \right\},\] where $\eta$ runs through the primes of $L^{\op{cyc}}$ above $p$. For $v\in S$, denote by $\mathcal{H}_v(A/\Q_{\infty})$ the direct limit with respect to restriction maps
\[\mathcal{H}_v(A/\Q_{\infty}):=\varinjlim_L \mathcal{H}_v(A/L^{\op{cyc}}),\]where $L$ runs through all number fields contained in $\Q_{\infty}$. The Selmer group $\op{Sel}(A/\Q_{\infty})$ is the kernel of the localization map
\begin{equation}\label{lambdaS}\lambda_{S}:H^1\left(\Q_S/\Q_{\infty},A\right)\rightarrow \bigoplus_{v\in S} \mathcal{H}_v(A/\Q_{\infty})\end{equation}and is independent of the choice of $S$.
 Put $\Gamma:=\op{Gal}(\Q^{\op{cyc}}/\Q)$ and identify the Iwasawa algebra $\mathcal{O}[[\Gamma]]$ with the formal power series ring $\mathcal{O}[[T]]$ after making a choice of a topological generator $\gamma$ of $\Gamma$ and letting $T$ be equal to $\gamma-1$. Given a discrete, $p$-primary module $\rm{M}$ over $\Lambda$, set $\rm{M}^{\vee}:=\op{Hom}_{\op{cnts}}(\rm{M}, \Q_p/\Z_p)$ to denote the Pontryagin dual of $\rm{M}$. The dual Selmer group $\op{Sel}(E/\Q_{\infty})^{\vee}$ is a cofinitely generated $\Lambda(\op{G})$-module. Let $Q(\op{G})$ be the skew field of fractions of $\Lambda(\op{G})$. A module $\rm{M}$ over $\Lambda(\op{G})$ is torsion if the dimension of $\rm{M}\otimes_{\Lambda(\op{G})}Q(\op{G})$ over the skew field $Q(\op{G})$ is zero. It is necessary to make the following hypothesis on the Selmer group. 
 \begin{Hypothesis}
 Assume that $\op{Sel}(A/\Q_{\infty})^{\vee}$ is a torsion $\Lambda(\op{G})$-module.
 \end{Hypothesis} Over the cyclotomic $\Z_p$-extension, this is a well known conjecture of Mazur, and conjectured to be true more generally over an admissible $p$-adic Lie extension (see \cite[Conjecture 1.3]{coateshowson}). Let $\mathcal{PN}$ be the category of finitely generated pseudonull submodules over $\Lambda(\op{G})$.
 The structure theory of modules over $\Lambda(\op{G})$ allows us to define refined $\mu$-invariants.
 \begin{Definition}\label{refinedmudef}  Let $\rm{M}$ be a finitely generated torsion $\Lambda(\op{G})$-module, there is a decomposition
\[\rm{M}[\varpi^{\infty}]\simeq \bigoplus_{i=1}^t \left(\Lambda(\op{G})/\varpi^i\right)^{\mu_i}\mod{\mathcal{PN}},\]with $\mu_t>0$ (cf. \cite[Proposition 3.11]{Howsoncentraltorsion} and \cite[Theorem 3.40]{venjakob}).
\begin{itemize}
    \item The \textit{$\mu$-invariant} is defined by \[\mu(\rm{M}):=\begin{cases}\sum_i i \mu_i &\text{ if }t>0,\\
    0&\text{ if }t=0.\end{cases}\]
\item The vector $\vec{\mu}(\rm{M})=(\mu_1,\dots, \mu_t)$ is uniquely determined and is called the \textit{$\mu$-vector} of $\rm{M}$. If $\mu(\rm{M})=0$ then we set $\vec{\mu}(\rm{M}):=(0)$, i.e., the vector with one summand whose entry is $0$.
\item The number $t=t(\rm{M})$ is the greatest power of $\varpi$ in the decomposition and is referred to as the \textit{$\mu$-exponent}.
\item The number of cyclic summands is called the \textit{$\mu$-multiplicity}, given by $\op{r}(\rm{M}):=\mu_1+\dots +\mu_t$.
\end{itemize}
 \end{Definition}
 When $\rm{M}=\op{Sel}(A/\Q_{\infty})^{\vee}$, set $\tau(A/\Q_{\infty}):=\tau(\rm{M})$ for $\tau\in \{\mu, \vec{\mu}, t, \op{r}\}$. We point out that this notation for the $\mu$-multiplicity $\op{r}(M)$ is from \cite{perbet}, where it has been studied.
 \section{Results on positive {\large$\mu$}-invariants}\label{section 3}
 \par In this section, we prove some of the main results on $\mu$-invariants. Let $K$ be a finite extension of $\Q_p$ and $\mathcal{O}$ denote the valuation ring of $K$ with uniformizer $\varpi$. Let $T$ be an $\mathcal{O}$-lattice on which the Galois group $\op{G}_{\Q}$ acts via a modular Galois representation $\rho_T:\op{G}_{\Q}\rightarrow \op{GL}_2(\mathcal{O})$, and $T^{+}$ and $T^-$ are defined as in $\eqref{Tpm}$. Note that by Lemma $\ref{unique}$, the choice of $T^+$ and $T^-$ are unique. Choose an ordered $\mathcal{O}$-lattice basis $\mathcal{B}=\{e_1,e_2\}$ of $T$ so that $T^+=\mathcal{O}e_1$. Therefore, with respect to $\mathcal{B}$, the local representation is of the form: \[\rho_{T| \op{G}_{\Q_p}}=\mtx{\gamma_1 \chi^{k-1}}{\ast}{0}{\gamma_2},\] where $\gamma_1$ and $\gamma_2$ are finite order unramified characters. Let $\bar{e}_i$ denote the $\varpi$-reduction of $e_i$ for $i=1,2$ and set $\bar{\mathcal{B}}:=\{\bar{e}_1, \bar{e}_2\}$.
 \begin{Definition}\label{aligneddef}
  There are $2$ separate cases to consider:
  \begin{enumerate}
      \item $\rho_T$ is said to be \textit{residually aligned} if $T^+/\varpi$ is a $\op{G}_{\Q}$-stable submodule of $T/\varpi$ on which the Galois action is via an odd character $\varphi_1$. With respect to the mod-$\varpi$ basis $\bar{\mathcal{B}}$, we have that 
 \[\bar{\rho}_T=\mtx{\varphi_1}{\ast}{0}{\varphi_2}.\]
 \item $\rho_T$ is said be \textit{residually skew} if it is not residually aligned. This is the case if either:
 \begin{itemize}
     \item $T^+/\varpi$ is a $\op{G}_{\Q}$-stable submodule of $T/\varpi$ on which $\op{G}_{\Q}$ acts by an even character $\varphi_1$, and w.r.t the basis $\bar{\mathcal{B}}$,  
 \[\bar{\rho}_T=\mtx{\varphi_1}{\ast}{0}{\varphi_2}.\]
     \item $T^+/\varpi$ is not a $\op{G}_{\Q}$-stable submodule of $T/\varpi$. After replacing $e_2$ by $e_2+k e_1$ we may assume that $\F\bar{e}_2$ is a $\op{G}_{\Q}$-stable submodule, and we have that $\bar{\rho}_T$ is indecomposable of the form
 \[\bar{\rho}_T=\mtx{\varphi_1}{0}{\ast}{\varphi_2}.\]
 \end{itemize} 
  \end{enumerate}
 \end{Definition}
The following Proposition shows that for any residually aligned lattice $T$, there is an isogenous lattice $T'$ which is residually skew.
\begin{Proposition}\label{alignedskew}
Let $T$ be an $\mathcal{O}$-lattice equipped with an $\mathcal{O}$-linear $\op{G}_{\Q}$-action, and let $\rho_T:\op{G}_{\Q}\rightarrow \op{GL}_2(\mathcal{O})$ denote the integral representation on $T$. Assume that the the residual representation $\bar{\rho}_T:\op{G}_{\Q}\rightarrow \op{GL}_2(\F)$ is reducible and aligned. Then there is an isogenous $\mathcal{O}$-lattice $T'$ such that the Galois representation $\rho_{T'}$ is residually skew.
\end{Proposition}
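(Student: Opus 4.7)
The plan is to construct $T'$ explicitly as a Galois-stable sublattice of $T$ inside $V := T \otimes_{\mathcal{O}} K$, with exponent tuned to how deeply the upper-triangular residual structure of $\rho_T$ lifts to $\mathcal{O}$. The lattice $T'$ will be placed in case (b) of the residually skew definition: the ordinary filtration $T'^+/\varpi T'$ will fail to coincide with the (unique) Galois-stable line of $T'/\varpi T'$.

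First I would fix the basis $\mathcal{B} = \{e_1, e_2\}$ of $T$ adapted to the ordinary filtration so that $T^+ = \mathcal{O} e_1$. The aligned hypothesis forces $\bar{\rho}_T$ to be upper triangular in this basis; writing $\rho_T(g) = \mtx{a(g)}{b(g)}{c(g)}{d(g)}$, this says $c(g) \in \varpi \mathcal{O}$ for every $g \in \op{G}_{\Q}$. The key integer is
\[n := \max\{k \geq 1 : c(g) \in \varpi^k \mathcal{O} \text{ for all } g \in \op{G}_{\Q}\},\]
which satisfies $1 \leq n < \infty$: the lower bound comes from the aligned reduction, and finiteness comes from the fact that $\rho_T$ is irreducible over $K$, so $c$ cannot be identically zero (else $\rho_T$ would be globally upper triangular, hence reducible).

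The candidate lattice is $T' := \mathcal{O} e_1 + \mathcal{O} \varpi^n e_2$. A direct matrix computation using $c(g) \in \varpi^n \mathcal{O}$ shows $T'$ is $\op{G}_{\Q}$-stable, and since $T' \otimes_{\mathcal{O}} K = V$, the lattices $T$ and $T'$ are isogenous. In the basis $\{e_1, \varpi^n e_2\}$,
\[\rho_{T'}(g) = \mtx{a(g)}{\varpi^n b(g)}{\varpi^{-n} c(g)}{d(g)},\]
and the maximality of $n$ ensures $\overline{\varpi^{-n} c}: \op{G}_{\Q} \to \F$ is a nonzero cocycle. Hence $\bar{\rho}_{T'}$ is lower triangular and indecomposable, with unique Galois-stable line $\F \overline{\varpi^n e_2}$.

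Finally, I would verify the skew property. Since $V^+ = K e_1$ is intrinsic to $V$ and $e_1 \in T'$, Lemma \ref{unique} gives $T'^+ = \mathcal{O} e_1$, so $T'^+/\varpi T' = \F \bar{e}_1$. This is distinct from the Galois-stable line $\F \overline{\varpi^n e_2}$, so $T'^+/\varpi T'$ fails to be Galois-stable. This is exactly case (b) of the residually skew definition, and no further basis change is required. The main conceptual obstacle is pinning down the correct $n$: the naive sublattice $\mathcal{O} e_1 + \mathcal{O} \varpi e_2$ does not work whenever $\rho_T$ lifts upper-triangularity beyond one power of $\varpi$, so one must scale by the precise $\varpi$-depth of the lower-left matrix entry in order for the twisted extension class to survive mod $\varpi$.
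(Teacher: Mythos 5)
Your proof is correct and follows essentially the same route as the paper: both identify the exact power $\varpi^n$ dividing the lower-left entry $c$ of $\rho_T$ in the ordinary basis and rescale by that depth; your lattice $\mathcal{O}e_1 + \mathcal{O}\varpi^n e_2$ is the paper's $D^{-m_1}T = \mathcal{O}\varpi^{-m_1}e_1 + \mathcal{O}e_2$ up to homothety, which carries the same Galois representation. You are in fact slightly more careful than the paper in spelling out that irreducibility of $\rho_T$ over $K$ is what makes $n$ finite.

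One small caveat: the parenthetical assertion that $\bar{\rho}_{T'}$ is indecomposable with a unique Galois-stable line is stronger than what maximality of $n$ delivers, and it is not needed. Maximality only guarantees that the function $g \mapsto \overline{\varpi^{-n}c(g)}$ is not identically zero; when $\varphi_1 \neq \varphi_2$ this cocycle could still be a coboundary, in which case $\bar{\rho}_{T'}$ is decomposable with two Galois-stable lines, neither of them equal to $\F\bar{e}_1$. What your argument actually uses, and what is correct, is only that $\F\bar{e}_1 = (T')^+/\varpi T'$ fails to be Galois-stable, which does follow directly from the cocycle being a nonzero function; that already places $\rho_{T'}$ in the skew case of Definition~\ref{aligneddef}.
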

\begin{proof}
Let $\mathcal{B}=\{e_1,e_2\}$ be a basis of $T$ such that $T^+=\mathcal{O}e_1$. With respect to the choice of basis, $\rho_T$ has an integral representation 
\[\rho_T=\mtx{a}{b}{c}{d},\]and $a,b,c,d:\op{G}_{\Q}\rightarrow \mathcal{O}$ are the matrix coefficients for $\rho_T$. Note that since the residual representation $\bar{\rho}_T$ is reducible and aligned, it follows that $\varpi$ divides $c$. Let $m_1\geq 1$ be such that $\varpi^{m_1}||c$ and set $D:=\mtx{\varpi}{0}{0}{1}$. Denote by $V$ the vector space $T\otimes_{\mathcal{O}} K$ and $T' \subset V$ be the lattice $D^{-m_1} T$ with $\mathcal{O}$-basis $\{\varpi^{-m_1} e_1, e_2\}$. Note that the Galois representation on $T'$ is given by \[\rho_{T'}=D^{m_1}\rho_T D^{-m_1}=\mtx{a'}{b'}{c'}{d'}\] whith matrix coefficients $a' =a, b' =\varpi^{m_1} b, c' =\varpi^{-m_1} c$ and $d'=d$. Since $\chi$ is ramified and $\gamma$ is an unramified character $\op{G}_{\Q_p}\rightarrow \op{GL}_1(\mathcal{O})$, it is easy to show that $(T')^+=\mathcal{O}\cdot (\varpi^{-m_1}e_1)$. Since $\varpi\nmid c'$, it follows that $(T')^+/\varpi$ is not a $\op{G}_{\Q}$-stable submodule of $T'/\varpi$ and therefore, $\rho_{T'}$ is residually skew.
\end{proof}
 
 \par Throughout this section, assume that $\rho_T$ is residually reducible and aligned as in Definition $\ref{aligneddef}$. Let $\mathcal{B}=\{e_1, e_2\}$ be the basis of $T$ w.r.t which $T^+=\mathcal{O}e_1$ and let $\bar{\mathcal{B}}=\{\bar{e}_1, \bar{e}_2\}$ its mod-$\varpi$ reduction. With respect to $\bar{\mathcal{B}}$, the residual representation is given by
 \[\bar{\rho}_T=\mtx{\varphi_1}{\ast}{0}{\varphi_2}\] for an odd character $\varphi_1$. For $n>0$, let $\mathcal{B}_{/n}$ denote the mod-$\varpi^n$ reduction of $\mathcal{B}$ and $\rho_{T/n}:=\rho_T\mod{\varpi^n}$.
 \begin{Definition}
 Assume that $\rho_T$ is residually aligned and that $n>0$. Then, $\rho_{T/n}$ is said to be aligned if $T^+/\varpi^n$ is $\op{G}_{\Q}$-stable. The \textit{degree of alignment} for $\rho_T$ is the largest value of $n>0$ such that $\rho_{T/n}$ is aligned.
 \end{Definition} 
 In order to simplify notation, set $\On:=\mathcal{O}/\varpi^n$ and $\Lambda_n(\op{G}):=\On[[\op{G}]]$. Let $\Omega(\op{G}):=\F[[\op{G}]]$, thus $\Lambda_1(\op{G})=\Omega(\op{G})$. Note that if $\rho_{T/n}$ is aligned, then there are characters $\varphi_{i,n}:\op{G}_{\Q}\rightarrow \op{GL}_1(\mathcal{O}_{/n})$ for $i=1,2$ lifting $\varphi_i$ such that with respect to the mod-$\varpi^n$ reduction of the basis $\mathcal{B}$, we have that
 \[\rho_{T/n}=\mtx{\varphi_{1,n}}{\ast}{0}{\varphi_{2,n}}.\]
 Note that since $\varphi_1$ is odd, so is $\varphi_{1,n}$.
 For any character $\delta:\op{G}_{\Q}\rightarrow \op{GL}_1(\mathcal{O})$, denote by $\delta_n$ the mod-$\varpi^n$ reduction of $\delta$. For any character $\alpha:\op{G}_{\Q}\rightarrow \op{GL}_1(\F)$, denote by \[\tilde{\alpha}:\op{G}_{\Q}\rightarrow \op{GL}_1(\mathcal{O})\] the Teichm\"uller lift of $\alpha$.
 \begin{Definition}\label{liftabledefchar}
 Let $n$ be a positive integer. A character $\kappa: \op{G}_{\Q}\rightarrow \op{GL}_1(\mathcal{O}_{/n})$ is said to be \textit{liftable} if may be expressed as a product $\kappa=\chi_n^i \tilde{\alpha}_n$, where $i$ is an integer and $\tilde{\alpha}_n$ is the mod-$\varpi^n$ reduction of the Teichm\"uller lift $\tilde{\alpha}$ of a character
\[\alpha:\op{G}_{\Q}\rightarrow \op{GL}_1(\F).\]
 \end{Definition}
 A liftable character $\kappa=\chi_n^i \tilde{\alpha}_n$ exhibits a lift to characteristic zero, namely, $\chi^i \tilde{\alpha}$. Note that when $n=1$, $\kappa$ is always liftable. For residually aligned representations, the following result shows that the larger the degree of alignment, the larger the $\mu$-invariant.

 \begin{Th}\label{theorem1}
Let $n>0$ be such that $\rho_{T/n}$ is aligned and $\varphi_{1,n}$ is liftable in the sense of Definition $\ref{liftabledefchar}$. Then the $\mu$-exponent $t(A/\Q_{\infty})$ is $\geq n$. In particular, this implies that the $\mu$-invariant $\mu(A/\Q_{\infty})$ is also $\geq n$.
 \end{Th}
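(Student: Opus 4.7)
The plan is to exhibit an explicit quotient of $\op{Sel}(A/\Q_\infty)^\vee$ (up to pseudonull) that carries a $\Lambda(\op{G})/\varpi^n$-summand, which by the structure theorem in Definition~\ref{refinedmudef} forces $t(A/\Q_\infty)\geq n$; the inequality $\mu(A/\Q_\infty)\geq n$ is then immediate from the definition of the $\mu$-invariant. The source of this summand will be a distinguished $\op{G}_\Q$-stable submodule of $A[\varpi^n]$ cut out by alignment, whose Galois action lifts to characteristic zero thanks to liftability.

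First, alignment of $\rho_{T/n}$ gives a $\op{G}_\Q$-stable submodule $B:=T^+/\varpi^n T^+$ of $A[\varpi^n]$, of exponent exactly $\varpi^n$, on which $\op{G}_\Q$ acts through $\varphi_{1,n}$. Liftability lets us write $\varphi_{1,n}=\chi_n^i\tilde\alpha_n$, so $B\simeq\mathcal{O}/\varpi^n(\psi)$ as $\op{G}_\Q$-modules, where $\psi:=\chi^i\tilde\alpha\colon\op{G}_\Q\to\op{GL}_1(\mathcal{O})$ is an odd character lifting $\varphi_1$ to characteristic zero. A crucial structural observation: locally at $p$, the embedding $B\hookrightarrow A$ factors through $A^+$, so any class in $H^1(\Q_S/\Q_\infty,B)$ maps to zero in $H^1(I_\eta,A^-)$ and therefore automatically satisfies the Greenberg ordinary local condition at $p$ defining $\op{Sel}(A/\Q_\infty)$.

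I would then form the Selmer group $\op{Sel}(B/\Q_\infty)$ using the same unramified local conditions at $v\neq p$, and consider the map $\op{Sel}(B/\Q_\infty)\to\op{Sel}(A/\Q_\infty)$ induced by $B\hookrightarrow A$. Its kernel is a quotient of $H^0(\Q_\infty,A/B)$, a cofinitely generated $\mathcal{O}$-module and hence pseudonull as a $\Lambda(\op{G})$-module. Dually, modulo pseudonull, one obtains a surjection $\op{Sel}(A/\Q_\infty)^\vee\twoheadrightarrow\op{Sel}(B/\Q_\infty)^\vee$; if the right-hand side admits a non-pseudonull $\Lambda(\op{G})/\varpi^n$-summand in its structure decomposition, then lifting a generator back to $\op{Sel}(A/\Q_\infty)^\vee$ yields a non-pseudonull element of order at least $\varpi^n$, and so $t(A/\Q_\infty)\geq n$. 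To produce the desired summand, I would realize $B$ as $\mathcal{B}[\varpi^n]$ for the divisible module $\mathcal{B}:=K/\mathcal{O}(\psi)$, relate $\op{Sel}(B/\Q_\infty)$ to the $\varpi^n$-torsion of $\op{Sel}(\mathcal{B}/\Q_\infty)$ via the exact sequence $0\to B\to\mathcal{B}\xrightarrow{\varpi^n}\mathcal{B}\to 0$, and use the global Euler characteristic formula combined with the oddness of $\psi$ over the admissible extension $\Q_\infty/\Q$ to guarantee that $\op{Sel}(\mathcal{B}/\Q_\infty)^\vee$ carries enough $\Lambda(\op{G})$-structure to survive passage to mod-$\varpi^n$ torsion with a non-pseudonull $\Lambda(\op{G})/\varpi^n$-summand.

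The main obstacle is the Iwasawa-theoretic input supplying this non-pseudonull $\Lambda(\op{G})/\varpi^n$-summand of $\op{Sel}(B/\Q_\infty)^\vee$ over the noncommutative algebra $\Lambda(\op{G})$. The comparison of Selmer groups in the previous paragraph is essentially formal; the real arithmetic content is how alignment (producing the submodule $B$ of exponent exactly $\varpi^n$) and liftability (producing the odd characteristic-zero character $\psi$) combine with the global cohomology of $\Q_\infty$ to force the requisite $\mu$-growth.
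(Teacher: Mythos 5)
Your overall strategy coincides with the paper's: extract the $\op{G}_{\Q}$-stable submodule $B := A^+[\varpi^n] \simeq (\mathcal{O}/\varpi^n)(\varphi_{1,n})$ provided by alignment, observe that cohomology classes valued in $B$ automatically satisfy the ordinary local condition at $p$, use liftability to realise $B = (K/\mathcal{O}(\psi))[\varpi^n]$ for an odd characteristic-zero character $\psi$, invoke the global Euler characteristic formula together with oddness of $\psi$ to force positive $\Lambda_n(\op{G})$-corank of a suitable Selmer group for $B$, and transport this back to $\op{Sel}(A/\Q_{\infty})[\varpi^n]$, concluding via the structure theory of the $\varpi$-torsion (equation~\eqref{strtheory}). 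This is exactly the chain Lemma~\ref{globalECLemma2} $\to$ Lemma~\ref{lemma37} $\to$ Corollary~\ref{corollory38} $\to$ Proposition~\ref{prop33} $\to$ Proposition~\ref{prop32} in the paper.

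However, two of your steps need repair. First, you claim the kernel of $\op{Sel}(B/\Q_{\infty})\to\op{Sel}(A/\Q_{\infty})$ is controlled by $H^0(\Q_S/\Q_{\infty},A/B)$, ``a cofinitely generated $\mathcal{O}$-module and hence pseudonull as a $\Lambda(\op{G})$-module.'' This implication is \emph{false} when $\op{G}$ has dimension one (for instance $\Q_{\infty}=\Q^{\op{cyc}}$, $\op{G}=\Gamma$): over $\mathcal{O}[[T]]$, pseudonull means finite, and a cofinitely generated $\mathcal{O}$-module need not be finite. The fix, and what the paper does, is to factor the comparison as $B\hookrightarrow A[\varpi^n]\hookrightarrow A$ so that every $H^0$ that arises is genuinely finite: the first step is controlled by $H^0(\Q_S/\Q_{\infty},A^-[\varpi^n])$, finite since $A^-[\varpi^n]$ is a finite module (Proposition~\ref{prop33}), and the second by $H^0(\Q_S/\Q_{\infty},A)/\varpi^n$, finite since $H^0(\Q_S/\Q_{\infty},A)$ is cofinitely generated over $\mathcal{O}$ (Proposition~\ref{prop32}). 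Second, you flag the production of the non-pseudonull $\Lambda(\op{G})/\varpi^n$-summand as ``the main obstacle'' and leave it as a sketch, but the missing piece is not only the global Euler characteristic input (Lemma~\ref{globalECLemma} applied to $K/\mathcal{O}(\psi)$, giving $\op{corank}_{\Lambda(\op{G})}H^1(\Q_S/\Q_{\infty},K/\mathcal{O}(\psi))\geq 1$ and hence $\op{corank}_{\Lambda_n(\op{G})}H^1(\Q_S/\Q_{\infty},B)\geq 1$): one also needs Lemma~\ref{lemma37}, that the local modules $\mathcal{H}_v(A^+[\varpi^n]/\Q_{\infty})$ for $v\neq p$ are copseudonull over $\Lambda(\op{G})$. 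That is what lets the positive corank of the full $H^1$ descend to the Selmer subgroup; your proposal does not engage with this local analysis, and without it the argument is incomplete.
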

 We also prove results on the $\mu$-multiplicity. Let $\mathcal{L}_{\infty}$ be the maximal unramified abelian pro-$p$ extension of $\Q_{\infty}(\varphi_2)$ and let $X_{\infty}:=\op{Gal}(\mathcal{L}_{\infty}/\Q_{\infty}(\varphi_2))$. Note that $X_{\infty}$ is a cofinitely generated $\Lambda(\op{G})$-module. When $\Q_{\infty}$ is the cyclotomic $\Z_p$-extension, Ferrero and Washington \cite{ferrerowashington} proved that it has $\mu$-invariant equal to zero.
 \begin{Th}\label{theorem2}
Assume that $\rho_T$ is a residually reducible and that $X_{\infty}$ is a cotorsion $\Lambda(\op{G})$-module with $\mu=0$. Then the $\mu$-multiplicity $\op{r}(A/\Q_{\infty})$ is $\leq 1$. Moreover, if $\rho_T$ is residually aligned, then $\op{r}(A/\Q_{\infty})=1$.
 \end{Th}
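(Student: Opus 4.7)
Setting $M := \op{Sel}(A/\Q_{\infty})^{\vee}$, the structure theorem of Definition~\ref{refinedmudef} together with Pontryagin duality identifies
\[ \op{r}(A/\Q_{\infty}) = \op{corank}_{\Omega(\op{G})}\bigl(\op{Sel}(A/\Q_{\infty})[\varpi]\bigr), \]
since each summand $\Lambda(\op{G})/\varpi^{n_j}$ in the decomposition of $M[\varpi^{\infty}]$ contributes one copy of $\Omega(\op{G})$ to $M/\varpi = (\op{Sel}(A/\Q_{\infty})[\varpi])^{\vee}$, while the remaining characteristic summands contribute only $\Omega(\op{G})$-torsion. The theorem is thus reduced to bounding this corank by one.

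Applying the Bockstein sequence for $0 \to A[\varpi] \to A \xrightarrow{\varpi} A \to 0$ and a prime-by-prime comparison of Selmer conditions identifies $\op{Sel}(A/\Q_{\infty})[\varpi]$ with a residual Selmer group $\op{Sel}(\bar\rho_T/\Q_{\infty})$, up to a pseudonull discrepancy coming from $A^{\op{G}_{\Q_{\infty}}}/\varpi$ and local subquotients. The reducibility of $\bar\rho_T$ supplies a short exact sequence of $\op{G}_{\Q}$-modules
\[ 0 \to \F(\alpha) \to \bar\rho_T \to \F(\beta) \to 0, \]
with $\F(\alpha)$ the global Galois-stable line; in the aligned and first skew subcases $(\alpha,\beta) = (\varphi_1,\varphi_2)$, and in the skew-indecomposable subcase $(\alpha,\beta) = (\varphi_2,\varphi_1)$. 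Propagating the Selmer conditions yields
\[ \op{Sel}(\F(\alpha)/\Q_{\infty})' \longrightarrow \op{Sel}(\bar\rho_T/\Q_{\infty}) \longrightarrow \op{Sel}(\F(\beta)/\Q_{\infty})', \]
the primes denoting the induced Selmer structures.

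In the aligned and first skew subcases $\F(\beta) = \F(\varphi_2)$ coincides with $\bar\rho_T^{-}$ at primes above $p$, so the induced condition is unramified everywhere, and $\op{Sel}(\F(\varphi_2)/\Q_{\infty})'$ becomes the $\varphi_2$-isotypic component of $\op{Hom}(X_{\infty},\F(\varphi_2))$. The hypothesis $\mu(X_{\infty}) = 0$ makes $X_{\infty}/\varpi$ pseudonull, forcing $\op{corank}_{\Omega(\op{G})} \op{Sel}(\F(\varphi_2)/\Q_{\infty})' = 0$. Dually, $\F(\alpha) = \F(\varphi_1)$ agrees with $\bar\rho_T^{+}$ at primes above $p$, so the connecting map into $\bar\rho_T^{-}$ vanishes on $\F(\alpha)$ and the induced condition at $p$ is trivial; a Greenberg-type Poitou--Tate / Euler characteristic computation then yields $\op{corank}_{\Omega(\op{G})} \op{Sel}(\F(\varphi_1)/\Q_{\infty})' \leq 1$, with the bound attained precisely when $\varphi_1$ is odd. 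Combining the two estimates gives $\op{r}(A/\Q_{\infty}) \leq 1$; the skew-indecomposable subcase follows from the symmetric argument with $\alpha$ and $\beta$ swapped.

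For the equality in the aligned case, apply Theorem~\ref{theorem1} with $n = 1$: every mod-$\varpi$ character is liftable (take $i = 0$ in Definition~\ref{liftabledefchar}), so $\mu(A/\Q_{\infty}) \geq 1$, which forces $\op{r}(A/\Q_{\infty}) \geq 1$ and hence equality. The principal technical obstacle will be justifying the pseudonull identification between $\op{Sel}(A/\Q_{\infty})[\varpi]$ and $\op{Sel}(\bar\rho_T/\Q_{\infty})$ and matching the induced local Selmer structures on $\F(\alpha)$ and $\F(\beta)$; this is especially delicate in the skew-indecomposable subcase, where the global filtration and the local ordinary filtration at $p$ fail to coincide and a change of basis is needed before the local analysis can proceed.
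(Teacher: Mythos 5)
Your proposal follows essentially the same route as the paper: reduce $\op{r}(A/\Q_{\infty})$ to the $\Omega(\op{G})$-corank of the residual Selmer group of $A[\varpi]$ (the paper's Lemma~\ref{mumultbasiclemma}), filter $A[\varpi]$ by the Galois-stable line to split that corank into a sub-piece bounded by $1$ via the global Euler characteristic (the paper's Lemma~\ref{globalECLemma2}) and a quotient-piece killed by the $\mu(X_{\infty})=0$ hypothesis (the paper's Lemma~\ref{psudonull lemma}), then use Theorem~\ref{theorem1} with $n=1$ for the lower bound in the aligned case. The one place you gloss over is the skew-indecomposable subcase --- where the global stable line is $\F(\varphi_2)$ (even) and the quotient is $\F(\varphi_1)$ (odd), so the roles of the even character and the $X_{\infty}$-argument do \emph{not} simply swap --- but you correctly flag this transversality between the global and local-at-$p$ filtrations as the delicate point, and the paper itself dispatches the skew case with a single sentence.
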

\par Theorem $\ref{theorem1}$ will be proved at the end of this section and Theorem $\ref{theorem2}$ in the next section. We define a residual Selmer group associated to $A[\varpi^n]$, by defining local conditions at each prime $v\in S$. Let $L$ be a number field contained in $\Q_{\infty}$. For $v\in S\backslash \{p\}$, define \[\mathcal{H}_v(A[\varpi^n]/L^{\op{cyc}}):=\bigoplus_{\eta|v} \op{im}\left\{H^1(L^{\op{cyc}}_{\eta}, A[\varpi^n])\longrightarrow H^1(\op{I}_{\eta}, A[\varpi^n]) \right\},\]where $\eta$ ranges over the primes of $L^{\op{cyc}}$ such that $\eta|v$. Suppose that $v=p$ and $\eta|p$ a prime of $L^{\op{cyc}}$. Recall that $\op{I}_{\eta}$ is the inertia group at $\eta$. The local condition at $p$ is defined as follows: \[\mathcal{H}_p(A[\varpi^n]/L^{\op{cyc}}):= \bigoplus_{\eta|p} \op{im}\left\{H^1(L^{\op{cyc}}_{\eta}, A[\varpi^n])\longrightarrow H^1(\op{I}_{\eta}, A[\varpi^n])\right\},\] where $\eta$ ranges over the primes of $L^{\op{cyc}}$ such that $\eta|p$.
For $v\in S$, denote by $\mathcal{H}_v(A[\varpi^n]/\Q_{\infty})$ the direct limit with respect to restriction maps
\[\mathcal{H}_v(A[\varpi^n]/\Q_{\infty}):=\varinjlim_L \mathcal{H}_v(A[\varpi^n]/L^{\op{cyc}}),\]where $L$ runs through all number fields contained in $\Q_{\infty}$. The mod-$\varpi^n$ residual Selmer group is the defined as follows:
\[\op{Sel}(A[\varpi^n]/\Q_{\infty}):=\op{ker}\left(H^1(\Q_S/\Q_{\infty}, A[\varpi^n])\rightarrow \bigoplus_{v\in S}\mathcal{H}_v(A[\varpi^n]/\Q_{\infty}) \right).\] Note that it is a module over $\Lambda_n(\op{G})$.

 \begin{Proposition}\label{prop32}
 Suppose that the mod-$\varpi^n$ Selmer group $\op{Sel}(A[\varpi^n]/\Q_{\infty})$ has positive corank as a $\Lambda_n(\op{G})$-module. Then $\mu(A/\Q_{\infty})\geq n$ and $t(A/\Q_{\infty})\geq n$. 
\end{Proposition}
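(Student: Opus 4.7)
The plan is to compare the mod-$\varpi^n$ Selmer group with the $\varpi^n$-torsion of the usual Selmer group via a Kummer-type diagram chase, and then to read off the corank hypothesis from the $\varpi$-primary decomposition of $M:=\op{Sel}(A/\Q_{\infty})^{\vee}$.

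First I would apply $\op{Gal}(\Q_S/\Q_{\infty})$-cohomology to the short exact sequence $0\to A[\varpi^n]\to A\xrightarrow{\varpi^n} A\to 0$. Since $A$ is $\varpi$-divisible, this yields a surjection
\[H^1(\Q_S/\Q_{\infty},A[\varpi^n])\twoheadrightarrow H^1(\Q_S/\Q_{\infty},A)[\varpi^n]\]
with kernel $H^0(\Q_S/\Q_{\infty},A)/\varpi^n$, and there are analogous local sequences for the cohomology groups entering the definitions of the $\mathcal{H}_v(\cdot/\Q_{\infty})$, including the twisted version at primes above $p$ involving $A^-$. Assembling these into the two Selmer diagrams and applying the snake lemma produces a natural map
\[\psi:\op{Sel}(A[\varpi^n]/\Q_{\infty})\longrightarrow \op{Sel}(A/\Q_{\infty})[\varpi^n]\]
whose kernel and cokernel are subquotients of $\mathcal{O}$-cofinitely generated modules built from $A^{G_{\Q_{\infty}}}/\varpi^n$ and its local analogues. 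Over admissible pro-$p$ extensions these error modules are $\Lambda_n(\op{G})$-cotorsion; in particular they contribute nothing to the $\Lambda_n(\op{G})$-corank, so the hypothesis transfers to give $\op{Sel}(A/\Q_{\infty})[\varpi^n]$ positive $\Lambda_n(\op{G})$-corank.

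Pontryagin duality identifies $\op{Sel}(A/\Q_{\infty})[\varpi^n]^{\vee}$ with $M/\varpi^n M$, so it remains to show that if $M/\varpi^n M$ has positive $\Lambda_n(\op{G})$-rank modulo pseudo-null, then $t(M)\geq n$ and $\mu(M)\geq n$. For this I would invoke the decomposition of Definition \ref{refinedmudef}, namely $M[\varpi^{\infty}]\simeq\bigoplus_{i=1}^{t}\left(\Lambda(\op{G})/\varpi^i\right)^{\mu_i}\pmod{\mathcal{PN}}$. Each summand $\Lambda(\op{G})/\varpi^i$ reduces modulo $\varpi^n$ to $\Lambda(\op{G})/\varpi^{\min(i,n)}$, whose $\Lambda_n(\op{G})$-rank is $1$ when $i\geq n$ and $0$ when $i<n$; meanwhile the non-$\varpi$-primary quotient $M/M[\varpi^{\infty}]$ is $\Lambda$-torsion and $\varpi$-torsion-free, hence modulo $\mathcal{PN}$ decomposes into cyclic modules $\Lambda(\op{G})/(f)$ with $f$ coprime to $\varpi$, each of which reduces to a $\Lambda_n(\op{G})$-torsion module and so contributes no rank. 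Therefore the $\Lambda_n(\op{G})$-rank of $M/\varpi^n M$ modulo pseudo-null equals $\sum_{i\geq n}\mu_i$. Positivity forces some $\mu_i$ with $i\geq n$ to be non-zero, from which $t(A/\Q_{\infty})\geq n$ and $\mu(A/\Q_{\infty})=\sum_j j\mu_j\geq i\mu_i\geq n$ follow at once.

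The main obstacle will be controlling the local error terms contributed by $\psi$. One must verify that the reduction maps $H^1(I_{\eta},A[\varpi^n])\to H^1(I_{\eta},A)[\varpi^n]$ at primes $\eta$ lying above $v\in S\setminus\{p\}$, and $H^1(I_{\eta},A^-[\varpi^n])\to H^1(I_{\eta},A^-)[\varpi^n]$ at primes above $p$, match the images that define the Selmer conditions, with all kernels and cokernels being pseudo-null or at least $\Lambda_n(\op{G})$-cotorsion. This requires standard local duality together with the finiteness of local Galois cohomology over admissible $p$-adic Lie extensions, and invokes Auslander regularity of $\Lambda(\op{G})$ to license the pseudo-null manipulations throughout.
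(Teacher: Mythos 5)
Your proposal is correct and follows essentially the same route as the paper: compare $\op{Sel}(A[\varpi^n]/\Q_{\infty})$ with $\op{Sel}(A/\Q_{\infty})[\varpi^n]$ via the Kummer sequence (the paper shows the kernel is even finite, not merely cotorsion, and remarks that the map is a pseudo-isomorphism), then dualize to $M/\varpi^n M$ and read off $t\geq n$ and $\mu\geq n$ from the $\varpi$-primary decomposition. One small caution: your claim that $M/M[\varpi^{\infty}]$ decomposes modulo $\mathcal{PN}$ into cyclic modules $\Lambda(\op{G})/(f)$ with $f$ coprime to $\varpi$ is specific to the commutative case; for a general admissible $p$-adic Lie group $\op{G}$ the paper instead cites a lemma of Lim to obtain $M/\varpi^n\simeq\bigoplus_i\bigl(\Lambda(\op{G})/\varpi^{\min\{i,n\}}\bigr)^{\mu_i}\bmod\mathcal{PN}$, which delivers the same conclusion that the non-$\varpi$-primary part contributes no $\Lambda_n(\op{G})$-rank.
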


\begin{proof}
Note that we have an isomorphism\[\op{Sel}(A/\Q_{\infty})[\varpi^n]^{\vee} \simeq \frac{\op{Sel}(A/\Q_{\infty})^{\vee}}{\varpi^n \op{Sel}(A/\Q_{\infty})^{\vee}}.\]Recall that $\mathcal{PN}$ is the category of pseudonull $\Lambda(\op{G})$-modules. Setting $\rm{M}:=\op{Sel}(A/\Q_{\infty})^{\vee}$, we have that \begin{equation}\label{equation1}\rm{M}[\varpi^{\infty}]\simeq \bigoplus_i \left(\Lambda(\op{G})/\varpi^i\right)^{\mu_i}\mod{\mathcal{PN}}.\end{equation}
As $\rm{M}$ is assumed to be a torsion $\Lambda(\op{G})$-module, it follows from the proof of \cite[Lemma 2.4.1]{mflim} that 
\begin{equation}\label{strtheory}\rm{M}/\varpi^n \simeq \left(\Lambda(\op{G})/\varpi^{\op{min}\{i,n\}}\right)^{\mu_i}\mod{\mathcal{PN}}.\end{equation} Therefore, $\rm{M}/\varpi^n$ has positive
$\Lambda_n(\op{G})$-rank if and only if $t(A/\Q_{\infty})\geq n$. Moreover, if the invariant $t(A/\Q_{\infty})\geq n$, then it is clear that $\mu(A/\Q_{\infty})\geq n$. The Kummer sequence
\[0\rightarrow A[\varpi^n]\rightarrow A\xrightarrow{\varpi^n} A\rightarrow 0\] induces a map \begin{equation}\label{finitekernel}\op{Sel}(A[\varpi^n]/\Q_{\infty})\rightarrow \op{Sel}(A/\Q_{\infty})[\varpi^n]\end{equation}with finite kernel. The result follows.
\end{proof}
\begin{Remark}
It follows from standard arguments that the map $\eqref{finitekernel}$ is a pseudo-isomorphism. However, for our purposes it suffices to show that the kernel is finite, thereby shortening the argument.
\end{Remark}
For $n>0$, $A[\varpi^n]$ is identified with $T/\varpi^n$ as is $A^{\pm}[\varpi^n]$ with $T^{\pm}/\varpi^n$. Recall that since it is assumed that $\rho_{T/n}$ is aligned, $A^+[\varpi]$ is $\op{G}_{\Q}$-stable and there is an odd character $\varphi_{1,n}$ such that $A^+[\varpi^n]$ is isomorphic to $\On(\varphi_{1,n})$. In order to analyze the residual Selmer group associated to $A[\varpi^n]$, we consider a Selmer group associated to $A^+[\varpi^n]=\On(\varphi_{1,n})$ via local conditions at each prime $v\in S$. Let $L$ be a number field contained in $\Q_{\infty}$. At each prime $v\in S$, set
\[\mathcal{H}_v(A^+[\varpi^n]/L^{\op{cyc}}):=\begin{cases}  \bigoplus_{\eta|v} \op{im}\left\{H^1(L^{\op{cyc}}_{\eta}, A^+[\varpi^n])\rightarrow H^1(\op{I}_{\eta}, A^+[\varpi^n]) \right\}&\text{ if }v\neq p,\\
0 &\text{ if } v=p.\end{cases}\] When $v\neq p$, $\eta$ runs through the finite set of primes of $L^{\op{cyc}}$ that lie above $v$. For $v\in S$, denote by $\mathcal{H}_v(A^+[\varpi^n]/\Q_{\infty})$ the direct limit with respect to restriction maps
\[\mathcal{H}_v(A^+[\varpi^n]/\Q_{\infty}):=\varinjlim_L \mathcal{H}_v(A^+[\varpi^n]/L^{\op{cyc}}),\]where $L$ runs through all number fields contained in $\Q_{\infty}$. The Selmer group $\op{Sel}(A^+[\varpi^n]/\Q_{\infty})$ is the kernel of the localization map
\begin{equation}\label{lambdaS}\lambda_{S}:H^1\left(\Q_S/\Q_{\infty},A^+[\varpi^n]\right)\rightarrow \bigoplus_{v\in S} \mathcal{H}_v(A^+[\varpi^n]/\Q_{\infty}).\end{equation}
It is clear that the inclusion
\[A^+[\varpi^n]\hookrightarrow A[\varpi^n]\]induces a map of mod-$\varpi^n$ Selmer groups
\begin{equation}\label{modpin}\iota:\op{Sel}(A^+[\varpi^n]/\Q_{\infty})\rightarrow \op{Sel}(A[\varpi^n]/\Q_{\infty}).\end{equation}

\begin{Proposition}\label{prop33}
Suppose that $\Lambda_n(\op{G})$-corank of $\op{Sel}(A^+[\varpi^n]/\Q_{\infty})$ is positive. Then $\mu(A/\Q_{\infty})\geq n$ and $t(A/\Q_{\infty})\geq n$.
\end{Proposition}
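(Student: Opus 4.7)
\par The plan is to invoke Proposition \ref{prop32}: it suffices to show that $\op{Sel}(A[\varpi^n]/\Q_{\infty})$ has positive $\Lambda_n(\op{G})$-corank. I would use the comparison map $\iota$ from \eqref{modpin}, arising from the short exact sequence of $\op{G}_{\Q}$-modules
\[0\longrightarrow A^+[\varpi^n]\longrightarrow A[\varpi^n]\longrightarrow A^-[\varpi^n]\longrightarrow 0,\]
which is globally defined precisely because $\rho_{T/n}$ is aligned. The argument splits into a global kernel analysis of $\iota$ at the level of $H^1(\Q_S/\Q_{\infty}, -)$, followed by a comparison of the local conditions at the prime $p$.

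\par For the kernel, the long exact sequence in continuous cohomology shows that the kernel of $H^1(\Q_S/\Q_{\infty}, A^+[\varpi^n])\rightarrow H^1(\Q_S/\Q_{\infty}, A[\varpi^n])$ is a quotient of $H^0(\Q_S/\Q_{\infty}, A^-[\varpi^n])$. Since $A^-[\varpi^n]$ is a cyclic $\On$-module on which $\op{G}_{\Q}$ acts via $\varphi_{2,n}$, this $H^0$ is finite, and therefore pseudo-null over $\Lambda_n(\op{G})$. Consequently the restriction of $\iota$ to $\op{Sel}(A^+[\varpi^n]/\Q_{\infty})$ has pseudo-null kernel, so its image in $H^1(\Q_S/\Q_{\infty}, A[\varpi^n])$ retains the positive $\Lambda_n(\op{G})$-corank of the domain.

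\par For the local conditions, at each $v\in S\setminus\{p\}$ the unramified condition defining $\op{Sel}(A^+[\varpi^n]/\Q_{\infty})$ is carried through $\iota$ to the corresponding condition for $\op{Sel}(A[\varpi^n]/\Q_{\infty})$ by functoriality of restriction-to-inertia. At $v=p$ the discrepancy is that $\op{Sel}(A^+[\varpi^n]/\Q_{\infty})$ imposes the trivial condition while $\op{Sel}(A[\varpi^n]/\Q_{\infty})$ requires the image in $\bigoplus_{\eta\mid p}H^1(\op{I}_{\eta}, A[\varpi^n])$ to vanish. The plan is to show that imposing this extra condition on $\iota(\op{Sel}(A^+[\varpi^n]/\Q_{\infty}))$ only removes a $\Lambda_n(\op{G})$-cotorsion piece, by a direct analysis of the restriction map $H^1(L^{\op{cyc}}_{\eta}, A^+[\varpi^n])\rightarrow H^1(\op{I}_{\eta}, A[\varpi^n])$, using that $A^-[\varpi^n]$ is unramified at $p$ (because $\gamma_2$ is unramified) together with standard Iwasawa-theoretic control over local cohomology along the cyclotomic direction. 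Once that is done, the intersection of the image of $\iota$ with $\op{Sel}(A[\varpi^n]/\Q_{\infty})$ still has positive $\Lambda_n(\op{G})$-corank, and Proposition \ref{prop32} yields both $\mu(A/\Q_{\infty})\geq n$ and $t(A/\Q_{\infty})\geq n$.

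\par The main obstacle is precisely the local comparison at $p$: one has to ensure that the obstruction to unramifiedness for the image of $\iota$ lies in a $\Lambda_n(\op{G})$-cotorsion piece of $\bigoplus_{\eta\mid p}H^1(\op{I}_{\eta}, A[\varpi^n])$. The kernel analysis and the away-from-$p$ conditions are essentially formal, but this local-at-$p$ estimate is what requires genuine input from the structure of the ordinary filtration and from the fact that $\Q_{\infty}$ is an admissible pro-$p$ $p$-adic Lie extension.
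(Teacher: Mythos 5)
Your kernel finiteness argument in the second paragraph is exactly the paper's proof: the kernel of $H^1(\Q_S/\Q_{\infty}, A^+[\varpi^n])\to H^1(\Q_S/\Q_{\infty}, A[\varpi^n])$ is a quotient of $H^0(\Q_S/\Q_{\infty}, A^-[\varpi^n])$, which is finite because it sits inside the finite module $A^-[\varpi^n]$; the paper records it as finite rather than merely pseudo-null, but either suffices to see that the image of $\iota$ retains the positive $\Lambda_n(\op{G})$-corank of $\op{Sel}(A^+[\varpi^n]/\Q_{\infty})$.

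The rest of your plan, however, is chasing a phantom difficulty. The local condition at $p$ in the definition of $\mathcal{H}_p(A[\varpi^n]/L^{\op{cyc}})$ is to be read with target $H^1(\op{I}_{\eta}, A^-[\varpi^n])$, mirroring the ordinary condition $\mathcal{H}_p(A/L^{\op{cyc}})$ a few lines earlier whose target is $H^1(\op{I}_{\eta}, A^-)$; this is also what makes the paper's assertion that the inclusion $A^+[\varpi^n]\hookrightarrow A[\varpi^n]$ induces a map of Selmer groups, display $\eqref{modpin}$, literally correct. With that reading there is no local-at-$p$ comparison to carry out: every class coming from $H^1(L^{\op{cyc}}_{\eta}, A^+[\varpi^n])$ dies in $H^1(\op{I}_{\eta}, A^-[\varpi^n])$, simply because the composite $A^+[\varpi^n]\hookrightarrow A[\varpi^n]\twoheadrightarrow A^-[\varpi^n]$ is the zero map. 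Hence $\iota$ lands in $\op{Sel}(A[\varpi^n]/\Q_{\infty})$ for free, and the proof of Proposition~\ref{prop33} is your kernel observation followed by Proposition~\ref{prop32}. The cotorsion estimate on the image in $H^1(\op{I}_{\eta}, A[\varpi^n])$ that you explicitly left open as ``the main obstacle'' is therefore not required, and no genuine input from the ordinary filtration along $\Q_{\infty}$ is needed.
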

\begin{proof}
The short exact sequence of $\op{G}_{\Q,S}$-modules
\[0\rightarrow A^+[\varpi^n]\rightarrow A[\varpi^n]\rightarrow A^-[\varpi^n]\rightarrow 0\] induces a long exact sequence in cohomology. Since $H^0(\Q_S/\Q_{\infty}, A^-[\varpi^n])$ is finite, the kernel of the map $\iota$ from $\eqref{modpin}$ is finite. The result follows from Proposition $\ref{prop32}$.
\end{proof}
The next lemma will be applied to analyzing the $\Lambda_n(\op{G})$-corank of the residual Selmer group $\op{Sel}(A^+[\varpi^n]/\Q_{\infty})$. Let $\op{G}_{\infty}=\op{Gal}(\mathbb{C}/\mathbb{R})$ denote the decomposition group at $\infty$.
\begin{Lemma}\label{globalECLemma}Let $M$ be a $\mathcal{O}[\op{G}_{\Q,S}]$-module which is a cofinitely generated and cofree $\mathcal{O}$-module. Then, we have that
\[\begin{split}&\operatorname{corank}_{\Lambda(\op{G})}H^1(\Q_S/\Q_{\infty}, M)\\=&\operatorname{corank}_{\Lambda(\op{G})}H^2(\Q_S/\Q_{\infty}, M)+\op{corank}_{\mathcal{O}} M- \op{corank}_{\mathcal{O}} H^0(\op{G}_{\infty}, M).\end{split}\]
\end{Lemma}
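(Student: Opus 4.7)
The strategy is to derive the identity by applying Tate's global Euler--Poincar\'e characteristic formula at each finite layer $L_n := \Q_\infty^{G_n}$ of the tower $\Q_\infty/\Q$, where $\{G_n\}_{n \ge 1}$ is a cofinal descending chain of open normal subgroups of $\op{G}$, and then passing to the limit. For each finite quotient $M[\varpi^r]$, Tate's formula at $L_n$ reads
\[
\frac{\#H^0(\Q_S/L_n, M[\varpi^r])\cdot \#H^2(\Q_S/L_n, M[\varpi^r])}{\#H^1(\Q_S/L_n, M[\varpi^r])} \;=\; \prod_{v | \infty \text{ of } L_n} \frac{\#H^0(L_{n,v}, M[\varpi^r])}{\#M[\varpi^r]}.
\]
Taking $\log_p$, dividing by $\log_p \#(\mathcal{O}/\varpi^r)$, and letting $r \to \infty$ converts this into an identity of $\mathcal{O}$-coranks (using that $M$ is $\mathcal{O}$-cofree):
\[
\sum_{i=0}^{2} (-1)^i \op{corank}_{\mathcal{O}} H^i(\Q_S/L_n, M) \;=\; \sum_{v | \infty \text{ of } L_n}\bigl(\op{corank}_{\mathcal{O}} H^0(L_{n,v}, M) - \op{corank}_{\mathcal{O}} M\bigr).
\]

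Since $p$ is odd and $\op{G}$ is pro-$p$, the image of complex conjugation (an element of order $2$) in $\op{G}$ must be trivial, so every archimedean place of $\Q_\infty$ is real. The number of archimedean places of $L_n$ is therefore exactly $[L_n:\Q] = [\op{G}:G_n]$, and each decomposition group is canonically identified with $\op{G}_\infty$. Consequently the right-hand side simplifies to $[\op{G}:G_n]\bigl(\op{corank}_{\mathcal{O}} H^0(\op{G}_\infty, M) - \op{corank}_{\mathcal{O}} M\bigr)$. Dividing by $[\op{G}:G_n]$ and letting $n \to \infty$, I invoke the standard limit formula
\[
\op{corank}_{\Lambda(\op{G})} N \;=\; \lim_{n \to \infty} \frac{\op{corank}_{\mathcal{O}} N^{G_n}}{[\op{G}:G_n]},
\]
valid for any discrete cofinitely generated $\Lambda(\op{G})$-module $N$, in conjunction with the Hochschild--Serre identification $H^i(\Q_S/L_n, M) \simeq H^i(\Q_S/\Q_\infty, M)^{G_n}$ up to pseudonull (hence growth-irrelevant) corrections. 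This yields
\[
\op{corank}_{\Lambda(\op{G})} H^0 - \op{corank}_{\Lambda(\op{G})} H^1 + \op{corank}_{\Lambda(\op{G})} H^2 \;=\; \op{corank}_{\mathcal{O}} H^0(\op{G}_\infty, M) - \op{corank}_{\mathcal{O}} M,
\]
where each $H^i$ is now taken over $\Q_S/\Q_\infty$.

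Finally, $H^0(\Q_S/\Q_\infty, M)$ embeds into $M$ and is therefore $\mathcal{O}$-cofinitely generated, so its $\Lambda(\op{G})$-corank vanishes. Rearranging the identity above produces the stated formula. The main technical obstacle is the limit step: one must verify that $\op{corank}_{\mathcal{O}} H^i(\Q_S/L_n, M)$ grows as $\op{corank}_{\Lambda(\op{G})} H^i(\Q_S/\Q_\infty, M)\cdot [\op{G}:G_n] + o([\op{G}:G_n])$ for each $i \in \{0,1,2\}$. This reduces to showing that every $H^i(\Q_S/\Q_\infty, M)$ is cofinitely generated over $\Lambda(\op{G})$, which is immediate for $H^0$, and follows for $H^1$ and $H^2$ from known results on Galois cohomology over admissible pro-$p$, $p$-adic Lie extensions (in the spirit of Ochi--Venjakob and Howson), exploiting the finite cohomological dimension of $\op{G}$.
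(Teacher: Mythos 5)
Your proposal reaches the correct formula but takes a genuinely different route from the paper. The paper's proof works entirely ``at the top'' of the tower: it invokes \cite[Theorem 1.1]{HowsonEC}, which identifies the $\Lambda(\op{G})$-corank of a discrete cofinitely generated $\Lambda(\op{G})$-module $N$ with the alternating sum $\sum_{j}(-1)^j \op{corank}_{\mathcal{O}} H^j(\op{G}, N)$, then reassembles the double sum $\sum_{i,j}(-1)^{i+j}\op{corank}_{\mathcal{O}} H^j(\op{G}, H^i(\Q_S/\Q_{\infty},M))$ via the Hochschild--Serre spectral sequence into $\sum_n (-1)^n \op{corank}_{\mathcal{O}} H^n(\op{G}_{\Q,S}, M)$, and finally applies Tate's global Euler--Poincar\'e formula exactly once, over $\Q$. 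No finite layers, no limits. You instead descend through the finite subextensions $L_n$, apply Tate's formula at each level, observe (correctly, and this is a point the paper's argument does not need) that $\Q_{\infty}$ is totally real because $\op{G}$ is pro-$p$ with $p$ odd, and recover the $\Lambda(\op{G})$-coranks by a Harris-type asymptotic limit $\op{corank}_{\Lambda(\op{G})} N = \lim_n [\op{G}:G_n]^{-1}\op{corank}_{\mathcal{O}} N^{G_n}$. Both arguments are legitimate; yours is closer in spirit to classical layer-by-layer Iwasawa theory, while the paper's is shorter and rests on a single clean Euler-characteristic identity for the Iwasawa algebra.

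One step in your outline is phrased incorrectly and should be fixed. You write that $H^i(\Q_S/L_n, M) \simeq H^i(\Q_S/\Q_{\infty}, M)^{G_n}$ ``up to pseudonull (hence growth-irrelevant) corrections.'' The discrepancy here is not measured by pseudonull $\Lambda(\op{G})$-modules at all: the Hochschild--Serre spectral sequence $E_2^{p,q} = H^p(G_n, H^q(\Q_S/\Q_{\infty}, M)) \Rightarrow H^{p+q}(\Q_S/L_n, M)$ produces, for each fixed $n$, finitely generated $\mathcal{O}$-modules $H^p(G_n, H^q)$ with $p>0$ together with differentials out of $E_2^{0,i}$, and none of these carry a $\Lambda(\op{G})$-structure in any useful way, so ``pseudonull'' does not apply to them. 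What your limit argument actually requires is the quantitative statement that $\op{corank}_{\mathcal{O}} H^p(G_n, N)$ grows as $o([\op{G}:G_n])$ for every $p>0$ whenever $N$ is cofinitely generated over $\Lambda(\op{G})$ (with error typically $O([\op{G}:G_n]^{1-1/d})$ for $d = \dim\op{G}$); this is the genuine content of the Harris--Howson--Venjakob asymptotic theory and is precisely the bound that makes the $E_2^{p,q}$ terms with $p>0$ and the images of differentials negligible after dividing by $[\op{G}:G_n]$. With the terminology replaced by the correct sublinear-growth estimate and that estimate explicitly cited, your argument is complete and correct.
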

\begin{proof}
We have that
\[\begin{split}&\sum_{i\geq 0}(-1)^i \operatorname{corank}_{\Lambda(\op{G})}\left(H^{i}(\Q_S/\Q_{\infty},M)\right)\\=&\sum_{i,j\geq 0}(-1)^{i+j}\op{corank}_{\mathcal{O}}\left( H^j(\op{G}, H^{i}(\Q_S/\Q_{\infty},M))\right)\\=&\sum_{i\geq 0}(-1)^{i+1}\op{corank}_{\mathcal{O}}\left( H^{i}(\op{G}_{\Q,S},M)\right)
\\=&\op{corank}_{\mathcal{O}} M- \op{corank}_{\mathcal{O}} H^0(\op{G}_{\infty}, M).\end{split}\]
The first equality follows from \cite[Theorem 1.1]{HowsonEC}, the second from the inflation-restriction sequence (also known as the Hochschild-Serre spectral sequence) and the final equality follows from the global Euler-characteristic formula (see \cite[8.7.4]{NSW}) applied to each of the submodules $M[\varpi^m]$ as $m$ ranges over positive integers.
\end{proof}

\begin{Lemma}\label{globalECLemma2}Let $\kappa_n:\op{G}_{\Q,S}\rightarrow \op{GL}_1(\mathcal{O}_{/n})$ be a liftable odd continuous character and $n\geq 1$. Then, we have that:
\[\operatorname{corank}_{\Lambda_n(\op{G})}H^1(\Q_S/\Q_{\infty}, \On (\kappa_n))\geq 1.\]
\end{Lemma}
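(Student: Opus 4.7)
The plan is to use liftability to produce a characteristic-zero cofree overlift of $\On(\kappa_n)$, apply Lemma~$\ref{globalECLemma}$ to it, and then descend the resulting corank bound back to $\On(\kappa_n)$ via the Kummer sequence.

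First, I would write $\kappa_n = \chi_n^i \tilde{\alpha}_n$ and lift it to $\kappa := \chi^i \tilde{\alpha}: \op{G}_{\Q,S} \to \op{GL}_1(\mathcal{O})$. Setting $T := \mathcal{O}(\kappa)$ and $A := T \otimes_{\mathcal{O}} K/\mathcal{O}$, the module $A$ is $\mathcal{O}$-cofree of corank one and satisfies $A[\varpi^n] \simeq \On(\kappa_n)$. Since $\kappa(c)^2 = 1$ in $\mathcal{O}$ and $\kappa(c) \equiv -1 \pmod{\varpi^n}$, the only possibility is $\kappa(c) = -1$, so $A^{c=1}$ is killed by $2$ and hence vanishes because $p$ is odd. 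Thus $H^0(\op{G}_{\infty}, A) = 0$.

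Next, Lemma~$\ref{globalECLemma}$ applied to $A$ yields
\[
\op{corank}_{\Lambda(\op{G})} H^1(\Q_S/\Q_{\infty}, A) \geq \op{corank}_{\mathcal{O}} A - \op{corank}_{\mathcal{O}} H^0(\op{G}_{\infty}, A) = 1.
\]
To transport this to $\On(\kappa_n)$-coefficients, I would use the Kummer sequence $0 \to A[\varpi^n] \to A \xrightarrow{\varpi^n} A \to 0$, which induces
\[
0 \to H^0(\Q_S/\Q_{\infty}, A)/\varpi^n \to H^1(\Q_S/\Q_{\infty}, A[\varpi^n]) \to H^1(\Q_S/\Q_{\infty}, A)[\varpi^n] \to 0.
\]
After Pontryagin dualization, $H^1(\Q_S/\Q_{\infty}, A)^{\vee}/\varpi^n$ embeds in $H^1(\Q_S/\Q_{\infty}, A[\varpi^n])^{\vee}$ with finite cokernel, because $H^0(\Q_S/\Q_{\infty}, A)/\varpi^n$ is finite in either subcase (if $\kappa$ is nontrivial on $\op{Gal}(\Q_S/\Q_{\infty})$, then $H^0$ itself is finite; if $\kappa$ is trivial there, then $H^0 = A$ and $A/\varpi^n = 0$ since $\varpi^n$ acts surjectively on $K/\mathcal{O}$). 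Combining with the standard structural inequality $\op{rank}_{\Lambda_n(\op{G})}(N/\varpi^n N) \geq \op{rank}_{\Lambda(\op{G})}(N)$ for any finitely generated $\Lambda(\op{G})$-module $N$ (which falls out of the pseudo-isomorphism classification upon splitting off the torsion submodule), one concludes
\[
\op{corank}_{\Lambda_n(\op{G})} H^1(\Q_S/\Q_{\infty}, \On(\kappa_n)) \geq \op{corank}_{\Lambda(\op{G})} H^1(\Q_S/\Q_{\infty}, A) \geq 1.
\]

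The principal obstacle is that Lemma~$\ref{globalECLemma}$ is formulated only for $\mathcal{O}$-cofree coefficients, so it cannot be applied directly to the torsion module $\On(\kappa_n)$; the liftability hypothesis is precisely what supplies the cofree overlift $A$, and is also what guarantees that oddness of $\kappa_n$ propagates to $\kappa(c) = -1$ in characteristic zero. Once $A$ is in hand, eliminating $H^0(\op{G}_{\infty}, A)$ via the oddness of $\kappa$ and carrying out the rank comparison under reduction modulo $\varpi^n$ are both routine.
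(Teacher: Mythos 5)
Your proposal is correct and follows essentially the same route as the paper's proof: use liftability to pass from $\On(\kappa_n)$ to the $\mathcal{O}$-cofree module $K/\mathcal{O}(\kappa)$, apply Lemma~$\ref{globalECLemma}$ there (using oddness to kill $H^0(\op{G}_{\infty},\cdot)$), and descend the corank bound via the Kummer sequence. Your write-up fills in details the paper leaves implicit, namely the verification that $\kappa(c)=-1$ in characteristic zero and the rank-comparison under reduction modulo $\varpi^n$, but the core argument is the same.
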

\begin{proof}
It follows from the assumption on $\kappa_n$ that it lifts to a character \[\kappa: \op{G}_{\Q,S}\rightarrow \op{GL}_1(\mathcal{O}),\] where $S$ is a finite set of prime numbers. There is a short exact sequence of $\op{G}_{\Q,S}$-modules 
\[0\rightarrow \On(\kappa_n)\rightarrow K/\mathcal{O} (\kappa)\xrightarrow{\varpi^n} K/\mathcal{O}(\kappa)\rightarrow 0 \] induces a surjective map 
\[H^1(\Q_S/\Q_{\infty}, \On (\kappa_n))\rightarrow H^1(\Q_S/\Q_{\infty}, K/\mathcal{O} (\kappa))[\varpi^n].\] The result follows from Lemma $\ref{globalECLemma}$, which asserts that:
\[\op{corank}_{\Lambda(\op{G})}H^1(\Q_S/\Q_{\infty}, K/\mathcal{O} (\kappa))\geq 1.\]
\end{proof}
\begin{Lemma}\label{lemma36}
Let $v\in S\backslash \{p\}$ and $M$ an $\F[\op{G}_v]$-module which is finite dimensional $\F$-vector space. Let $\eta|v$ be a prime of $\Q_{\infty}$ and denote by $\op{G}_{\eta}$ the absolute Galois group of $\Q_{\infty, \eta}$. Then, we have that 
\[\op{corank}_{\F[[\op{G}_{\eta}]]} H^1(\op{G}_{\eta}, M)=0.\]
\end{Lemma}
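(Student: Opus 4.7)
The plan is to show that $H^1(\op{G}_\eta, M)$ is in fact finite-dimensional as an $\F$-vector space; once this is established, the stated corank vanishes because $H^1(\op{G}_\eta, M)$ acquires the structure of a discrete module over the Iwasawa algebra $\F[[D]]$ of the decomposition group $D := \op{Gal}(\Q_{\infty, \eta}/\Q_v) \subset \op{G}$ (via the Hochschild--Serre action of $D = \op{Gal}(\bar{\Q}_v/\Q_v)/\op{G}_\eta$), and, by admissibility of $\Q_\infty$, $D$ contains the nontrivial image of $\op{Frob}_v$ in $\op{Gal}(\Q^{\op{cyc}}/\Q)\simeq \Z_p$, hence is infinite. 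A finite-dimensional $\F$-module over the infinite-dimensional ring $\F[[D]]$ is necessarily torsion, giving the desired corank zero.

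The heart of the argument is a uniform bound on the $D'$-invariants of $H^1(\op{G}_\eta, M)$ as $D'$ ranges over open subgroups of $D$. For such a $D'$, set $L := (\Q_{\infty, \eta})^{D'}$, a finite extension of $\Q_v$. The inflation--restriction sequence associated to $1 \to \op{G}_\eta \to \op{Gal}(\bar{\Q}_v/L) \to D' \to 1$ reads
\[
H^1(L, M) \longrightarrow H^1(\op{G}_\eta, M)^{D'} \longrightarrow H^2(D', M^{\op{G}_\eta}),
\]
and both outer terms admit bounds uniform in $D'$. First, since $v \neq p$ and $|M|$ is a $p$-power, Tate's local Euler--Poincar\'e formula combined with local duality gives $|H^1(L, M)| = |H^0(L, M)|\cdot |H^0(L, M^*)| \leq |M|^2$, so $\dim_\F H^1(L, M) \leq 2 \dim_\F M$ independently of $L$. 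Second, still because $v \neq p$, the maximal pro-$p$ quotient of $\op{Gal}(\bar{\Q}_v/\Q_v)$ has $p$-adic Lie dimension at most $2$, so every open subgroup $D' \leq D$ has Lie dimension $\leq 2$; Lazard's theorem then bounds $\dim_\F H^i(D', M^{\op{G}_\eta})$ by a constant multiple of $\dim_\F M$ for $i = 1, 2$, uniformly in $D'$.

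Combining these yields $\dim_\F H^1(\op{G}_\eta, M)^{D'} \leq C \dim_\F M$ for a constant $C$ independent of $D'$. Since $H^1(\op{G}_\eta, M) = \varinjlim_L H^1(L, M)$ and every continuous cohomology class factors through some finite subextension (hence is fixed by the corresponding open subgroup of $D$), we have $H^1(\op{G}_\eta, M) = \bigcup_{D'} H^1(\op{G}_\eta, M)^{D'}$, forcing $\dim_\F H^1(\op{G}_\eta, M) \leq C \dim_\F M < \infty$, and the corank vanishing then follows from the initial reduction. The main obstacle is securing the uniform bound on $H^2(D', M^{\op{G}_\eta})$: this rests crucially on $v \neq p$, as this is precisely the hypothesis under which the pro-$p$ quotient of $\op{Gal}(\bar{\Q}_v/\Q_v)$ has bounded $p$-adic Lie dimension, so that Lazard-type cohomological bounds apply uniformly across all open subgroups.
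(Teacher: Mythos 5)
Your proof is correct, but it takes a genuinely different route from the paper, and it is somewhat more elaborate than necessary. The paper's argument mirrors Lemma~\ref{globalECLemma}: it invokes Howson's local Euler characteristic machinery (\cite[Proposition 1.6]{HowsonEC}) to express the alternating sum of $\F[[\op{G}_{\eta}]]$-coranks of $H^i(\op{G}_\eta, M)$ as the alternating sum of $\F$-dimensions of $H^i(\op{G}_v, M)$ via the Hochschild--Serre spectral sequence, which vanishes for $v\neq p$ by Tate's local Euler characteristic formula; combined with $H^{i}(\op{G}_\eta, M)=0$ for $i\geq 2$ (cohomological dimension $\leq 1$) and the finiteness of $H^0$, this forces the corank of $H^1$ to be zero. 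Your approach instead proves the \emph{stronger} statement that $H^1(\op{G}_\eta, M)$ is actually finite as an $\F$-vector space, by exhibiting it as a filtered union of subspaces of uniformly bounded dimension, and then observing that a finite module over $\F[[D]]$ with $D$ infinite is torsion.

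One observation worth making: the detour through the $D'$-invariants and Lazard's theorem is avoidable. You already know that $H^1(\op{G}_\eta, M)=\varinjlim_L H^1(L_\eta, M)$ is a filtered union of the images of the restriction maps from finite levels, and each such image has $\F$-dimension at most $\dim_\F H^1(L_\eta, M)=\dim_\F H^0(L_\eta, M)+\dim_\F H^2(L_\eta, M)\leq 2\dim_\F M$, which already gives the uniform bound directly, without inflation--restriction or any bound on $H^2(D', M^{\op{G}_\eta})$. If one does want to run the argument through the $D'$-invariants, one should be a bit careful that Lazard's theorem as usually stated gives uniform bounds on $H^i(D',\F_p)$ only for \emph{uniform} pro-$p$ groups $D'$; this is harmless because uniform open subgroups are cofinal in $D$ and the $D'$-invariants form an increasing filtration, but the point deserves to be made explicit.
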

\begin{proof}
Note that since $\op{G}_{\eta}$ has $p$-cohomological dimension $\leq 1$, it follows that $H^i(\Q_{\infty,\eta}, M))=0$ for $i\geq 2$. The proof is similar to Lemma $\ref{globalECLemma}$ and the argument is a direct application of \cite[Proposition 1.6]{HowsonEC} and the local Euler characteristic formula.
\end{proof}

\begin{Lemma}\label{lemma37}
Assume that $\rho_{T/n}$ is aligned. For $v\in S$, the group $\mathcal{H}_v(A^+[\varpi^n]/\Q_{\infty})^{\vee}$ is a pseudonull $\Lambda(\op{G})$-module.
\end{Lemma}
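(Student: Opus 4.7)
The plan splits into two cases according to whether $v = p$ or $v \in S \setminus \{p\}$. The case $v = p$ is immediate: by the definition recalled just before the lemma, $\mathcal{H}_p(A^+[\varpi^n]/\Q_\infty) = 0$, so its Pontryagin dual is trivially pseudonull over $\Lambda(\op{G})$. For $v \ne p$, I would first use the fact that $\mathcal{H}_v(A^+[\varpi^n]/\Q_\infty)$ is built as a direct limit of direct sums over the primes $\eta$ of $\Q_\infty$ above $v$, with $\op{G}$ permuting these summands transitively. This presents $\mathcal{H}_v$, as a $\Lambda(\op{G})$-module, as an induced module from the decomposition subgroup $H_\eta \subset \op{G}$ acting on the local factor at a single $\eta$. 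The local factor is a quotient of $H^1(\op{G}_\eta, A^+[\varpi^n])$, where $\op{G}_\eta$ is the absolute Galois group of $\Q_{\infty,\eta}$.

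To control the local factor, I would filter $A^+[\varpi^n]$ by the submodules $\varpi^i A^+[\varpi^n]$ for $0 \le i \le n$, noting that each successive quotient is isomorphic to $A^+[\varpi] \simeq \F(\varphi_1)$, a one-dimensional $\F$-vector space carrying a continuous $\op{G}_\Q$-action (here the alignment hypothesis on $\rho_{T/n}$ is used to ensure $A^+[\varpi^n]$ really is $\op{G}_\Q$-stable with this filtration). Applying Lemma $\ref{lemma36}$ to each successive quotient and chasing the resulting long exact sequences in cohomology, one concludes that $H^1(\op{G}_\eta, A^+[\varpi^n])$ has corank zero over the local Iwasawa algebra $\On[[H_\eta]]$; the same bound then passes to the quotient defining the local factor of $\mathcal{H}_v$.

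The main obstacle is the passage from this local corank-zero statement to full pseudonullity over $\Lambda(\op{G})$. This relies on the admissibility hypothesis on $\Q_\infty/\Q$: for $v \ne p$ the decomposition subgroup $H_\eta$ has strictly smaller Lie-algebra dimension than $\op{G}$ (with sufficient codimension to force pseudonullity in the relevant admissible setting), so an induced module from a corank-zero $\Lambda(H_\eta)$-module acquires extra codimension and ends up pseudonull over $\Lambda(\op{G})$. Making this codimension bookkeeping precise -- combining the vanishing of the local corank with the codimension of $H_\eta$ in $\op{G}$ -- is the most delicate step, and is where the structural conventions of the $p$-adic Lie extension fixed at the start of the paper must be used explicitly.
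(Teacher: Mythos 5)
Your outline tracks the paper's argument quite closely: treat $v=p$ by the vanishing of the local condition, pass to the ambient product of local $H^1$'s, reduce by d\'evissage to the $n=1$ case $A^+[\varpi]\simeq\F(\varphi_1)$, invoke Lemma~\ref{lemma36} for the local factor, and recognize $\widetilde{\mathcal H}_v$ as an induced module from the decomposition subgroup. All of that is sound and matches the paper.

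The issue is in your final paragraph. You attribute the jump from ``local factor has corank zero over $\Omega(\op{G}_\eta)$'' to ``$\Lambda(\op{G})$-pseudonullity'' to the decomposition subgroup $\op{G}_\eta$ having strictly smaller Lie-algebra dimension than $\op{G}$, and ask for ``sufficient codimension'' of $\op{G}_\eta$ in $\op{G}$. This is not true in general, and in fact fails in the most basic case the paper covers: when $\Q_\infty=\Q^{\op{cyc}}$ one has $\op{G}=\Gamma\simeq\Z_p$ and the decomposition group at $v\neq p$ is an open subgroup of $\Gamma$, hence of the \emph{same} dimension $1$. No codimension of $\op{G}_\eta$ is involved. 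The correct mechanism is different and does not depend on the codimension of $\op{G}_\eta$ at all: once you know the local factor is $\Omega(\op{G}_\eta)$-torsion, the induced module $\Omega(\op{G})\otimes_{\Omega(\op{G}_\eta)}(\cdot)$ remains $\Omega(\op{G})$-torsion (using that $\Omega(\op{G})$ is a flat, and indeed free, $\Omega(\op{G}_\eta)$-module, together with injectivity of $\Omega(\op{G}_\eta)\hookrightarrow\Omega(\op{G})$). A finitely generated $\Lambda(\op{G})$-module that is killed by $\varpi$ and is $\Omega(\op{G})$-torsion then automatically has codimension $\geq 2$ over $\Lambda(\op{G})$, i.e.\ it is pseudonull: one codimension comes from being supported on $V(\varpi)$, and the other from being torsion over $\Omega(\op{G})=\Lambda(\op{G})/\varpi$. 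If you replace your codimension argument about $\op{G}_\eta$ with this two-step codimension count on the module itself, the proof closes correctly and agrees with the paper's.
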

\begin{proof}
Recall that $\mathcal{H}_p(A^+[\varpi^n]/\Q_{\infty})=0$. Therefore, assume without loss of generality that $v\neq p$. Note that $\mathcal{H}_v(A^+[\varpi^n]/\Q_{\infty})$ is a quotient of the product 
\[\widetilde{\mathcal{H}}_v(A^+[\varpi^n]/\Q_{\infty}):=\prod_{w|v} H^1(\Q_{\infty, w}, A^+[\varpi]),\] which is easier to analyze. Note that submodules of pseudonull modules are pseudonull. Therefore, it suffices to show that $\widetilde{\mathcal{H}}_v(A^+[\varpi^n]/\Q_{\infty})$ is a copseudonull $\Lambda(\op{G})$-module. The short exact sequence \[0\rightarrow A^+[\varpi^{n-1}]\rightarrow A^+[\varpi^n] \rightarrow A^+[\varpi]\rightarrow 0\]induces a three term exact sequence
\[ \widetilde{\mathcal{H}_v}(A^+[\varpi^{n-1}]/\Q_{\infty})\rightarrow \widetilde{\mathcal{H}_v}(A^+[\varpi^n]/\Q_{\infty})\rightarrow \widetilde{\mathcal{H}}_v(A^+[\varpi]/\Q_{\infty}).\]It suffices to show that the two flanking groups are copseudonull. Therefore, the d\'evissage argument shows that it suffices to prove the assertion for $n=1$. Recall that $\Omega(\op{G}):=\F[[\op{G}]]$. It suffices to show that $\widetilde{\mathcal{H}}_v(A^+[\varpi]/\Q_{\infty})$ is cotorsion as an $\Omega(\op{G})$-module. Pick a prime $\eta|v$ of $\Q_{\infty}$.   Note that $A^+[\varpi]\simeq \F(\varphi_1)$ and that there are isomorphisms
\[ \begin{split}
& \widetilde{\mathcal{H}}_v(A^+[\varpi]/\Q_{\infty})\\
\simeq &\op{Ind}_{\op{G}_{\eta}}^{\op{G}}\left( H^1(\Q_{\infty,\eta}, \F(\varphi_1))\right) \\ \simeq & \F[[\op{G}]]\otimes_{\F[[\op{G}_{\eta}]]} H^1(\Q_{\infty,\eta}, \F(\varphi_1)).\end{split}\] In the above formula, $\op{G}_{\eta}:=\op{Gal}(\Q_{\infty,\eta}/\Q_v)$, is identified with the decomposition group of $\eta$. The assertion follows from Lemma $\ref{lemma36}$, which states that $H^1(\Q_{\infty,\eta}, \F(\varphi_1))$ is a cotorsion $\F[[\op{G}_{\eta}]]$-module.
\end{proof}

\begin{Corollary}\label{corollory38}
Assume that $\rho_{T/n}$ is aligned and $\varphi_{1,n}$ is liftable, then \[\op{corank}_{\Lambda_n(\op{G})}\op{Sel}(A^+[\varpi^n]/\Q_{\infty})\geq 1.\]
\end{Corollary}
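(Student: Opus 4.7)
The plan is to combine the two preceding lemmas, Lemma \ref{globalECLemma2} and Lemma \ref{lemma37}, through the defining exact sequence of the Selmer group. The alignment hypothesis on $\rho_{T/n}$ furnishes the identification $A^+[\varpi^n]\simeq \On(\varphi_{1,n})$, so that the ambient cohomology in the Selmer exact sequence can be analyzed by Lemma \ref{globalECLemma2}, while the local terms can be dealt with by Lemma \ref{lemma37}. The liftability of $\varphi_{1,n}$ is exactly what is needed to invoke Lemma \ref{globalECLemma2}.

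First I would verify that $\varphi_{1,n}$ meets the hypotheses of Lemma \ref{globalECLemma2}: it is liftable by assumption, and since $\varphi_1$ is odd and $p$ is odd (so that $\pm 1$ are the only square roots of unity in $\On$), the value $\varphi_{1,n}(c)$, which satisfies $\varphi_{1,n}(c)^2=1$ and reduces to $-1$ modulo $\varpi$, must itself equal $-1$. Hence $\varphi_{1,n}$ is odd and Lemma \ref{globalECLemma2} applies, giving
\[\op{corank}_{\Lambda_n(\op{G})} H^1\bigl(\Q_S/\Q_{\infty}, A^+[\varpi^n]\bigr)\geq 1.\]

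Next, from the very definition of the Selmer group I have the left exact sequence
\[0\rightarrow \op{Sel}(A^+[\varpi^n]/\Q_{\infty})\rightarrow H^1(\Q_S/\Q_{\infty}, A^+[\varpi^n])\rightarrow \bigoplus_{v\in S}\mathcal{H}_v(A^+[\varpi^n]/\Q_{\infty}).\]
Pontryagin dualizing produces a right exact sequence of finitely generated $\Lambda_n(\op{G})$-modules in which the dual of the image of the localization map sits on the left. By Lemma \ref{lemma37}, each $\mathcal{H}_v(A^+[\varpi^n]/\Q_{\infty})^{\vee}$, and hence this image-dual, is pseudonull over $\Lambda(\op{G})$; being annihilated by $\varpi^n$, it therefore has $\Lambda_n(\op{G})$-rank zero. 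Additivity of rank across the resulting short exact sequence then yields
\[\op{corank}_{\Lambda_n(\op{G})}\op{Sel}(A^+[\varpi^n]/\Q_{\infty})\geq \op{corank}_{\Lambda_n(\op{G})} H^1(\Q_S/\Q_{\infty}, A^+[\varpi^n])\geq 1,\]
which is the desired bound.

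The only place that requires a moment of care is the descent from pseudonullity over $\Lambda(\op{G})$ to vanishing of the $\Lambda_n(\op{G})$-rank. This is a standard consequence of the dimension theory of Auslander regular rings: a finitely generated $\Lambda_n(\op{G})$-module that is pseudonull (i.e.\ has codimension at least $2$) over $\Lambda(\op{G})$ has codimension at least $1$ over the quotient ring $\Lambda_n(\op{G})$, and so is $\Lambda_n(\op{G})$-torsion. No further obstacles arise, since the bulk of the work has already been done in Lemmas \ref{globalECLemma2} and \ref{lemma37}.
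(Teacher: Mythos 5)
Your proof is correct and follows essentially the same approach as the paper: invoke Lemma~\ref{globalECLemma2} for the lower bound on the $\Lambda_n(\op{G})$-corank of $H^1(\Q_S/\Q_{\infty}, A^+[\varpi^n])$, invoke Lemma~\ref{lemma37} to kill the local terms, and conclude from the defining exact sequence of the Selmer group. You supply some useful bookkeeping the paper leaves implicit---in particular the check that $\varphi_{1,n}$ is odd (so that Lemma~\ref{globalECLemma2} applies) and the dimension-theoretic descent from $\Lambda(\op{G})$-pseudonullity to $\Lambda_n(\op{G})$-torsion---but these are exactly the steps the paper's one-line proof compresses, not a different route.
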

\begin{proof}
 Lemma $\ref{globalECLemma2}$ asserts that $H^1(\Q_S/\Q_{\infty}, A^+[\varpi^n])$ has $\Lambda_n(\op{G})$-corank $\geq 1$ and Lemma $\ref{lemma37}$ asserts that $\mathcal{H}_v(A^+[\varpi^n]/\Q_{\infty})^{\vee}$ is a pseudonull $\Lambda(\op{G})$-module, and hence is torsion as a $\Lambda_n(\op{G})$-module. Therefore, the Selmer group $\op{Sel}(A^+[\varpi^n]/\Q_{\infty})$ has $\Lambda_n(\op{G})$-corank $\geq 1$.
\end{proof}
\begin{proof}[Proof of Theorem $\ref{theorem1}$]
\par Since $\bar{\rho}$ is aligned, $\varphi_1$ is odd, and hence so is $\varphi_{1,n}$. Note that $A^+[\varpi^n]$ is identified with  $\mathcal{O}(\varphi_{1,n})$. Corollory $\ref{corollory38}$ asserts that the Selmer group $\op{Sel}(A^+[\varpi^n]/\Q_{\infty})$ has $\Lambda_n(\op{G})$-corank $\geq 1$. The result follows from Proposition $\ref{prop33}$.

\end{proof}

\section{On the {\large$\mu$}-multiplicity}\label{section 4}
In this section, Theorem $\ref{theorem2}$ is proved.
\begin{Lemma}\label{mumultbasiclemma}
Suppose that the $\Omega(\op{G})$-corank of $\op{Sel}(A[\varpi]/\Q_{\infty})$ is $r$. Then the $\mu$-multiplicity $\rm{r}(A/\Q_{\infty})$ is equal to $r$.
\end{Lemma}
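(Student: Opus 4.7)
The plan is to compare three $\Omega(\op{G})$-modules: $\rm{M}/\varpi$ where $\rm{M} := \op{Sel}(A/\Q_{\infty})^{\vee}$, the Pontryagin dual of $\op{Sel}(A/\Q_{\infty})[\varpi]$, and the Pontryagin dual of $\op{Sel}(A[\varpi]/\Q_{\infty})$, and to show all three have the same $\Omega(\op{G})$-rank.

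First I would apply the structure formula \eqref{strtheory} (proved already in the course of Proposition \ref{prop32}) with $n=1$: this yields
\[
\rm{M}/\varpi\;\simeq\;\bigoplus_{i=1}^{t}\bigl(\Lambda(\op{G})/\varpi\bigr)^{\mu_i}\;=\;\Omega(\op{G})^{\mu_1+\cdots+\mu_t}\pmod{\mathcal{PN}},
\]
so the $\Omega(\op{G})$-rank of $\rm{M}/\varpi$ is exactly $\op{r}(A/\Q_{\infty})$. Standard Pontryagin duality gives a natural isomorphism $\op{Sel}(A/\Q_{\infty})[\varpi]^{\vee}\simeq \rm{M}/\varpi\rm{M}$, so the $\Omega(\op{G})$-corank of $\op{Sel}(A/\Q_{\infty})[\varpi]$ is $\op{r}(A/\Q_{\infty})$ as well.

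Next, I would consider the natural map induced by the Kummer sequence $0\to A[\varpi]\to A\xrightarrow{\varpi} A\to 0$:
\[
\phi\colon \op{Sel}(A[\varpi]/\Q_{\infty})\longrightarrow \op{Sel}(A/\Q_{\infty})[\varpi],
\]
entirely analogous to \eqref{finitekernel}. The long exact sequence in Galois cohomology places $\ker\phi$ inside $A(\Q_{\infty})/\varpi$, which is finite because $A(\Q_{\infty})$ is a cofinitely generated $\mathcal{O}$-module. Thus $\ker\phi$ is pseudonull, and in particular $\Omega(\op{G})$-cotorsion. For the cokernel, I would invoke the snake lemma applied to the defining diagrams of $\op{Sel}(A[\varpi]/\Q_{\infty})$ and $\op{Sel}(A/\Q_{\infty})[\varpi]$: this shows $\op{coker}\phi$ sits inside a direct sum over $v\in S$ of local kernels $\ker\bigl(\mathcal{H}_v(A[\varpi]/\Q_{\infty})\to \mathcal{H}_v(A/\Q_{\infty})[\varpi]\bigr)$.

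The main obstacle is bounding these local kernels, but this is handled by exactly the same induction and local Euler characteristic argument that proves Lemma \ref{lemma37}: for $v\neq p$, the group $\mathcal{H}_v(A[\varpi]/\Q_{\infty})$ is a quotient of $\prod_{\eta\mid v}H^1(\Q_{\infty,\eta},A[\varpi])$, which is $\op{Ind}_{\op{G}_{\eta}}^{\op{G}}H^1(\Q_{\infty,\eta},A[\varpi])$, and Lemma \ref{lemma36} applied to $M=A[\varpi]$ forces this to be copseudonull over $\Lambda(\op{G})$; for $v=p$, the local condition is a quotient of $H^1(\op{I}_{\eta},A[\varpi])$ for $\eta\mid p$, and the analogous copseudonullity follows by the same induction argument. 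Hence $\op{coker}\phi$ is $\Omega(\op{G})$-cotorsion, and combining with the previous paragraph gives
\[
\op{corank}_{\Omega(\op{G})}\op{Sel}(A[\varpi]/\Q_{\infty})\;=\;\op{corank}_{\Omega(\op{G})}\op{Sel}(A/\Q_{\infty})[\varpi]\;=\;\op{r}(A/\Q_{\infty}),
\]
which is the desired equality.
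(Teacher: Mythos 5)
Your overall strategy coincides with the paper's: identify $\op{r}(A/\Q_{\infty})$ with $\op{corank}_{\Omega(\op{G})}\op{Sel}(A/\Q_{\infty})[\varpi]$ via the structure formula, then show the Kummer map $\phi$ is a pseudo-isomorphism. The parts concerning the kernel of $\phi$, the identification of $\op{r}$ with a corank, and the treatment of the primes $v\neq p$ via the copseudonullity of $\mathcal{H}_v(A[\varpi]/\Q_{\infty})$ (Lemma \ref{lemma36} and the d\'evissage of Lemma \ref{lemma37}) are all sound.

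However, there is a genuine gap at $v=p$. You assert that $\mathcal{H}_p(A[\varpi]/\Q_{\infty})$ is copseudonull over $\Lambda(\op{G})$ ``by the same induction argument,'' but this is false. The argument of Lemmas \ref{lemma36} and \ref{lemma37} relies on the vanishing of the $\Lambda$-rank of local $H^1$ at primes above $v\neq p$, which in turn comes from the local Euler characteristic being zero away from $p$. At $p$ the decomposition group has finite index in $\op{G}$ and the local Euler characteristic is positive, so $H^1(\Q_{\infty,\eta},A[\varpi])$ for $\eta\mid p$, and hence $\mathcal{H}_p(A[\varpi]/\Q_{\infty})$, generally has positive $\Omega(\op{G})$-corank; it is precisely the nontrivial local condition that makes the Selmer group a genuine refinement of $H^1$. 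What you actually need is not copseudonullity of $\mathcal{H}_p(A[\varpi]/\Q_{\infty})$ but rather $\Omega(\op{G})$-cotorsionness of the kernel of the map $\gamma_p\colon\mathcal{H}_p(A[\varpi]/\Q_{\infty})\to\mathcal{H}_p(A/\Q_{\infty})[\varpi]$. The paper handles this by observing that $\gamma_p$ is in fact injective, citing the argument of \cite[Proposition 2.8]{greenbergvatsal}; this is a separate local input which your proof omits. Without it, the cokernel bound on $\phi$ is not established and the conclusion does not follow.
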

\begin{proof}
It follows from the structure theory of $\Lambda(\op{G})$-modules that \[\rm{r}(A/\Q_{\infty})=\op{corank}_{\Omega(\op{G})}\op{Sel}(A/\Q_{\infty})[\varpi],\] see \eqref{strtheory}. There is a natural map 
\[\alpha:\op{Sel}(A[\varpi]/\Q_{\infty})\rightarrow \op{Sel}(A/\Q_{\infty})[\varpi]\]with finite kernel. It suffices to show that the cokernel is $\Omega(\op{G})$-cotorsion. Consider the diagram:
\[
\begin{tikzcd}[column sep = small, row sep = large]
0\arrow{r} & \op{Sel}(A[\varpi]/\Q_{\infty}) \arrow{r}  \arrow{d}{\alpha} & H^1(\Q_S/\Q_{\infty}, A[\varpi]) \arrow{r} \arrow{d}{\beta} & \bigoplus_{v\in S}\mathcal{H}_v(A[\varpi]/\Q_{\infty})  \arrow{d}{\gamma} \\
0\arrow{r} & \op{Sel}(A/\Q_{\infty})[\varpi] \arrow{r} & H^1(\Q_S/\Q_{\infty}, A[\varpi^{\infty}])[\varpi]  \arrow{r}  &\bigoplus_{v\in S}\mathcal{H}_v(A/\Q_{\infty})[\varpi],
\end{tikzcd}\]
We show that the kernel of $\gamma$ is $\Omega(\op{G})$-cotorsion. For $v\neq p$, Lemma \ref{lemma37} asserts that $\mathcal{H}_v(A[\varpi]/\Q_{\infty})$ is $\Omega(\op{G})$-torsion. The map 
\[\mathcal{H}_p(A[\varpi]/\Q_{\infty})\rightarrow \mathcal{H}_p(A/\Q_{\infty})[\varpi]\]is injective (cf. for instance, the proof of \cite[Proposition 2.8]{greenbergvatsal}). Therefore, the kernel of $\gamma$ is $\Omega(\op{G})$-torsion and this completes the proof.
\end{proof}
In order to analyze the group $\op{Sel}(A[\varpi]/\Q_{\infty})$, we analyze the image of the map \[\op{Sel}(A[\varpi]/\Q_{\infty})\longrightarrow H^1(\Q_S/\Q_{\infty}, \F(\varphi_2)).\] Let $S(\Q_{\infty})$ denote the set of primes of $\eta$ of $\Q_{\infty}$ that divide a prime $v\in S$. This image is contained in the Selmer group defined by:
\[\op{Sel}(\F(\varphi_2)/\Q_{\infty}):=\left\{f\in H^1(\Q_S/\Q_{\infty}, \F(\varphi_2))\mid f\text{ is unramified at all } \eta\in S(\Q_{\infty})\right\}.\]

\begin{Lemma}\label{psudonull lemma}
The group $\op{Sel}(\F(\varphi_2)/\Q_{\infty})^{\vee}$ is a pseudonull $\Lambda(\op{G})$-module.
\end{Lemma}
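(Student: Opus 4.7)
The plan is to identify $\op{Sel}(\F(\varphi_2)/\Q_{\infty})^{\vee}$ with a subquotient of an isotypic component of $X_{\infty}/\varpi$ via inflation-restriction, and then to deduce pseudonullity from the standing hypothesis of Theorem \ref{theorem2} that $\mu(X_{\infty})=0$.

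First I would set $K_{\infty}:=\Q_{\infty}(\varphi_2)$ and $\Delta:=\op{Gal}(K_{\infty}/\Q_{\infty})$. Because $\varphi_2$ takes values in $\F^{\times}$, $\Delta$ embeds into $\F^{\times}$ and has order prime to $p$, so $H^i(\Delta,-)$ vanishes on any $p$-primary module for $i\geq 1$. Applied to the inflation-restriction sequence for $\op{G}_{\Q_{\infty},S}\twoheadrightarrow \Delta$, this produces an isomorphism
\[H^1(\Q_S/\Q_{\infty},\F(\varphi_2)) \xrightarrow{\sim} \op{Hom}(\op{G}_{K_{\infty},S}^{\op{ab}},\F)^{\Delta=\varphi_2},\]
the target being the $\varphi_2$-isotypic component under the twisted conjugation action of $\Delta$. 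Applying the same prime-to-$p$ argument to the local inertia groups at each prime $\eta\in S(\Q_{\infty})$ shows that the unramified conditions match on both sides, so the isomorphism restricts to an identification
\[\op{Sel}(\F(\varphi_2)/\Q_{\infty}) \xrightarrow{\sim} \op{Hom}(X_{\infty},\F)^{\Delta=\varphi_2}\]
with $X_{\infty}$ as in Theorem \ref{theorem2}.

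Dualizing, $\op{Sel}(\F(\varphi_2)/\Q_{\infty})^{\vee}$ becomes a quotient of the $\varphi_2^{-1}$-isotypic component of $X_{\infty}/\varpi$. Invoking the hypothesis of Theorem \ref{theorem2} that $X_{\infty}$ is a cotorsion $\Lambda(\op{G})$-module with $\mu(X_{\infty})=0$, the structure theorem recalled in Definition \ref{refinedmudef} shows that modulo pseudonulls, $X_{\infty}[\varpi^{\infty}]$ is trivial. Therefore, modulo pseudonulls, $X_{\infty}/\varpi$ is a torsion $\Omega(\op{G})$-module, and the same holds for its $\varphi_2^{-1}$-isotypic quotient.

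Finally, any $\Lambda(\op{G})$-module annihilated by $\varpi$ and torsion over $\Omega(\op{G})=\Lambda(\op{G})/\varpi$ has codimension at least two in $\Lambda(\op{G})$ and is therefore pseudonull. I expect the main technical point to be the careful translation of local Selmer conditions between $\Q_{\infty}$ and $K_{\infty}$ at primes $\eta\in S(\Q_{\infty})$: one must check that ramification in $K_{\infty}/\Q_{\infty}$ neither creates nor destroys unramified classes upon passing to $\Delta$-invariants. This is harmless because $|\Delta|$ is coprime to $p$, so local inflation-restriction again provides the desired bijection, but it is the only nontrivial bookkeeping step beyond the routine class field theory and structure theory inputs.
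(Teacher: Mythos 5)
Your proof is essentially the same as the paper's: both invoke the prime-to-$p$ order of $\Delta=\op{Gal}(\Q_{\infty}(\varphi_2)/\Q_{\infty})$ together with inflation--restriction to identify $\op{Sel}(\F(\varphi_2)/\Q_{\infty})^{\vee}$ with a single $\Delta$-eigenspace of $X_{\infty}/\varpi X_{\infty}$, and then deduce pseudonullity from the standing hypothesis that $X_{\infty}$ is cotorsion with $\mu(X_{\infty})=0$. The only (cosmetic) discrepancy is that the paper lands on the $\varphi_2$-eigenspace where you write $\varphi_2^{-1}$, which is immaterial since the pseudonullity of $X_{\infty}/\varpi X_{\infty}$ applies eigenspace by eigenspace.
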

\begin{proof}
Since the order of the group $\Delta:=\op{Gal}(\Q_{\infty}(\varphi_2)/\Q_{\infty})$ is coprime to $p$, we have that $H^i(\Delta, \F(\varphi_2))=0$ for $i>0$. It follows from the inflation-restriction sequence that the restriction map induces an isomorphism
\[H^1(\Q_S/\Q_{\infty}, \F(\varphi_2))\xrightarrow{\sim} \op{Hom}\left(\op{Gal}(\Q_S/\Q_{\infty}(\varphi_2)),\F(\varphi_2)\right)^{\Delta}.\] Therefore, there is an isomorphism:
\begin{equation}\label{phi2}\op{Sel}(\F(\varphi_2)/\Q_{\infty})\xrightarrow{\sim} \op{Hom}\left(X_{\infty}/pX_{\infty},\F(\varphi_2)\right)^{\Delta}.\end{equation}
Note that $X_{\infty}/pX_{\infty}$ decomposes into eigenspaces for the action of $\Delta$
\[X_{\infty}/pX_{\infty}=\bigoplus_{\psi: \Delta\rightarrow \F_p^{\times}} \left(X_{\infty}/pX_{\infty}\right)^{\psi}.\]
Note that for $\psi\neq \varphi_2$, 
\[\op{Hom}\left(\left(X_{\infty}/pX_{\infty}\right)^{\psi},\F(\varphi_2)\right)^{\Delta}=0.\]
It follows from \eqref{phi2} that 
\[\begin{split}\op{Sel}(\F(\varphi_2)/\Q_{\infty})& \xrightarrow{\sim} \bigoplus_{\psi}\op{Hom}\left(\left(X_{\infty}/pX_{\infty}\right)^{\psi},\F(\varphi_2)\right)^{\Delta}\\
& \xrightarrow{\sim} \op{Hom}\left(\left(X_{\infty}/pX_{\infty}\right)^{\varphi_2},\F(\varphi_2)\right)^{\Delta}\\
&\xrightarrow{\sim} \op{Hom}\left(\left(X_{\infty}/pX_{\infty}\right)^{\varphi_2},\F\right).\end{split}\]
Since $X_{\infty}$ is a cotorsion $\Lambda$-module with $\mu(X_{\infty})=0$, it follows that $X_{\infty}/p$ is pseudonull as a $\Lambda(\op{G})$-module from which it follows that $\op{Sel}(\F(\varphi_2)/\Q_{\infty})^{\vee}$ is pseudonull as well.
\end{proof}
\begin{proof}[Proof of Theorem $\ref{theorem2}$]
We prove the result in the case when $\bar{\rho}_T$ is residually aligned. When $\bar{\rho}_T$ is skew, the argument is very similar. Since Theorem $\ref{theorem1}$ asserts that the $\mu$-invariant is positive, it remains to show that the $\mu$-multiplicity $\rm{r}(A/\Q_{\infty})\leq 1$. Consider the long exact sequence in cohomology associated to the short exact sequence of $\F[\op{G}_{\Q, S}]$-modules
\[0\rightarrow \F(\varphi_1)\rightarrow A[\varpi]\rightarrow \F(\varphi_2)\rightarrow 0.\]This induces an exact sequence
\[W_1\xrightarrow{\alpha_1} \op{Sel}(A[\varpi]/\Q_{\infty})\xrightarrow{\alpha_2} W_2\rightarrow 0.\]
Here, $W_1$ is the subgroup of $H^1(\Q_S/\Q_{\infty}, \F(\varphi_1))$ which is the preimage of the residual Selmer group $\op{Sel}(A[\varpi]/\Q_{\infty})$ w.r.t. the map
\[H^1(\Q_S/\Q_{\infty}, \F(\varphi_1))\rightarrow H^1(\Q_S/\Q_{\infty}, A[\varpi]). \]The group $W_2$ is the image of $\op{Sel}(A[\varpi]/\Q_{\infty})$ in $H^1(\Q_S/\Q_{\infty}, \F(\varphi_2))$. Lemma $\ref{globalECLemma2}$ asserts that $H^1(\Q_S/\Q_{\infty}, \F(\varphi_1))$ has $\Omega(\op{G})$-corank $1$. Therefore, the image of $\alpha_1$ has $\Omega(\op{G})$-corank $\leq 1$. We show that $W_2$ is $\Omega(\op{G})$-cotorsion. Recalling the Selmer conditions on $A[\varpi]$, it is easy to see that $W_2$ is contained in $\op{Sel}(\F(\varphi_2)/\Q_{\infty})$. By Lemma \ref{psudonull lemma}, it follows that $W_2$ is $\Omega(\op{G})$-cotorsion. Therefore, the $\Omega(\op{G})$-corank of $\op{Sel}(A[\varpi]/\Q_{\infty})$ is equal to $\leq 1$. The result follows from Lemma $\ref{mumultbasiclemma}$.

\par Similar arguments show that the $\mu$-multiplicity is $\leq 1$ in the residually skew case.
\end{proof}
\section{Lifting reducible Galois representations}\label{section 5}
\par Recall that $\F$ is a finite field of characteristic $p$ and that $\op{W}(\F)$ is the ring of Witt vectors with residue field $\F$. Let $\bar{\rho}:\op{G}_{\Q}\rightarrow \op{GL}_2(\F)$ be a reducible Galois representation and $\varphi$ be the characters such that $\bar{\rho}\simeq \mtx{\varphi}{\ast}{0}{1}$. Assume that $\bar{\rho}$ satisfies the conditions of Theorem $\ref{mainlifting}$, in particular, $\varphi$ is odd. Recall that $\chi$ is the $p$-adic cyclotomic character and $\bar{\chi}$ denote its mod-$p$ reduction. For any character $\alpha:\op{G}_{\Q}\rightarrow \op{GL}_1(\F)$, we denote by $\tilde{\alpha}:\op{G}_{\Q}\rightarrow \op{GL}_1(\op{W}(\F))$ the Teichm\"uller lift of $\alpha$. Express $\varphi$ as a product $\bar{\chi} \eta$. Fix a positive integer $k\equiv 2 \mod{p^N(p-1)}$ and denote by $\psi$ the character $\chi^{k-1} \tilde{\eta}$ lifting $\varphi$. Here we recall that $\tilde{\eta}$ is the Teichm\"uller lift of $\eta$.
\begin{Th}\label{th51}
Let $\bar{\rho}$ be as above and $N$ a positive integer. Let $k$ be a positive integer such that $k\equiv 2 \mod{p^N(p-1)}$ and $\psi$ be the character defined as above. There exists a Galois representation 
\[\rho: \op{G}_{\Q}\rightarrow \op{GL}_2(\op{W}(\F))\] lifting $\bar{\rho}$ such that the following assertions hold
\begin{enumerate}
    \item the mod-$p^N$ reduction is of the form $\rho_N=\mtx{\psi_N}{\ast}{0}{1}$.
    \item $\rho$ is unramified away from finitely many primes, 
    \item $\det \rho=\psi$,
    \item $\rho$ is irreducible,
    \item the local representation is of the form \[\rho_{\restriction \op{G}_p}=\mtx{\chi_{\restriction \op{G}_p}^{k-1}\delta_1}{\ast}{0}{\delta_2},\] where $\delta_1,\delta_2:\op{G}_p\rightarrow \op{GL}_1(\op{W}(\F))$ are unramified characters. In other words, $\rho$ is ordinary at $p$.
\end{enumerate} Furthermore, the representation $\rho$ arises from a Hecke eigencuspform $f$ of weight $k$.
\end{Th}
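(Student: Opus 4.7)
The plan is to combine the Ramakrishna / Hamblen-Ramakrishna deformation-lifting machinery with the residually reducible modularity theorem of Skinner-Wiles. The argument will proceed in three stages. First I would construct a mod-$p^N$ reducible lift $\rho_N$ of $\bar{\rho}$ of the shape $\mtx{\psi_N}{\ast}{0}{1}$. The congruence $k\equiv 2\pmod{p^N(p-1)}$ forces $\chi^{k-1}\equiv \chi\pmod{p^N}$ via the decomposition $\Z_p^\times \simeq \mu_{p-1}\times(1+p\Z_p)$ (the $\mu_{p-1}$-component contributes trivially since $k-2\equiv 0\pmod{p-1}$, and on the $1+p\Z_p$-component the binomial expansion shows that raising to the power $k-2$ lands in $1+p^{N+1}\Z_p$). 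Hence $\psi$ reduces to $\varphi$ mod $p$ and $\psi_N$ is a meaningful character lifting $\varphi$ modulo $p^N$. The residual extension class $[\bar c]\in H^1(\Q_S/\Q,\F(\varphi))$ recording $\bar{\rho}$ must then be lifted inductively to a class $[c_N]\in H^1(\Q_{S'}/\Q,(\op{W}(\F)/p^N)(\psi_N))$ for some larger set $S'\supseteq S$; at each Bockstein step the obstruction lies in an $H^2$-group that can be killed by enlarging the ramification set via a Chebotarev/Poitou-Tate argument.

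Second, I would deform $\rho_N$ to a characteristic zero lift by running Ramakrishna's method in the residually reducible setting developed in \cite{hamblenramakrishna}. One selects auxiliary ``Ramakrishna primes'' carrying tailored local deformation conditions which, combined with the local-at-$p$ ordinary condition of the prescribed form (diagonal $(\chi^{k-1}\delta_1,\delta_2)$ with $\delta_i$ unramified) and the fixed-determinant condition $\det=\psi$, make the global deformation problem unobstructed. The hypotheses of Theorem $\ref{mainlifting}$ --- $p\geq 5$, $\varphi$ odd, $\bar{\rho}$ indecomposable, $\varphi^2\neq 1$, $\varphi\neq \bar{\chi}^{-1}$, the local non-degeneracies at $p$, and the spanning condition on $\varphi$ --- are exactly what is needed for the associated Selmer and dual-Selmer cohomology to vanish. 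The resulting lift is a Galois representation $\rho:\op{G}_{\Q}\to \op{GL}_2(\op{W}(\F))$ satisfying (1)--(3) and (5) of the theorem; irreducibility (4) is forced by the chosen Ramakrishna-prime conditions, which rule out reducible characteristic-zero deformations.

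Third, I would invoke the Skinner-Wiles modularity theorem \cite{skinnerwiles} for ordinary two-dimensional Galois representations with residually reducible semisimplification. The hypotheses on $\varphi$ (oddness, the distinctness of characters, and the local-at-$p$ non-triviality) fit the Skinner-Wiles framework, so $\rho$ is modular; the Hodge-Tate weights of $\rho_{\restriction \op{G}_{\Q_p}}$, computed from the prescribed diagonal $(\chi^{k-1}\delta_1,\delta_2)$, pin the weight of the associated eigencuspform to be $k$. The three stages together produce the desired modular representation.

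The main obstacle is the second stage: the standard Ramakrishna and Hamblen-Ramakrishna arguments produce characteristic-zero lifts of a mod-$p$ representation, but here I need a lift that agrees with a \emph{prescribed} mod-$p^N$ representation $\rho_N$. This means carrying out the entire deformation argument not over $\F$ but over the Artinian coefficient ring $\op{W}(\F)/p^N$, verifying that the auxiliary-prime local conditions are simultaneously compatible with the mod-$p^N$ reduction and with the fixed-determinant/ordinary conditions, and redoing the Selmer/dual-Selmer vanishing computations with $\op{W}(\F)/p^N$-coefficients. Carrying this out via an explicit Galois-theoretic cocycle construction (rather than a pseudo-representation argument, which would only determine the semisimplification and lose the crucial mod-$p^N$ shape controlling the $\mu$-invariant later) is what makes this step delicate; once it is in place the modularity step is a direct application of known results.
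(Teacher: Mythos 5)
Your three-stage outline matches the paper's strategy: first construct a reducible mod-$p^N$ lift $\rho_N=\mtx{\psi_N}{\ast}{0}{1}$ of $\bar{\rho}$ ramified at an enlarged set of primes (the paper's Proposition $\ref{prop63}$, whose obstruction classes lie in $\Sha^2$-groups killed by adding trivial primes), then lift $\rho_N$ to characteristic zero via the Hamblen--Ramakrishna machinery, then invoke Skinner--Wiles. Stages 1 and 3 are essentially correct. However, your account of the technical crux in Stage 2 is off in a way that would create a genuine gap if followed literally. You propose to ``redo the Selmer/dual-Selmer vanishing computations with $\op{W}(\F)/p^N$-coefficients,'' but this is neither necessary nor what makes the argument work: the Selmer and dual-Selmer vanishing remains in the usual $\F$-coefficient adjoint cohomology $H^1_{\mathcal{N}}(\op{G}_{\Q,S\cup X},\g)$ and $H^1_{\mathcal{N}^\perp}(\op{G}_{\Q,S\cup X},\g^*)$, exactly as in \cite{hamblenramakrishna}. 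The mechanism that lets one start the inductive lifting from mod $p^{N+1}$ rather than mod $p$ is that the trivial-prime local data $(\mathcal{C}_v,\mathcal{N}_v)$ form a ``highly versal pair of degree $3$'': the twist action of $\mathcal{N}_v$ on $\op{Def}_v(\op{W}(\F)/p^m)$ stabilizes $\mathcal{C}_v(\op{W}(\F)/p^m)$ for all $m\geq 3$, and since $N+1\geq 3$ the standard twisting argument (with $\F$-coefficient Selmer groups) applies verbatim at every inductive step. What guarantees compatibility of $\rho_N$ itself with the trivial-prime local conditions is the elementary observation that any upper-triangular lift with determinant $\psi_N$ automatically satisfies $\mathcal{C}_v$ at a trivial prime, provided one picks type (3) or type (4) according to whether $\rho_2$ is unramified or ramified at $v$. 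Finally, irreducibility does not come from vaguely asserting that ``the chosen Ramakrishna-prime conditions rule out reducible characteristic-zero deformations'': the paper must first adjust the lift $\rho_{N+1}'$ of $\rho_N$ by a global cohomology class (via \cite[Theorem 41]{hamblenramakrishna} and two further trivial primes carrying type-(2) conditions) so that the image of $\rho_{N+1}$ contains $\op{ker}\bigl(\op{SL}_2(\op{W}(\F)/p^{N+1})\to \op{SL}_2(\op{W}(\F)/p^N)\bigr)$; it is this explicit image containment that forces every characteristic-zero lift to be irreducible.
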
 

Theorem $\ref{mainlifting}$ follows from Theorems $\ref{theorem1}$ and $\ref{th51}$ and is proved in section $\ref{section 6}$. The proof of Theorem $\ref{th51}$ is fairly involved and will be completed in the next section. In the remainder of this section, we prove some preliminary results towards this end.
\par Let $S$ be the set of primes containing $p$ and the primes at which $\bar{\rho}$ is ramified.
\begin{Definition} Let $\mathcal{C}_{\op{W}(\F)}$ be the category of coefficient rings over $\text{W}(\F)$ with residue field $\F$. The objects of this category consist of local $\text{W}(\F)$-algebras $(R,\mathfrak{m})$ for which
      \begin{itemize}
          \item $R$ is complete and noetherian,
          \item $R/\mathfrak{m}$ is isomorphic to $\F$ as a $\text{W}(\F)$-algebra.
      \end{itemize}A morphism in this category is a map of local rings which is also a $\text{W}(\F)$-algebra homorphism. \end{Definition}
      A coefficient ring $R$ is equipped with a residue map $R\rightarrow \F$, this is the composite of the reduction map $R\rightarrow R/\mathfrak{m}$ and the unique isomorphism $R/\mathfrak{m}\rightarrow \F$ of $\op{W}(\F)$-algebras. At each prime number $v$ choose an embedding $\iota_v:\bar{\Q}\hookrightarrow \bar{\Q}_v$ and note that $\iota_v$ fixes an inclusion $\op{G}_{v}\hookrightarrow \op{G}_{\Q}$. 
      
      \begin{Definition}
      Let $\Pi$ denote $\op{G}_{\Q}$ (resp. $\op{G}_{v}$), $\bar{r}:\Pi\rightarrow \op{GL}_2(\F)$ denote $\bar{\rho}$ (resp. $\bar{\rho}_{\restriction v}$) and $\nu$ denote $\psi$ (resp. $\psi_{\restriction v}$). Let $R$ be a coefficient ring with maximal ideal $\mathfrak{m}$. Note that $R$ is a $\op{W}(\F)$-algebra and we may view $\eta$ as a character
      \[\eta:\Pi\rightarrow \op{GL}_1(R).\]A \textit{lift} $r:\Pi\rightarrow \op{GL}_2(R)$ of $\bar{r}$ is a homomorphism which reduces to $\bar{r}$, as depicted
      \[ \begin{tikzpicture}[node distance = 2.5 cm, auto]
            \node at (0,0) (G) {$\Pi$};
             \node (A) at (3,0){$\op{GL}_2(\F)$.};
             \node (B) at (3,2){$\op{GL}_2(R)$};
      \draw[->] (G) to node [swap]{$\bar{r}$} (A);
       \draw[->] (B) to node{} (A);
      \draw[->] (G) to node {$r$} (B);
\end{tikzpicture}\]
      Furthermore, we stipulate that \[\op{det} r=\nu.\]
      \end{Definition}
      
      For $(R, \mathfrak{m})\in \mathcal{C}_{\op{W}(\F)}$, let $\widehat{\op{GL}_2}(R)\subset \op{GL}_2(R)$ be the subgroup of matrices which reduce to the identity modulo the maximal ideal.
      \begin{Definition} Let $\Pi$ denote $\op{G}_{\Q}$ (resp. $\op{G}_{v}$), two lifts $\rho,\rho':\Pi\rightarrow \op{GL}_2(R)$ of $\bar{\rho}$ (resp. $\bar{\rho}_{\restriction v}$) the residual representation are \textit{strictly-equivalent} if $\rho=A\rho' A^{-1}$ for $A\in \widehat{\op{GL}}_2(R)$. A \textit{deformation} is a strict equivalence class of lifts.
      \end{Definition}
     Note that if $\rho$ is a deformation of $\bar{\rho}$ then $\rho_{\restriction v}$ is a deformation of $\bar{\rho}_{\restriction v}$. Therefore, a global deformation gives rise to a local deformation at each prime $v$. At each prime $v$, set $\operatorname{Def}_v(R)$ to be the set of $R$-deformations of $\bar{\rho}_{\restriction v}$. The association $R\mapsto \operatorname{Def}_v(R)$ defines a functor $\operatorname{Def}_v:\mathcal{C}_{\op{W}(\F)}\rightarrow \operatorname{Sets}$. A deformation functor at $v$ is a subfunctor $\mathcal{C}_v$ of $\op{Def}_v$. For each coefficient ring $R$, the set $\mathcal{C}_v(R)$ is a subset of $\op{Def}_v(R)$. We say that a deformation $\varrho:\op{G}_v\rightarrow \op{GL}_2(R)$ of $\bar{\rho}_{\restriction v}$ \textit{satisfies} $\mathcal{C}_v$ if $\varrho\in \mathcal{C}_v(R)$. The lifting strategy is based on a local-global principle, where local deformation functors shall be fixed and the global deformations shall be required to satisfy these functors at the primes at which they are allowed to ramify.
     \begin{Definition}\label{adgaloisaction}Set $\op{Ad}\bar{\rho}$ to denote the Galois module whose underlying vector space consists of $2\times 2$ matrices with entries in $\F$. The Galois action is as follows: for $g\in \op{G}_{\Q}$ and $v\in \op{Ad}\bar{\rho}$, 
     set $g\cdot v:=\bar{\rho}(g) v \bar{\rho}(g)^{-1}$. Let $\g$ be the $\op{G}_{\Q}$-stable submodule of trace zero matrices.
     \end{Definition}
      Note that the ring of dual numbers $\F[\epsilon]/(\epsilon^2)$ is contained in $\mathcal{C}_{\op{W}(\F)}$. Let $v$ be a prime number, the functor of local deformations $\op{Def}_v$ is equipped with a tangent space. As a set, this is defined to be $\operatorname{Def}_v(\F[\epsilon]/(\epsilon^2))$. The tangent space has the structure of a vector space over $\F$ and is in bijection with $H^1(\op{G}_{v},\g)$. The bijection
      \[ H^1(\op{G}_{v},\g)\longleftrightarrow\operatorname{Def}_v(\F[\epsilon]/(\epsilon^2))\]identifies a cohomology class $f$ with the deformation \[(\operatorname{Id}+\epsilon f)\bar{\rho}_{\restriction v}: \operatorname{G}_{v}\rightarrow \op{GL}_2(\F[\epsilon]/(\epsilon^2)).\] Let $m\geq 1$, $\varrho_m\in \operatorname{Def}_v(\text{W}(\F)/p^{m})$ and $f\in H^1(\op{G}_{v}, \g)$. The \textit{twist} of $\varrho_m$ by $f$ is defined by the formula $(\op{Id}+p^{m-1}f)\varrho_m$. The following is a simple exercise left to the reader.
      \begin{Fact}\label{ptorsor}The set of deformations $\varrho_m$ of a fixed $\varrho_{m-1}\in \operatorname{Def}_v(\text{W}(\F)/p^{m-1})$ is either empty or in bijection with $H^1(\op{G}_{v},\g)$.
      \end{Fact}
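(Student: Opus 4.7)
The plan is to exhibit the standard torsor structure on the fiber of the reduction map $\operatorname{Def}_v(\op{W}(\F)/p^m) \to \operatorname{Def}_v(\op{W}(\F)/p^{m-1})$ under the twisting action defined immediately before the Fact. I assume the fiber over $\varrho_{m-1}$ is non-empty, fix a base lift $\varrho_m$ representing a class in the fiber, and construct a bijection from the fiber to $H^1(\op{G}_v,\g)$.

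For the comparison map, I would take an arbitrary lift $\varrho_m'$ in the fiber and exploit that $\varrho_m'(g)\varrho_m(g)^{-1} \equiv I \pmod{p^{m-1}}$ to write uniquely
\[
\varrho_m'(g) = \bigl(I + p^{m-1} f(g)\bigr)\varrho_m(g)
\]
for a function $f:\op{G}_v\to M_2(\F)$. Three verifications pin down the target of this construction. First, both $\varrho_m$ and $\varrho_m'$ have determinant $\nu=\psi_{\restriction v}$, and since $\det(I+p^{m-1}f(g)) = 1 + p^{m-1}\op{tr} f(g)$, this forces $\op{tr} f(g)=0$, so $f$ takes values in $\g$. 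Second, expanding $\varrho_m'(gh)=\varrho_m'(g)\varrho_m'(h)$ and using $p^{2(m-1)}\equiv 0\pmod{p^m}$ (valid for $m\geq 2$; the case $m=1$ is trivial as the source ring is zero and the fiber is a single point) produces the identity $f(gh) = f(g)+\bar\rho(g)f(h)\bar\rho(g)^{-1}$, which is precisely the cocycle condition in $Z^1(\op{G}_v,\g)$ for the adjoint action of Definition \ref{adgaloisaction}. Third, replacing $\varrho_m'$ by a strictly equivalent $A\varrho_m' A^{-1}$ with $A \in \widehat{\op{GL}}_2(\op{W}(\F)/p^m)$, and reducing $A = I + p^{m-1}a + \text{(higher)}$, alters $f$ by the coboundary $g\mapsto a - \bar\rho(g)a\bar\rho(g)^{-1}$, so the class $[f] \in H^1(\op{G}_v,\g)$ depends only on the deformation $\varrho_m'$.

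Surjectivity is the content of the twist construction: given any $f\in Z^1(\op{G}_v,\g)$, the formula $\varrho_m^f := (\op{Id}+p^{m-1}f)\varrho_m$ is a homomorphism (exactly by the cocycle identity reversed), has determinant $\psi$ (as $\op{tr} f=0$), and reduces to $\varrho_{m-1}$, hence gives a lift in the fiber mapping to $[f]$. Injectivity is the contrapositive of the coboundary calculation: if $[f] = 0$, then the corresponding lift is strictly equivalent to $\varrho_m$.

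The main obstacle is purely bookkeeping: tracking powers of $p$ in $\op{W}(\F)/p^m$ and confirming that the error terms of order $p^{2(m-1)}$ vanish, so that the composition law on $\op{GL}_2(\op{W}(\F)/p^m)$ reduces cleanly to the additive cocycle relation on $\g$. Once this is handled, the three checks above mesh to show that the fiber is either empty or a torsor under $H^1(\op{G}_v,\g)$, as claimed.
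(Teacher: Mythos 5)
The paper does not actually supply a proof of this Fact; it is declared ``a simple exercise left to the reader,'' so there is no written argument of the authors' to compare yours against. Your argument is the standard torsor computation, and it is correct up to one point, which I flag below.

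The gap is in the third verification. You write ``replacing $\varrho_m'$ by a strictly equivalent $A\varrho_m' A^{-1}$ with $A \in \widehat{\op{GL}}_2(\op{W}(\F)/p^m)$, and reducing $A = I + p^{m-1}a + \text{(higher)}$,'' but membership in $\widehat{\op{GL}}_2(\op{W}(\F)/p^m)$ only guarantees $A \equiv I \pmod{p}$, not $A \equiv I \pmod{p^{m-1}}$. If both $\varrho_m$ and $\varrho_m'=A\varrho_m A^{-1}$ reduce to the \emph{same} $\varrho_{m-1}$, then the reduction $\bar A:=A \bmod p^{m-1}$ must centralize $\varrho_{m-1}$, but it need not be the identity. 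In this paper that centralizer is genuinely large: at a trivial prime $v$ the residual $\bar{\rho}_{\restriction v}$ is trivial, so for $m=3$ the centralizer of $\varrho_2$ in $\widehat{\op{GL}}_2(\op{W}(\F)/p^2)$ is \emph{all} of $\widehat{\op{GL}}_2(\op{W}(\F)/p^2)$. Conjugating by a lift of such a nontrivial $\bar A$ moves the cocycle $f$ by $f \mapsto \bar A f \bar A^{-1} + c$ for a certain cocycle $c$, which is generally \emph{not} cohomologous to $f$. Concretely, if $\varrho_2(\sigma)=I+pN$ with $N\neq 0$, conjugation of a lift $I+pN+p^2M$ by $I+pB$ sends $M\mapsto M+[B,N]$, and the resulting quotient of the set of lifts is $\g/\operatorname{im}(\operatorname{ad}_N)$, which has smaller dimension than $H^1(\op{G}_v,\g)$. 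Thus the literal bijection with $H^1$ fails when the residual representation has nontrivial centralizer, and your injectivity argument does not close this loop.

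What your argument actually establishes is correct and is what the paper uses: the set of \emph{lifts} of the fixed homomorphism $\varrho_{m-1}$, taken modulo conjugation by the kernel of $\widehat{\op{GL}}_2(\op{W}(\F)/p^m)\to\widehat{\op{GL}}_2(\op{W}(\F)/p^{m-1})$ (i.e.\ by matrices $\equiv I \bmod p^{m-1}$), is, if nonempty, a torsor under $H^1(\op{G}_v,\g)$. In particular the $Z^1$-twisting action on lifts is transitive, which is exactly the content invoked in the proof of Theorem \ref{th51} (``there is a cohomology class $f_v$ such that the twist $(\op{Id}+p^N f_v)\rho_{N+1|v}'$ is equal to $\varrho_v$''). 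So the Fact is stated a bit loosely in terms of $\op{Def}_v$, and to make your third verification watertight you should either (a) restate the conclusion for lifts modulo the $p^{m-1}$-kernel, which is the correct torsor statement, or (b) explicitly address the nontrivial part of the centralizer of $\varrho_{m-1}$ and explain why it may be ignored in the applications that actually occur in the paper.

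Two small remarks that do not affect correctness: the case $m=1$ is vacuous since $\op{W}(\F)/p^0$ is the zero ring, so you may simply assume $m\geq 2$ throughout; and in the trace computation it is worth noting explicitly that $\psi$ and its reductions are the fixed determinant character $\nu$, so $\det\varrho_m=\det\varrho_m'=\nu_m$ pins down $\op{tr}f=0$.
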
 Let $R\in \mathcal{C}_{\op{W}(\F)}$ with maximal ideal $\mathfrak{m}_R$, and $I$ an ideal in $R$. The mod-$I$ reduction map $R\rightarrow R/I$ is said to be a \textit{small extension} if $I.\mathfrak{m}_R=0$.\begin{Definition}\label{localdefconditions}
A functor of deformations $\mathcal{C}_v:\mathcal{C}_{\op{W}(\F)}\rightarrow \op{Sets}$ of $\bar{\rho}_{\restriction v}$ is called a \textit{deformation condition} if the following conditions $\eqref{dc1}$ to $\eqref{dc3}$ stated below are satisfied.
      \begin{enumerate}
          \item\label{dc1} $\mathcal{C}_v(\F)=\{\bar{\rho}_{\restriction v}\}.$
          \item\label{dc2} For $i=1,2$, let $R_i\in \mathcal{C}_{\op{W}(\F)}$ and $\rho_i\in\mathcal{C}_v(R_i)$. Let $I_1$ be an ideal in $R_1$ and $I_2$ an ideal in $R_2$ such that there is an isomorphism $\alpha:R_1/I_1\xrightarrow{\sim} R_2/I_2$ satisfying \[\alpha(\rho_1 \;\text{mod}\;{I_1})=\rho_2 \;\text{mod}\;{I_2}.\] Let $R_3$ be the fibred product \[R_3=\lbrace(r_1,r_2)\mid \alpha(r_1\;\text{mod}\; I_1)=r_2\; \text{mod} \;I_2\rbrace\] and $\rho_1\times_{\alpha} \rho_2$ the induced $R_3$-representation, then $\rho_1\times_{\alpha} \rho_2\in \mathcal{C}_v(R_3)$.
          \item\label{dc3} Let $R\in \mathcal{C}_{\op{W}(\F)}$ with maximal ideal $\mathfrak{m}_R$. If $\rho\in \mathcal{C}_v(R)$ and $\rho\in \mathcal{C}_v(R/\mathfrak{m}_R^n)$ for all $n>0$ it follows that $\rho\in \mathcal{C}_v(R)$. In other words, the functor $\mathcal{C}_v$ is continuous.
      \end{enumerate}
      The functor of deformations $\mathcal{C}_v$ is said to be \textit{liftable} if for every small extension $R\rightarrow R/I$ the induced map $\mathcal{C}_v(R)\rightarrow \mathcal{C}_v(R/I)$ is surjective.
      \end{Definition}
   
Condition $\eqref{dc2}$ is referred to as the Mayer-Vietoris property. By a well-known result of Grothendieck \cite[section 18]{Mazurintro}, conditions $\eqref{dc1}$ to $\eqref{dc3}$ guarantee that $\mathcal{C}_v$ is pro-represented by a scheme $\op{Spec} R_v$, where $R_v\in  \mathcal{C}_{\op{W}(\F)}$. In other words, there is a deformation 
 \[\rho_v^{\op{univ}}:\op{G}_{v}\rightarrow \op{GL}_2(R_v)\]of $\bar{\rho}_{\restriction v}$ such that any deformation $\varrho\in \mathcal{C}_v(R)$ is induced by a unique map of coefficient rings $R_v\rightarrow R$. In addition, if it is liftable, then $\op{Spec} R_v$ is formally smooth.      
 \begin{Definition}The tangent space $\mathcal{N}_v$ of a deformation condition $\mathcal{C}_v$ consists of $f\in H^1(\op{G}_{v}, \g)$, such that $(\op{Id}+\epsilon f) \bar{\rho}_{\restriction v}\in \mathcal{C}_v(\F[\epsilon]/(\epsilon^2))$.
 \end{Definition}
 \begin{Fact} The action of $\mathcal{N}_v$ on $\operatorname{Def}_v(\text{W}(\F)/p^m)$ stabilizes $\mathcal{C}_v(\text{W}(\F)/p^m)$. In other words, if $\varrho_m\in \mathcal{C}_v(\text{W}(\F)/p^m)$ and $f\in \mathcal{N}_v$, then 
      \[(\op{Id}+p^{m-1}f)\varrho_{m}\in \mathcal{C}_v(\text{W}(\F)/p^m). \]
 \end{Fact}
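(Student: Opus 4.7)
The plan is to realize the twisted representation $(\op{Id}+p^{m-1}f)\varrho_m$ as the pushforward of a deformation lying in $\mathcal{C}_v$ over an auxiliary fibered product ring, so that membership in $\mathcal{C}_v(\op{W}(\F)/p^m)$ follows from the Mayer-Vietoris axiom $\eqref{dc2}$ and the functoriality of $\mathcal{C}_v$.

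First I would assemble two deformations. The hypothesis supplies $\varrho_m \in \mathcal{C}_v(\op{W}(\F)/p^m)$, and the condition $f \in \mathcal{N}_v$ gives $\rho_\epsilon := (\op{Id}+\epsilon f)\bar{\rho}_{\restriction v} \in \mathcal{C}_v(\F[\epsilon]/(\epsilon^2))$. Both reduce to $\bar{\rho}_{\restriction v}$ over $\F$, so taking $R_1 = \op{W}(\F)/p^m$ with $I_1 = (p)$, $R_2 = \F[\epsilon]/(\epsilon^2)$ with $I_2 = (\epsilon)$, and $\alpha = \op{Id}_\F$, axiom $\eqref{dc2}$ yields $\rho_3 := \varrho_m \times_\alpha \rho_\epsilon \in \mathcal{C}_v(R_3)$ over the coefficient ring
\[R_3 = \{(r, s+t\epsilon) \in R_1 \times R_2 \colon r \equiv s \pmod{p}\}.\]

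Next I would construct a $\op{W}(\F)$-algebra homomorphism $\phi : R_3 \to \op{W}(\F)/p^m$ by the formula $\phi(r, s+t\epsilon) := r + p^{m-1}\tilde{t}$, where $\tilde{t}$ denotes the Teichm\"uller lift of $t \in \F$. The verification that $\phi$ is multiplicative reduces to the relations $p \cdot p^{m-1} = 0$ in $\op{W}(\F)/p^m$ together with the compatibility $r \equiv s \pmod{p}$ built into $R_3$, which make the cross-terms in $\phi(r_1,s_1+t_1\epsilon)\phi(r_2,s_2+t_2\epsilon)$ match $\phi$ of the product. By functoriality of $\mathcal{C}_v$, the pushforward $\phi_*\rho_3$ lies in $\mathcal{C}_v(\op{W}(\F)/p^m)$, and a direct entrywise computation gives
\[\phi_*\rho_3(g) = \varrho_m(g) + p^{m-1} f(g)\bar{\rho}_{\restriction v}(g) = (\op{Id}+p^{m-1}f)\varrho_m(g),\]
where the last equality uses $\varrho_m(g) \equiv \bar{\rho}_{\restriction v}(g) \pmod{p}$ to rewrite $p^{m-1} f(g)\varrho_m(g) = p^{m-1} f(g)\bar{\rho}_{\restriction v}(g)$ in $\op{W}(\F)/p^m$.

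The main subtlety I anticipate is the bookkeeping around the map $\phi$: one must check that it is a well-defined $\op{W}(\F)$-algebra homomorphism and that its pushforward exactly reproduces the twist formula. Once this dictionary between the fibered product $R_3$ and the tangent-vector action is in place, Mayer-Vietoris and functoriality of $\mathcal{C}_v$ deliver the conclusion formally.
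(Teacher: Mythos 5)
The paper does not give an argument for this Fact; it is stated without proof immediately after the definition of the tangent space $\mathcal{N}_v$, and the preceding Fact~\ref{ptorsor} is explicitly deferred to the reader. Your proof is correct and is the standard one: it derives the stabilization property formally from the Mayer--Vietoris axiom~\eqref{dc2} together with the fact that $\mathcal{C}_v$ is a \emph{subfunctor} of $\operatorname{Def}_v$ (hence closed under pushforward along morphisms of coefficient rings --- note this closure is not one of the listed axioms \eqref{dc1}--\eqref{dc3} but is built into the phrase ``subfunctor'').

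Two small remarks on the bookkeeping, neither of which affects correctness. First, the choice of Teichm\"uller lift in the definition of $\phi$ is cosmetic: any set-theoretic section $\F\to\operatorname{W}(\F)$ would give the same map, since $p^{m-1}\tilde t\bmod p^m$ depends only on $t\bmod p$; what matters, as you note, is that $p^{2(m-1)}=0$ in $\operatorname{W}(\F)/p^m$ and that the fibred-product condition $r\equiv s\pmod p$ makes the cross-terms collapse, so $\phi$ is genuinely a $\operatorname{W}(\F)$-algebra homomorphism for $m\geq 2$. Second, it is worth observing that your $\phi$ is a local homomorphism only when $m\geq 2$ (for $m=1$ the formula sends a unit in $\F[\epsilon]$-direction to a unit of $\F$), but this is harmless: the statement is vacuous for $m=1$, since over $\F$ the twist $(\operatorname{Id}+f)\bar\rho_{\restriction v}$ need not even be invertible and $\mathcal{C}_v(\F)=\{\bar\rho_{\restriction v}\}$ by \eqref{dc1}, so the Fact is implicitly asserted for $m\geq 2$. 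Finally, the determinant constraint $\det\varrho=\nu$ built into the paper's notion of lift is preserved throughout: $\det\rho_\epsilon=\bar\nu$ because $f$ takes values in the trace-zero module $\g$, and $\det\bigl((\operatorname{Id}+p^{m-1}f)\varrho_m\bigr)=\nu_m$ again because $\operatorname{tr}f=0$ and $p^{2(m-1)}=0$; this is consistent with the determinant of your pushforward $\phi_*\rho_3$.
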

     It follows from the work of Ramakrishna \cite{RaviFM} that at each prime $v\in S$, there is a suitable liftable deformation condition.
      \begin{Proposition}\label{existencecv}
      Let $v\in S$, there is a subfunctor $\mathcal{C}_v\subseteq \op{Def}_v$ such that
      \begin{enumerate}
          \item $\mathcal{C}_v$ is a liftable deformation condition,
          \item the dimension of the tangent space $\mathcal{N}_v$ is given by \[\dim  \mathcal{N}_v=\begin{cases}h^0(\op{G}_{v}, \g)& \text{ if }v\neq p,\\
          h^0(\op{G}_{v}, \g)+1&\text{ if }v=p.\end{cases}\]
      \end{enumerate} Moreover, $\mathcal{C}_p$ consists of ordinary deformations.
      \end{Proposition}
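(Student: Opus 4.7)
The construction of $\mathcal{C}_v$ follows the template laid down by Ramakrishna in \cite{RaviFM}. The plan is to define $\mathcal{C}_v$ separately for $v\neq p$ and $v=p$, verify the three axioms of Definition \ref{localdefconditions} in each case, prove formal smoothness (liftability), and compute the tangent space dimension via local Tate duality combined with the local Euler characteristic formula.

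For $v\neq p$, I would take $\mathcal{C}_v$ to be the functor of \emph{minimally ramified} deformations: lifts $\varrho:\op{G}_v\to\op{GL}_2(R)$ in which the inertia subgroup $I_v$ acts through the same finite quotient as it does on $\bar\rho|_{I_v}$. Axiom (dc1) is immediate, (dc2) holds because the fibered product preserves the inertia factorization, and (dc3) is a direct continuity check. For liftability, given a small extension $R\twoheadrightarrow R/I$ and a lift to $R/I$, the obstruction to extending it lies in $H^2(\op{G}_v/I_v,\g^{I_v})$, which vanishes because $\op{G}_v/I_v\cong\widehat{\Z}$ has $p$-cohomological dimension $1$. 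For the tangent space, the local Euler characteristic $\chi(\op{G}_v,\g)=0$ for $v\neq p$, combined with local Tate duality $h^2(\op{G}_v,\g)=h^0(\op{G}_v,\g(1))$, reduces the count, and a direct analysis of which cohomology classes are minimally ramified yields $\dim\mathcal{N}_v=h^0(\op{G}_v,\g)$.

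For $v=p$, I would take $\mathcal{C}_p$ to be the ordinary deformation functor: lifts $\varrho$ strictly equivalent to an upper-triangular representation $\mtx{\chi^{k-1}\delta_1}{\ast}{0}{\delta_2}$ with $\delta_1,\delta_2$ unramified characters lifting the diagonal characters of $\bar\rho|_{\op{G}_p}$. Axioms (dc1) and (dc3) are immediate, and (dc2) follows because the compatible filtrations and unramified diagonal conditions on the factors induce such structure on the fibered product. For liftability, the obstruction class lives in $H^2(\op{G}_p,F^+\g)$, where $F^+\g\subset\g$ denotes the subspace of upper-triangular trace-zero matrices; by local Tate duality this group is dual to an $H^0$ which vanishes under the hypotheses $\varphi_{\restriction\op{G}_{\Q_p}}\neq 1$ and $\varphi_{\restriction\op{G}_{\Q_p}}\neq\bar\chi_{\restriction\op{G}_{\Q_p}}$ from Theorem \ref{mainlifting}. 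For the tangent space, $\mathcal{N}_p$ is identified with the image of $H^1(\op{G}_p,F^+\g)$ in $H^1(\op{G}_p,\g)$; combining the local Euler characteristic formulas for $F^+\g$ (of $\F$-dimension $2$) and $\g$ (of dimension $3$) with the above vanishing yields $\dim\mathcal{N}_p=h^0(\op{G}_p,\g)+1$.

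The principal obstacle is the liftability step at $v=p$: one must simultaneously lift the triangular filtration, arrange that the determinant matches $\psi$, and ensure that the unramified characters $\delta_1,\delta_2$ can be chosen to lift compatibly at each stage. The cohomological vanishing needed to kill the obstruction at this step crucially exploits the hypotheses on $\varphi_{\restriction\op{G}_{\Q_p}}$ built into Theorem \ref{mainlifting}, and this is the technical heart of the argument.
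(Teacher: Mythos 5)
Your treatment of $v=p$ matches the paper: the ordinary deformation functor is exactly what the authors use, and the tangent space count $h^0(\op{G}_p,\g)+1$ together with the role of the hypotheses on $\varphi_{\restriction\op{G}_{\Q_p}}$ is correctly flagged. The paper itself defers the verifications to Ramakrishna \cite{RaviFM} and Patrikis \cite{PatEx}, so a sketch at this level of detail is in the right spirit there.

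The gap is at $v\in S\setminus\{p\}$: ``minimally ramified'' is not the functor the paper uses, and it is not a functor that works here. Because $\bar{\rho}$ is residually reducible, the residual ramification at $v$ can be unipotent: in the subcase the paper calls out explicitly, $\varphi_v$ is unramified and the extension class $\ast$ is ramified, so $\bar{\rho}(\tau_v)=\mtx{1}{\bar y}{0}{1}$ has order exactly $p$. Requiring a lift to factor through the \emph{same finite quotient} of inertia then forces $\varrho(\tau_v)$ to have order $p$ in $\op{GL}_2(\op{W}(\F)/p^n)$, but $\mtx{1}{y}{0}{1}^p=\mtx{1}{py}{0}{1}\neq\op{Id}$ once $n\geq 2$ and $y\not\equiv 0$; so the minimally ramified functor has no characteristic-zero points in precisely the case one needs to allow ramification. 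Moreover your liftability argument at $v\neq p$ (obstruction in $H^2(\op{G}_v/I_v,\g^{I_v})$) is the argument for prime-to-$p$ inertia image, which is exactly what fails here. The paper instead follows Ramakrishna's two-case classification: when $\varphi_v$ is ramified (so $\bar{\rho}_{\restriction v}$ is decomposable) one takes \emph{diagonal} deformations $\mtx{\psi_v\gamma}{0}{0}{\gamma^{-1}}$ with $\gamma$ unramified; when $\varphi_v$ is unramified and $\ast$ ramified one has necessarily $\varphi_v=\bar\chi_{\restriction v}$ and the functor is given by explicit matrices for $\varrho(\sigma_v)$ and $\varrho(\tau_v)$. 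These are strictly smaller than any ``all minimal lifts'' functor and are engineered so that $\dim\mathcal{N}_v=h^0(\op{G}_v,\g)$; a generic minimally-ramified count would overshoot this. You should replace the $v\neq p$ discussion with this case division, or simply cite Ramakrishna's \cite[Proposition 1]{RaviFM} as the paper does.
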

      \begin{proof}
      For the case $v\neq p$, the classification is due to Ramakrishna, see \cite[Proposition 1, Remark p.124]{RaviFM}. Since this will come up later in our construction, we recall the classification for $\mathcal{C}_v$ here. We have that $\bar{\rho}_{\restriction v}=\mtx{\varphi_v}{\ast}{0}{1}$, where $\varphi_v$ is the restriction of $\varphi$ to $\op{G}_v$. First, assume that $v\neq p$. Let $\psi_v$ denote the restriction of $\psi$ to $\op{G}_v$. As explained on \cite[p.120]{RaviFM} are two subcases to consider:
      \begin{enumerate}
          \item $\varphi_v$ is ramified at $v$, and in this case, the local residual representation $\bar{\rho}_{\restriction v}\simeq \mtx{\varphi_v}{0}{0}{1}$ decomposes into a direct sum of characters. Let $\mathcal{C}_v$ be the set of deformations of the form 
          \[\varrho=\mtx{\psi_v\gamma}{0}{0}{\gamma^{-1}}\]where $\gamma$ is an unramfied character lifting the trivial character.
          \item In the second case, $\varphi_v$ is unramified and the extension class $\ast \in H^1(\op{G}_v, \F(\varphi_v))$ is ramified. Note that 
          \[\dim H^1_{nr}(\op{G}_v, \F(\varphi_v))=\dim H^0(\op{G}_v, \F(\varphi_v)).\]On the other hand, it follows from the local Euler characteristic formula, and local duality that
          \[\begin{split}&\dim H^1(\op{G}_v, \F(\varphi_v))-\dim H^0(\op{G}_v, \F(\varphi_v))\\=&\dim H^2(\op{G}_v, \F(\varphi_v))\\
          =& \dim H^0(\op{G}_v, \F(\bar{\chi}\varphi_v^{-1})).\end{split}\]Hence, there will only exist a ramified cohomology class $\ast\in H^1(\op{G}_v, \g)$ if and only if 
          $\varphi_v=\bar{\chi}_{\restriction v}$. Note that $\bar{\rho}_{\restriction v}$ cuts out a tamely ramified extension of $\Q_v$ and factors through the maximal tamely ramified extension of $\Q_v$. This quotient is generated by Frobenius $\sigma_v$ and tame inertia generator $\tau_v$, subject to the relation $\sigma_v\tau_v \sigma_v^{-1}=\tau_v^v$. Set $\mathcal{C}_v$ to consist of deformations $\varrho$ with 
          \[\varrho(\sigma_v)=(\psi(\sigma_v)v^{-1})^{\frac{1}{2}}\mtx{v}{x}{0}{1}\text{ and }\varrho(\tau_v)=\mtx{1}{y}{0}{1},\] for suitable $x,y\in \op{W}(\F)$.
      \end{enumerate}
      The deformation condition $\mathcal{C}_p$ consists of $\varrho$ such that there is an unramified character $\gamma$ such that
      \[\varrho=\mtx{\psi_v\gamma}{\ast}{0}{\gamma^{-1}}.\]These are referred to as ordinary deformations, moreover, the tangent space $\mathcal{N}_p$ has dimension $\dim  \mathcal{N}_p=h^0(\op{G}_{p}, \g)+1$. The reader may refer to \cite[Corollory 4.5]{PatEx} for more properties of ordinary deformations.
      \end{proof} 
     \par In order to lift $\bar{\rho}$ to a characteristic zero representation, we allow the deformations to be ramified not only at the set of primes $S$, but also at a finite auxiliary set of primes $X$ disjoint from $S$. At each prime $v\in X$, Hamblen and Ramakrishna define a liftable deformation functor $\mathcal{C}_v$. However, this deformation functor $\mathcal{C}_v$ is not prorepresentable. As a result, there is no natural notion of a tangent space at $v$. However, following \cite{hamblenramakrishna}, we may specify a subspace $\mathcal{N}_v\subseteq H^1(\op{G}_v, \g)$. Before we make these definitions we introduce the notion of a \textit{highly versal pair}:
     \begin{Definition}
     Let $v$ be a prime not contained in $S$. Let $\mathcal{C}_v$ be a deformation functor and $\mathcal{N}_v$ a subspace of $H^1(\op{G}_{v}, \g)$. For an integer $m>1$, the pair $(\mathcal{C}_v, \mathcal{N}_v)$ is a highly versal pair of degree $m$ if:
     \begin{enumerate}
         \item for all integers $k\geq m$, $f\in \mathcal{N}_v$ and $\varrho\in \mathcal{C}_v(\op{W}(\F)/p^k)$, the twist $(\op{Id}+p^{k-1}f) \varrho$ is contained in $\mathcal{C}_v(\op{W}(\F)/p^k)$. In other words, the action of $\mathcal{N}_v$ on $\op{Def}_v(\op{W}(\F)/p^k)$ stabilizes $\mathcal{C}_v(\op{W}(\F)/p^k)$.
         \item The integer $m$ is the smallest for which the above property is satisfied. In other words, the action of $\mathcal{N}_v$ on $\op{Def}_v(\op{W}(\F)/p^{m-1})$ does not stabilize $\mathcal{C}_v(\op{W}(\F)/p^{m-1})$.
     \end{enumerate}
     \end{Definition}The name "highly versal" is due to Ramakrishna, as the first named author recalls from past conversations. The auxiliary primes $v$ we work with were introduced in \cite[section 4]{hamblenramakrishna} and are referred to as \textit{trivial primes}. We recall the definition here.
     \begin{Definition}\label{trivialprimes}
     A prime number $v\notin S$ is a trivial prime if:
     \begin{enumerate}
         \item $\bar{\rho}_{\restriction v}$ is the trivial representation (i.e., $\bar{\rho}_{\restriction v}(g)=\op{Id}$ for all $g\in \op{G}_v$),
         \item $v\equiv 1\mod{p}$ and $v\not\equiv 1\mod{p^2}$.
     \end{enumerate}
     \end{Definition}
     
    At a trivial prime $v$, we introduce four choices of highly versal pairs $(\mathcal{C}_v, \mathcal{N}_v)$ of degree $3$. Let $v$ be a trivial prime. All deformations of $\bar{\rho}_{\restriction v}$ factor through the Galois group of the maximal pro-$p$ extension of $\Q_v$ over $\Q_v$. This Galois group is generated by a Frobenius element $\sigma_v$ and a noncanonical generator $\tau_v$ of pro-$p$ (tame) inertia. These elements satisfy a single relation $\sigma_v \tau_v \sigma_v^{-1}=\tau_v^v$. Specifying a deformation $\varrho$ of $\bar{\rho}_{\restriction v}$ amounts to specifying elements $\varrho(\sigma_v)$ and $\varrho(\tau_v)$ in $\widehat{\op{GL}_2}$ such that $\varrho(\sigma_v) \varrho(\tau_v) \varrho(\sigma_v)^{-1}=\varrho(\tau_v)^v$. In what follows, the square root of $v$ is the square root which is $\equiv 1\mod{p}$. For $R\in \mathcal{C}_{\op{W}(\F)}$, let $\mathcal{D}_v(R)\subset \op{Def}_v(R)$ consist of deformations with a representative $\varrho:\op{G}_{v}\rightarrow \op{GL}_2(R)$ satisfying:
   \begin{equation}
       \varrho(\sigma_v)=(\psi(\sigma_v)v^{-1})^{\frac{1}{2}}\mtx{v}{x}{0}{1}\text{ and } \varrho(\tau_v)=\mtx{1}{y}{0}{1},
   \end{equation}
 where $x$ and $y$ are in the maximal ideal $\mathfrak{m}_R$ of $R$. The factor $(\psi(\sigma_v)v^{-1})^{\frac{1}{2}}$ ensures that the determinant of $\varrho(\sigma_v)$ is equal to $\psi(\sigma_v)$. Note that since $v\equiv 1\mod{p}$, $\varrho$ is a lift of $\bar{\rho}_{\restriction v}$.
 \par Note that since $v\not\equiv 1\mod{p^2}$, it follows that $(v-1)\neq 0$ when viewed as an element of $\F$. Following \cite{hamblenramakrishna}, define elements $f_1, f_2, g^{\op{nr}},g^{\op{ram}}\in H^1(\op{G}_{v}, \g)$ as follows:
\[
\begin{split}
& f_1(\sigma_v)=\mtx{0}{1}{0}{0} \text{, }f_1(\tau_v)=\mtx{0}{0}{0}{0},\\ &
f_2(\sigma_v)=\mtx{0}{0}{0}{0} \text{, }f_2(\tau_v)=\mtx{0}{1}{0}{0},\\ & g^{\operatorname{nr}}(\sigma_v)=\mtx{0}{0}{1}{0} \text{, }g^{\operatorname{nr}}(\tau_v)=\mtx{0}{0}{0}{0},\\ & g^{\operatorname{ram}}(\sigma_v)=\mtx{0}{0}{1}{0} \text{, }g^{\operatorname{ram}}(\tau_v)=\mtx{-\frac{y}{v-1}}{0}{0}{\frac{y}{v-1}}.
\end{split}\]
\par The space $\mathcal{Q}_v$ denotes the subspace of $H^1(\op{G}_{v}, \g)$ spanned by $\{f_1,f_2\}$, let $\mathcal{P}_v^{\operatorname{nr}}$ the subspace spanned by $\{f_1,f_2,g^{\operatorname{nr}}\}$ and $\mathcal{P}_v^{\operatorname{ram}}$ the subspace spanned by $\{f_1,f_2,g^{\operatorname{ram}}\}$. The functor $\mathcal{D}_v^{\op{nr}}$ (resp. $\mathcal{D}_v^{\op{ram}}$) was introduced in \cite[Proposition 24]{hamblenramakrishna} (resp. \cite[Proposition 28]{hamblenramakrishna}). The deformation functor $\mathcal{D}_v^{\op{nr}}$ consists of deformations satisfying $\mathcal{D}_v$ for which $p^2|x,y$. On the other hand, $\mathcal{D}_v^{\op{ram}}$ consists of deformations satisfying $\mathcal{D}_v$ for which $p^2|x$ and $p||y$. Suppose that $A\in \op{GL}_2(\op{W}(\F))$ is an invertible matrix, and $\varrho$ is a deformation of $\bar{\rho}_{\restriction v}$, then so is the conjugate $A \varrho A^{-1}$. This is due to $\bar{\rho}_{\restriction v}$ being the trivial representation. We define four choices of deformation functors at a trivial prime $v$.
\begin{Definition}\label{cvdeftrivial}
Let $v$ be a trivial prime and $\mathcal{D}_v^{\op{nr}}$, $\mathcal{D}_v^{\op{ram}}$, $\mathcal{P}_v^{\op{nr}}$, $\mathcal{P}_v^{\op{ram}}$ be defined as in the previous paragraph. Then consider the four pairs $(\mathcal{C}_v, \mathcal{N}_v)$:
\begin{enumerate}
    \item type 1: following \cite[Definition 26]{hamblenramakrishna}, let $\mathcal{C}_v$ (resp. $\mathcal{N}_v$) consist of conjugates by $\mtx{1}{0}{1}{1}$ of elements of $\mathcal{D}_v^{\op{nr}}$ (resp. $\mathcal{P}_v^{\op{nr}}$).
    \item type 2: following \cite[Definition 30]{hamblenramakrishna}, let $\mathcal{C}_v$ (resp. $\mathcal{N}_v$) consist of conjugates by $\mtx{0}{1}{1}{0}$ of elements of $\mathcal{D}_v^{\op{ram}}$ (resp. $\mathcal{P}_v^{\op{ram}}$).
    \item type 3: let $\mathcal{C}_v$ (resp. $\mathcal{N}_v$) denote $\mathcal{D}_v^{\op{nr}}$ (resp. $\mathcal{P}_v^{\op{nr}}$).
    \item type 4: let $\mathcal{C}_v$ (resp. $\mathcal{N}_v$) denote $\mathcal{D}_v^{\op{ram}}$ (resp. $\mathcal{P}_v^{\op{ram}}$).
\end{enumerate}
We shall impose the conditions $(\mathcal{C}_v, \mathcal{N}_v)$ for which the type may range from $1$ to $4$ depending on the specific application.

\begin{Lemma}\label{highlyversallemma}
Let $v$ be a trivial prime and $(\mathcal{C}_v, \mathcal{N}_v)$ of one of the types $(1)-(4)$. Then, it is a highly versal pair of degree $3$.

\end{Lemma}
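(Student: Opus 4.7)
The plan is to verify the two defining conditions of a highly versal pair of degree $3$: that the action of $\mathcal{N}_v$ on $\op{Def}_v(\op{W}(\F)/p^k)$ stabilizes $\mathcal{C}_v(\op{W}(\F)/p^k)$ for every $k \geq 3$, and that this stabilization fails at $k = 2$. Since types (1) and (2) are obtained from types (3) and (4) by global conjugation by the fixed matrices $\bigl(\begin{smallmatrix} 1 & 0 \\ 1 & 1 \end{smallmatrix}\bigr)$ and $\bigl(\begin{smallmatrix} 0 & 1 \\ 1 & 0 \end{smallmatrix}\bigr)$ in $\op{GL}_2(\op{W}(\F))$, and global conjugation by a fixed matrix commutes with the twist operation while preserving the deformation condition, it suffices to handle types (3) and (4) directly.

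For type (3), fix $\varrho \in \mathcal{D}_v^{\op{nr}}(\op{W}(\F)/p^k)$ with $k \geq 3$, so $\varrho(\sigma_v) = u \bigl(\begin{smallmatrix} v & x \\ 0 & 1 \end{smallmatrix}\bigr)$ and $\varrho(\tau_v) = \bigl(\begin{smallmatrix} 1 & y \\ 0 & 1 \end{smallmatrix}\bigr)$ with $p^2 \mid x, y$, where $u = (\psi(\sigma_v) v^{-1})^{1/2}$. Twisting by $f_1$ or $f_2$ simply shifts $x$ or $y$ by $p^{k-1}$; since $k-1 \geq 2$, the condition $p^2 \mid x, y$ is preserved. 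Twisting by $g^{\op{nr}}$ introduces a nonzero $(2,1)$-entry $u p^{k-1} v$ in $\varrho(\sigma_v)$ while leaving $\varrho(\tau_v)$ unchanged; I would kill this entry by conjugating by $B = \op{Id} + \bigl(\begin{smallmatrix} 0 & 0 \\ b & 0 \end{smallmatrix}\bigr)$. Writing $v - 1 = p w$ with $w \in \op{W}(\F)^\times$ (using $v \not\equiv 1 \pmod{p^2}$), a short computation gives $b \equiv -p^{k-2} v w^{-1} \pmod{p^k}$, which lies in the maximal ideal precisely when $k \geq 3$. The divisibility $p^2 \mid x, y$ together with the valuation $k-2$ of $b$ forces each cross-term $bx, by, b^2 y$, and $b p^{k-1} x$ to vanish modulo $p^k$, so the conjugated representation agrees with $\varrho$ modulo $p^k$ and remains in $\mathcal{D}_v^{\op{nr}}$. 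Type (4) follows the same template, but the twist by $g^{\op{ram}}$ simultaneously modifies $\varrho(\tau_v)$ to have diagonal $(1 \mp p^{k-1} y/(v-1))$; here the strict equivalence must reconcile the changes in both $\varrho(\sigma_v)$ and $\varrho(\tau_v)$. Solving the four entrywise equations $B \tilde{\varrho}(g) = \varrho'(g) B$ directly, using $p \| y$ and $v - 1 = p w$, yields a $B \in \widehat{\op{GL}_2}$ with $B_{21}$ of $p$-adic valuation exactly $k - 2$ and a new $y''$ of $p$-adic valuation exactly $1$, placing the conjugated representation in $\mathcal{D}_v^{\op{ram}}$.

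To see that degree $3$ is tight, I would take $\varrho$ to be the minimal deformation with $x = y = 0$ in $\op{W}(\F)/p^2$ and twist by $f_1$; the new $(1,2)$-entry of $\varrho(\sigma_v)$ is $p$, whereas the condition $p^2 \mid x$ demands this entry to be zero modulo $p^2$. A direct check shows that conjugation by any $\op{Id} + B'$ with $B' \in p \cdot M_2(\op{W}(\F)/p^2)$ alters the $(1,2)$-entry only by an element of $p^2 \op{W}(\F) = 0$ modulo $p^2$, so no strict equivalence can remove this obstruction, confirming that the stabilization fails at $k = 2$. The hard part will be the type-(4) computation: because $g^{\op{ram}}(\tau_v)$ depends on $y$ itself and produces diagonal rather than merely off-diagonal changes, verifying that a single strict-equivalence matrix simultaneously restores both $\varrho(\sigma_v)$ and $\varrho(\tau_v)$ to the form of $\mathcal{D}_v^{\op{ram}}$ while preserving $p \| y$ requires careful bookkeeping of $p$-adic valuations across all four matrix entries.
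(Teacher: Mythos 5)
The paper's proof of this lemma is a one-line citation to Corollaries 25 and 29 of Hamblen--Ramakrishna, whereas you re-derive those facts from scratch. Your reduction of types (1), (2) to types (3), (4) by a fixed global conjugation is valid (such a conjugation is a bijection on deformations that commutes with the twist action, so it transports the highly-versal property), and your type-(3) computation is correct: twisting by $f_1$ or $f_2$ merely shifts $x$ or $y$ by $p^{k-1}$, while twisting by $g^{\op{nr}}$ puts $u p^{k-1}v$ in the $(2,1)$-slot, and conjugating by $B = \op{Id} + \bigl(\begin{smallmatrix} 0 & 0 \\ b & 0\end{smallmatrix}\bigr)$ with $b$ of valuation $k-2$ (using $v-1 = pw$, $w$ a unit) kills it, with the residual cross-terms (including $b^2 x$, which you omit from your list but which has valuation $\geq 2k-2 \geq k$) all vanishing mod $p^k$; the same $B$ acts trivially on $\varrho(\tau_v)$ mod $p^k$. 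You correctly note $B \in \widehat{\op{GL}}_2$ requires $k \geq 3$. Your sharpness argument -- that at level $p^2$ twisting by $f_1$ produces $x = p$, which violates $p^2 \mid x$ and cannot be repaired because $\widehat{\op{GL}}_2$-conjugation is the identity mod $p^2$ -- is also right. Your type-(4) sketch is plausible but not completed, as you acknowledge; since the paper delegates all of this to Hamblen--Ramakrishna, you would need to finish the $\mathcal{D}_v^{\op{ram}}$ bookkeeping to make the proof fully self-contained. Net, this is a more elementary and transparent route that trades brevity (a citation) for an explicit verification.
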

\begin{proof}
The result is a consequence of \cite[Corollary 25,29]{hamblenramakrishna}.
\end{proof}
\begin{Lemma}
Let $v$ be a trivial prime and $(\mathcal{C}_v, \mathcal{N}_v)$ of one of the types $(1)-(4)$. Then, $\mathcal{C}_v$ is a liftable deformation functor and $\op{dim}\mathcal{N}_v=h^0(\op{G}_v, \g)$.
\end{Lemma}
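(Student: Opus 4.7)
The plan has two parts: computing $\dim \mathcal{N}_v$ and verifying liftability of $\mathcal{C}_v$ in each of the four types. Types~(1) and~(2) are obtained from types~(3) and~(4) by conjugation by fixed matrices in $\op{GL}_2(\op{W}(\F))$, and conjugation by a fixed invertible matrix preserves both the tangent space dimension (it acts as an isomorphism on $H^1(\op{G}_v,\g)$) and liftability. So it suffices to treat types~(3) and~(4).

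For the tangent space dimension, I would first observe that since $v$ is a trivial prime, $\bar{\rho}_{\restriction v}$ is the trivial representation. By Definition~\ref{adgaloisaction}, the Galois action on $\g$ is then trivial, so $\g$ is a three-dimensional $\F$-vector space with trivial $\op{G}_v$-action. In particular $h^0(\op{G}_v, \g)=3$, and the coboundaries $B^1(\op{G}_v,\g)$ vanish, so cocycles are linearly independent in $H^1$ precisely when they are linearly independent as maps. For type~(3) the set $\{f_1, f_2, g^{\op{nr}}\}$ gives values $(e_{12},0), (0,e_{12}), (e_{21},0)$ at $(\sigma_v, \tau_v)$ inside $\g\oplus\g$, which are manifestly independent. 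For type~(4) the set $\{f_1, f_2, g^{\op{ram}}\}$ gives $(e_{12},0),(0,e_{12}),(e_{21},\op{diag}(-\tfrac{1}{v-1},\tfrac{1}{v-1}))$, again independent (note $v\not\equiv 1\pmod{p^2}$ makes the diagonal entries well-defined and nonzero). Hence $\dim \mathcal{N}_v = 3 = h^0(\op{G}_v,\g)$ in both cases.

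For liftability, let $R\to R/I$ be a small extension and let $\bar{\varrho}\in \mathcal{C}_v(R/I)$. Each deformation in $\mathcal{D}_v^{\op{nr}}$ or $\mathcal{D}_v^{\op{ram}}$ is determined up to strict equivalence by the single pair $(x,y)\in \mathfrak{m}_R\times\mathfrak{m}_R$ appearing in the matrix formulas for $\varrho(\sigma_v)$ and $\varrho(\tau_v)$, subject respectively to $p^2\mid x,y$ (for type~(3)) or $p^2\mid x$ and $p\mathrel\Vert y$ (for type~(4)). Given a representative $\bar{\varrho}$ with parameters $(\bar{x},\bar{y})\in (R/I)^2$, write $\bar{x}=p^2\bar{w}$, and either $\bar{y}=p^2\bar{z}$ (type~(3)) or $\bar{y}=p\bar{u}$ with $\bar{u}$ a unit in $R/I$ (type~(4)). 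Lift $\bar{w},\bar{z}\in R/I$ to arbitrary elements $w,z\in R$, and lift $\bar{u}$ to a unit $u\in R$, which is possible because $R$ is a local ring and units lift along surjections of local rings. Setting $x=p^2 w$ and $y=p^2 z$ or $y=pu$ respectively produces a lift $(x,y)\in R^2$ of $(\bar x,\bar y)$ satisfying the required divisibility conditions, and the resulting $\varrho$ lies in $\mathcal{C}_v(R)$ and reduces to $\bar{\varrho}$.

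The only subtle point is checking that the divisibility condition $p\mathrel\Vert y$ in type~(4) genuinely lifts, since this is a \emph{strict} condition rather than simply a divisibility; however, this reduces to the fact that $p$-adic units lift to $p$-adic units along any surjection in $\mathcal{C}_{\op{W}(\F)}$, which is automatic for surjections of local rings with the same residue field. No obstruction arises, so the lifting map $\mathcal{C}_v(R)\to \mathcal{C}_v(R/I)$ is surjective, completing the proof.
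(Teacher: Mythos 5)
Your proof is correct and follows the same approach the paper has in mind, simply filling in the details that the paper explicitly leaves to the reader ("It is easy to show that $\mathcal{D}_v^{\op{nr}}$ and $\mathcal{D}_v^{\op{ram}}$ are both liftable deformation functors, we leave this to the reader"). The reduction of types (1),(2) to types (3),(4) by conjugation, the observation that trivial Galois action on $\g$ makes $H^1(\op{G}_v,\g)$ a space of homomorphisms with no coboundaries so that linear independence can be read off the values at $\sigma_v$ and $\tau_v$, and the lifting of the parameter pair $(x,y)$ along a small extension (using that units lift in local rings) are exactly the intended verifications.
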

\begin{proof}
Since $\bar{\rho}_{\restriction v}$ is the trivial representation, $h^0(\op{G}_v, \g)=3$, on the other hand, $\dim \mathcal{N}_v=3$ as well. It is easy to show that $\mathcal{D}_v^{\op{nr}}$ and $\mathcal{D}_v^{\op{ram}}$ are both liftable deformation functors, we leave this to the reader. It follows from this that $\mathcal{C}_v$ is liftable as well.
\end{proof}
\end{Definition}

     Let $X$ be a finite set of trivial primes disjoint from $S$ and assume that at each prime $v\in X$ we have chosen one of the four choices for $(\mathcal{C}_v, \mathcal{N}_v)$. For $v\in S\cup X$, set $\mathcal{N}_v^{\perp}\subseteq H^1(\op{G}_v,\g^*)$ to be the orthogonal complement of $\mathcal{N}_v$ with respect to the non-degenerate local Tate pairing 
\[H^1(\op{G}_v, \g)\times H^1(\op{G}_v, \g^*)\rightarrow H^2(\op{G}_v, \F(\bar{\chi}))\xrightarrow{\sim}\F.\]
\begin{Definition}The \textit{Selmer condition} $\mathcal{N}$ is the tuple $\{\mathcal{N}_v\}_{v\in S\cup X}$ and the \textit{dual Selmer condition} $\mathcal{N}^{\perp}$ is $\{\mathcal{N}_v^{\perp}\}_{v\in S\cup X}$. Attached to $\mathcal{N}$ and $\mathcal{N}^{\perp}$ are the \textit{Selmer and dual-Selmer groups} defined as follows:
\begin{equation}\begin{split}
    & H^1_{\mathcal{N}}(\op{G}_{\Q,S\cup X}, \g)\\
    :=& \text{ker}\left\{ H^1(\operatorname{G}_{\Q, S\cup X}, \g)\xrightarrow{\operatorname{res}_{S\cup X}} \bigoplus_{v\in S\cup X} \frac{H^1(\op{G}_v, \g)}{\mathcal{N}_v}\right\}\\
    & H^1_{\mathcal{N}^{\perp}}(\op{G}_{\Q,S\cup X}, \g^*)\\
    :=& \text{ker}\left\{ H^1(\operatorname{G}_{\Q, S\cup X}, \g^{*})\xrightarrow{\op{res}_{S\cup X}'} \bigoplus_{v\in S\cup X} \frac{H^1(\op{G}_v, \g^*)}{\mathcal{N}_v^{\perp}}\right\}
\end{split}\end{equation}
      respectively. 
      \end{Definition}
      The Selmer and dual Selmer groups fit a Poitou-Tate sequence. We only point out that the cokernel of $\op{res}_{S\cup X}$ injects into $H^1_{\mathcal{N}^{\perp}}(\op{G}_{\Q,S\cup X}, \g^*)^{\vee}$. In particular, if the dual Selmer group is zero, the restriction map $\operatorname{res}_{S\cup X}$ is surjective. Since the spaces $\mathcal{N}_v$ at a trivial prime $v$ stabilize lifts only past mod $p^3$, it becomes necessary to produce a mod $p^3$ lift $\rho_3$ of $\bar{\rho}$ before applying the general lifting-strategy.
      
      \section{Modular deformations with large $\mu$-invariant}\label{section 6}
      \par In this section, Theorem $\ref{th51}$ is proved. Let $\bar{\rho}$ be a reducible Galois representation satisfying the conditions of $\ref{th51}$. As mentioned in the previous section, we may assume without loss of generality that $\bar{\rho}=\mtx{\varphi}{\ast}{0}{1}$. Let $\mathfrak{n}$ and $\mathfrak{b}$ denote the unipotent matrices $\mtx{0}{\ast}{0}{0}$ and upper triangular matrices $\mtx{\ast}{\ast}{0}{\ast}$ in $\g$ respectively. Note that since the image of $\bar{\rho}$ consists of upper triangular matrices, both $\mathfrak{n}$ and $\mathfrak{b}$ are $\op{G}_{\Q}$-stable submodules. For a character $\kappa:\op{G}_{\Q}\rightarrow \op{GL}_1(\op{W}(\F))$, let $\kappa_n$ denote the mod-$p^n$ reduction of $\kappa$. Note that $\kappa_1$ coincides with the mod-$p$ reduction $\bar{\kappa}$. 
      \par Set $\mathcal{M}$ to denote the modules $\{\mathfrak{n}, \mathfrak{b}, \g, \mathfrak{n}^*, \mathfrak{b}^*, \g^*\}$. For $M\in \mathcal{M}$, and a set of primes $Z$ containing $S$, denote by 
      \[\Sha_Z^i(M):=\op{ker}\left\{ H^i(\op{G}_{\Q, Z}, M)\rightarrow \bigoplus_{v\in Z} H^i(\op{G}_v, M)\right\}.\]Let $X$ be a finite set of primes disjoint from $S$, define a \textit{relative $\Sha$-group} as follows
      \[\Sha^i_{X,S\cup X}( M):=\op{ker}\left\{ H^i(\op{G}_{\Q, S\cup X}, M)\rightarrow \bigoplus_{v\in X} H^i(\op{G}_v, M)\right\}.\] Note that we have an exact sequence
      \[0\rightarrow \Sha^i_{S\cup X} (M)\rightarrow \Sha^i_{X,S\cup X} (M)\rightarrow \bigoplus_{v\in S} H^i(\op{G}_v, M).\]
      We shall first add a finite number of trivial primes to the level, so that certain $\Sha$-groups vanish. It will then be shown that certain globally defined cohomology classes vanish because they lie in these vanishing $\Sha$-groups. Recall that a prime $v\notin S$ is a trivial prime if it satisfies the conditions of Definition $\ref{trivialprimes}$.
      \begin{Proposition}\label{prop61} There is a finite set of trivial primes $X_0$ (disjoint from $S$) such that $\Sha^1_{X_0, S\cup X_0}(M)=0$ for all $M\in \mathcal{M}$. 
      \end{Proposition}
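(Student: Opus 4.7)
First I would reduce $\Sha^1_{X_0, S\cup X_0}(M) = 0$ to a statement purely about $H^1(\op{G}_{\Q, S}, M)$. Any class $\phi \in \Sha^1_{X_0, S\cup X_0}(M)$ has trivial local restriction at every $v\in X_0$, hence is in particular unramified at each such $v$. The maximal subextension of $\Q_{S\cup X_0}$ that is unramified at every $v\in X_0$ is precisely $\Q_S$, so by the inflation-restriction sequence (with injective inflation) $\phi$ is the inflation of a unique class in $H^1(\op{G}_{\Q, S}, M)$. Therefore $\Sha^1_{X_0, S\cup X_0}(M)$ embeds into the subgroup of $H^1(\op{G}_{\Q, S}, M)$ consisting of classes that restrict trivially to $H^1(\op{G}_v, M)$ for every $v\in X_0$.

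Since each $M\in \mathcal{M}$ is finite and $\op{G}_{\Q, S}$ has finite cohomology with finite coefficients, $H^1(\op{G}_{\Q, S}, M)$ is a finite set. Let $\Phi_M$ be its set of nonzero elements, and set $\Phi := \bigcup_{M\in \mathcal{M}} \Phi_M$, a finite set. For each $\phi \in \Phi$ the plan is to construct a trivial prime $v(\phi)\notin S$ at which $\phi_{\restriction v(\phi)} \neq 0$. Taking $X_0 := \{v(\phi) : \phi\in \Phi\}$, every nonzero class in every $H^1(\op{G}_{\Q, S}, M)$ has a witness in $X_0$, so by the reduction above $\Sha^1_{X_0, S\cup X_0}(M) = 0$ for every $M\in \mathcal{M}$.

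The prime $v(\phi)$ is produced by a Chebotarev density argument. Fix $\phi \in H^1(\op{G}_{\Q, S}, M)\setminus \{0\}$, let $K$ denote the fixed field of $\op{ker}(\bar{\rho})$, and let $K(\phi)/K$ be the finite Galois extension cut out by the restriction of $\phi$ to $\op{Gal}(\bar{\Q}/K)$, viewed as a $\op{Gal}(K/\Q)$-equivariant homomorphism to $M$. I would search for a prime $v$, unramified in the compositum $L := K(\phi)\cdot \Q(\zeta_{p^2})$, whose Frobenius $\sigma_v\in \op{Gal}(L/\Q)$ satisfies: (i) $\sigma_v|_K = 1$, so that $\bar{\rho}_{\restriction v}$ is trivial; (ii) $\sigma_v$ acts on $\zeta_{p^2}$ by $\zeta_{p^2}\mapsto \zeta_{p^2}^{1+pu}$ for some $u\not\equiv 0\pmod p$, so that $v\equiv 1\pmod p$ but $v\not\equiv 1\pmod{p^2}$; (iii) $\phi(\sigma_v)\neq 0$ in $M$. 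Once such a $\sigma_v$ exists in $\op{Gal}(L/\Q)$, Chebotarev yields infinitely many primes $v$ with that Frobenius conjugacy class, and any such $v$ is a trivial prime at which $\phi_{\restriction v}\neq 0$.

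The main obstacle is verifying that (i)--(iii) are jointly achievable in $\op{Gal}(L/\Q)$, i.e., that the extensions $K$, $\Q(\zeta_{p^2})$, and $K(\phi)/K$ are sufficiently linearly disjoint. The running hypotheses of Theorem \ref{mainlifting} --- in particular $p\geq 5$, $\varphi^2\neq 1$, $\varphi\neq \bar{\chi}^{-1}$, and $\varphi_{\restriction \op{G}_{\Q_p}}\neq 1, \bar{\chi}_{\restriction \op{G}_{\Q_p}}$ --- are precisely what is needed to rule out the possible obstructions: they ensure $K$ is linearly disjoint from $\Q(\zeta_{p^2})$ over $\Q$, and that for each module $M\in \mathcal{M}$ no $\op{Gal}(K/\Q)$-isotypic component of $M$ relevant to $\phi$ can coincide with a character cut out by the cyclotomic tower inside $K(\phi)$. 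Granting this disjointness, $\phi|_{\op{Gal}(K(\phi)/K)}$ is a nonzero $\op{Gal}(K/\Q)$-equivariant homomorphism to $M$, so its image contains a nonzero vector, which one can arrange as $\phi(\sigma_v)$ while independently prescribing $\sigma_v|_{\Q(\zeta_{p^2})}$ as in (ii) and $\sigma_v|_K = 1$ as in (i). Taking the finite union of the primes $v(\phi)$ over $\phi\in \Phi$ then produces the required finite set $X_0$ of trivial primes disjoint from $S$.
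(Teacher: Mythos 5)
Your strategy is the same one underlying the paper's proof, which simply cites \cite[Proposition 13]{hamblenramakrishna}: reduce the vanishing of the relative $\Sha$-group to producing, for each of the finitely many nonzero classes $\phi\in H^1(\op{G}_{\Q,S},M)$, a trivial prime $v$ at which $\phi$ restricts nontrivially, and find such primes by Chebotarev. Your preliminary reduction --- observing that any class in $\Sha^1_{X_0,S\cup X_0}(M)$ is unramified at every $v\in X_0$, hence inflates from $H^1(\op{G}_{\Q,S},M)$, so that killing the relative $\Sha$-group amounts to witnessing every nonzero class of $H^1(\op{G}_{\Q,S},M)$ locally at some $v\in X_0$ --- is clean and also makes transparent why the \emph{relative} $\Sha$-group vanishes, not just $\Sha^1_{S\cup X_0}(M)$; this is exactly what the paper asserts ``follows verbatim'' from \cite{hamblenramakrishna}.

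The gap is precisely the step you flag and then wave away. You need a single element $\sigma_v\in\op{Gal}(L/\Q)$ satisfying (i) $\sigma_v|_K=1$, (ii) $v\equiv 1\pmod p$ but $v\not\equiv 1\pmod{p^2}$, and (iii) $\phi(\sigma_v)\neq 0$ simultaneously, and you assert that the hypotheses on $\varphi$ give the ``necessary linear disjointness.'' This is where all of the actual work of \cite[Proposition 13]{hamblenramakrishna} lives, and it is not a one-line deduction: $K$ and $\Q(\zeta_{p^2})$ may well share $\Q(\mu_p)$, and one must analyze separately whether, for the cut-out extension $K(\phi)/K$, the relevant $\op{Gal}(K/\Q)$-isotypic piece of $M$ (for $M$ running through $\mathfrak{n},\mathfrak{b},\g$ and, crucially, their Cartier duals $\mathfrak{n}^*,\mathfrak{b}^*,\g^*$, where the extra $\bar\chi$-twist creates exactly the dangerous coincidences) forces $K(\phi)$ into the compositum $K\cdot\Q(\mu_{p^2})$. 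In each such degenerate case one has to check by hand that (ii) and (iii) are nevertheless compatible, using the nondegeneracy hypotheses ($\varphi^2\neq 1$, $\varphi\neq\bar\chi^{-1}$, indecomposability of $\bar\rho$, and so on) module by module. Your sketch correctly locates the difficulty and has the right architecture, but it treats as granted the lemma that constitutes essentially the entire content of the result being invoked; as written, the claim ``they ensure $K$ is linearly disjoint from $\Q(\zeta_{p^2})$ over $\Q$'' is not established and is not in general the precise form of what the hypotheses deliver.
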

      \begin{proof}
      The result \cite[Proposition 13]{hamblenramakrishna} asserts that $\Sha^1_{S\cup X_0}(M)=0$ for all $M\in \mathcal{M}$. It follows verbatim from the argument that the set of primes $X_0$ can be chosen so that $\Sha^1_{X_0,S\cup X_0}(M)=0$ for all $M\in \mathcal{M}$. 
      \end{proof}
      \begin{Corollary}\label{cor62}
      Let $X_0$ be the set of primes as in Proposition $\ref{prop61}$, and $M\in \{\mathfrak{n}, \mathfrak{b}, \g\}$. Then, the localization map 
          \begin{equation}H^1(\op{G}_{\Q,S\cup X_0} M)\rightarrow \bigoplus_{v\in S_0} H^1(\op{G}_v, M)\end{equation} is surjective and $\Sha^2_{S\cup X_0}(M)=0$.
      \end{Corollary}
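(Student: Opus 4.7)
The plan is to deduce both assertions from Poitou-Tate global duality, in the refined form due to Greenberg and Wiles that accommodates arbitrary local conditions. The set $X_0$ was engineered precisely so that the auxiliary $\Sha$-group $\Sha^1_{X_0,\,S\cup X_0}(M^{\ast})$ vanishes for every $M^{\ast}$ appearing in $\mathcal{M}$, and this vanishing will drive both conclusions.

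For the vanishing of $\Sha^2_{S\cup X_0}(M)$, I would invoke the classical Poitou-Tate duality isomorphism
$$\Sha^2_{S\cup X_0}(M)\;\cong\;\Sha^1_{S\cup X_0}(M^{\ast})^{\vee}.$$
The obvious inclusion $\Sha^1_{S\cup X_0}(M^{\ast})\subseteq \Sha^1_{X_0,\,S\cup X_0}(M^{\ast})$ combined with Proposition $\ref{prop61}$ forces the right-hand side to be zero, so $\Sha^2_{S\cup X_0}(M)=0$. Here one uses that $M\in\{\mathfrak{n},\mathfrak{b},\g\}$ implies $M^{\ast}\in\{\mathfrak{n}^{\ast},\mathfrak{b}^{\ast},\g^{\ast}\}\subseteq \mathcal{M}$, so Proposition $\ref{prop61}$ is indeed available.

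For the surjectivity of the localisation map, I would set up a Wiles dual Selmer system with local conditions $L_v=0$ at every $v\in S$ and $L_v=H^1(\op{G}_v,M)$ at every $v\in X_0$; the orthogonal complements under local Tate duality are then $L_v^{\perp}=H^1(\op{G}_v,M^{\ast})$ for $v\in S$ and $L_v^{\perp}=0$ for $v\in X_0$. The dual Selmer group attached to this system is therefore exactly the relative $\Sha$-group $\Sha^1_{X_0,\,S\cup X_0}(M^{\ast})$, and the Greenberg-Wiles exact sequence takes the form
$$H^1(\op{G}_{\Q,\,S\cup X_0},M)\;\longrightarrow\;\bigoplus_{v\in S}H^1(\op{G}_v,M)\;\longrightarrow\;\Sha^1_{X_0,\,S\cup X_0}(M^{\ast})^{\vee}.$$
Since the right-hand term vanishes by Proposition $\ref{prop61}$, the desired surjectivity follows.

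The only mildly subtle point — which is really bookkeeping rather than a serious obstacle — is choosing the local conditions $L_v$ so that the resulting dual Selmer group coincides with the particular relative $\Sha$-group already killed by the auxiliary primes in $X_0$. Once that identification is made, both conclusions become immediate formal consequences of duality, and no further input is needed beyond the conclusion of Proposition $\ref{prop61}$.
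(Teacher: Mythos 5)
Your proposal is correct and matches the paper's proof: you choose the same Selmer conditions ($L_v=0$ on $S$, everything on $X_0$), identify the resulting dual Selmer group with $\Sha^1_{X_0,S\cup X_0}(M^*)$, and invoke the Poitou--Tate/Greenberg--Wiles sequence together with Proposition \ref{prop61} for the surjectivity, then use global duality plus the inclusion $\Sha^1_{S\cup X_0}(M^*)\subseteq \Sha^1_{X_0,S\cup X_0}(M^*)$ for the vanishing of $\Sha^2$. The only difference is presentational — you treat the two claims in the opposite order from the paper.
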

      \begin{proof}
      We define Selmer data $\mathcal{L}$ on the set of primes $S\cup X_0$ as follows 
      \[\mathcal{L}_v=\begin{cases}H^1(\op{G}_v, M) &\text{ if }v\in X_0,\\
      0 &\text{ if }v\in S_0.\\
      \end{cases}\]
      It is easy to see that the dual Selmer data is defined by 
      \[\mathcal{L}_v^{\perp}=\begin{cases}0 &\text{ if }v\in X_0,\\
      H^1(\op{G}_v, M^*) &\text{ if }v\in S_0.\\
      \end{cases}\]
      Consider the restriction map 
      \[\op{res}: H^1(\op{G}_{S\cup X_0}, M)\rightarrow \bigoplus_{v\in S\cup X_0} H^1(\op{G}_v, M)/\mathcal{L}_v.\] The Selmer and dual Selmer groups fit into a five-term Poitou-Tate sequence (see the proof of \cite[Lemma 1.1]{taylor}). We need only point out that the cokernel of $\op{res}$ injects into $H^1_{\mathcal{L}^{\perp}}(\op{G}_{\Q,S\cup X_0}, M^*)^{\vee}$.
      Note that 
      \[\bigoplus_{v\in S_0\cup X_0} H^1(\op{G}_v, M)/\mathcal{L}_v=\bigoplus_{v\in S_0} H^1(\op{G}_v, M)\] and that $H^1_{\mathcal{L}^{\perp}}(\op{G}_{\Q, S\cup X}, M^*)$ coincides with $\Sha^1_{X_0, S\cup X_0}(M)$. Note that the set of primes $X_0$ is chosen so that this group is equal to zero. Therefore, the restriction map
      \[\op{res}:H^1(\op{G}_{\Q, S\cup X_0}, M)\rightarrow \bigoplus_{v\in S_0} H^1(\op{G}_v, M)\] is surjective. On the other hand, by the well-known global duality theorem for $\Sha$-groups, $\Sha^2_{S\cup X_0}(M)$ is dual to $\Sha^1_{S\cup X_0}(M^*)$. The set of primes $X_0$ is chosen so that $\Sha^1_{X_0, S\cup X_0}(M^*)=0$. Since $\Sha^1_{X_0, S\cup X_0}(M^*)=0$, so is $\Sha^1_{S\cup X_0}(M^*)$.
      
      \end{proof}
      Write $\varphi$ as a product $\bar{\chi}\eta$, where we recall that $\bar{\chi}$ is the mod-$p$ cyclotomic character. Recall that $\psi$ is the chosen lift of $\varphi$ given by $\psi=\chi^{k-1} \tilde{\eta}$, where $\tilde{\eta}$ is the Teichm\"uller lift of $\eta$. For $n\geq 1$, let $\psi_n$ denote the reduction of $\psi$ modulo $p^n$. Recall that $N$ is the integer specified in Theorem $\ref{mainlifting}$. At each prime $v\in S$, the deformation condition $\mathcal{C}_v$ is as specified in Proposition $\ref{existencecv}$. On the other hand, there are four possible choices of deformation functors $\mathcal{C}_v$ for $v\in X_0$, see Definition $\ref{cvdeftrivial}$. In the result below, the deformation condition will be taken to be of type (3) or (4) depending on whether the lifts of $\bar{\rho}_{\restriction v}$ are unramified or ramified modulo $p^2$ respectively. For a coefficient ring let $\op{U}(R)$ denote the subgroup of $\op{GL}_2(R)$ consisting of matrices of the form $\mtx{1}{\ast}{0}{1}$. Recall that $\mathfrak{n}$ consists of the strictly upper triangular matrices in $\g$. Denote by $\iota$ and $\iota_v$ the natural maps
      \[\iota: H^1(\op{G}_{\Q}, \mathfrak{n})\rightarrow H^1(\op{G}_{\Q}, \g)\]
      \[\iota_v: H^1(\op{G}_{v}, \mathfrak{n})\rightarrow H^1(\op{G}_{v}, \g)\]induced by the inclusion $\mathfrak{n}\hookrightarrow \g$.
     
 \begin{Proposition}\label{prop63}
 Let $X_0$ be the set of primes chosen in Proposition $\ref{prop61}$. Then, the residual representation $\bar{\rho}$ lifts to a mod-$p^N$ Galois representation \[\rho_N:\op{G}_{\Q, S\cup X_0}\rightarrow \op{GL}_2(\op{W}(\F)/p^N)\] satisfying the following conditions:
          \begin{itemize}
              \item $\rho_N=\mtx{\psi_N}{\ast}{0}{1}$
              \item $\rho_N$ satisfies the condition $\mathcal{C}_v$ at each prime $v\in S\cup X_0$.
              \end{itemize}
             Set $\rho_2$ to be $\rho_N\mod{p^2}$. Then for $v\in X_0$, the pair $(\mathcal{C}_v, \mathcal{N}_v)$ is type (3) if $\rho_2$ is unramified at $v$ and type (4) if $\rho_2$ is ramified at $v$.
 \end{Proposition}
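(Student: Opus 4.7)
The plan is to construct $\rho_N$ by an inductive obstruction-theoretic argument, lifting $\rho_n$ to $\rho_{n+1}$ one power of $p$ at a time, starting from $\rho_1 = \bar\rho$. At each stage we preserve the upper triangular form with diagonal $(\psi_n, 1)$ and the local conditions $\mathcal{C}_v$ at every $v \in S \cup X_0$. Since the choice of type for the pair $(\mathcal{C}_v, \mathcal{N}_v)$ at a trivial prime depends on whether $\rho_2|_v$ is ramified, the argument is bootstrapped: first produce $\rho_2$ (imposing only the upper triangular shape at trivial primes), then fix the types, then run the induction for $n \geq 2$.

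For the base step, the obstruction to producing a mod-$p^2$ upper triangular lift with diagonal $(\psi_2, 1)$ lies in $H^2(\op{G}_{\Q, S \cup X_0}, \mathfrak{n})$. Liftability of $\mathcal{C}_v$ at each $v \in S$ (Proposition~\ref{existencecv}) provides local lifts whose upper triangular parts kill the local obstruction at $S$; at trivial primes the local obstruction vanishes for free because $\bar\rho_{\restriction v}$ is the trivial representation and the local Galois group is generated by $\sigma_v, \tau_v$ with a single tame relation. By $\Sha^2_{S \cup X_0}(\mathfrak{n}) = 0$ from Corollary~\ref{cor62}, the global obstruction then vanishes. Using the surjectivity part of Corollary~\ref{cor62}, twist the resulting upper triangular lift by a class in $H^1(\op{G}_{\Q, S \cup X_0}, \mathfrak{n})$ to match the prescribed local lifts at $v \in S$. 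This yields $\rho_2$. For each $v \in X_0$, set the type of $(\mathcal{C}_v, \mathcal{N}_v)$ to be $(3)$ if $\rho_2|_v$ is unramified and $(4)$ if it is ramified; direct inspection of $\mathcal{D}_v^{\op{nr}}$ or $\mathcal{D}_v^{\op{ram}}$ shows that $\rho_2|_v$ then lies in $\mathcal{C}_v(\op{W}(\F)/p^2)$.

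For the inductive step with $n \geq 2$, I would repeat the same three moves. First, at each $v \in S \cup X_0$ use liftability of $\mathcal{C}_v$ (Proposition~\ref{existencecv} for $v \in S$, the liftability of the type-$(3)$ and type-$(4)$ functors for $v \in X_0$) to produce an upper triangular local lift $\tilde\varrho_v$ of $\rho_n|_v$ satisfying $\mathcal{C}_v$. Second, observe that the global obstruction in $H^2(\op{G}_{\Q, S\cup X_0}, \mathfrak{n})$ vanishes at every prime of $S \cup X_0$ by the existence of the $\tilde\varrho_v$, hence vanishes globally by $\Sha^2_{S\cup X_0}(\mathfrak{n}) = 0$, yielding a global upper triangular lift $\tilde\rho_{n+1}$. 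Third, correct $\tilde\rho_{n+1}$ to satisfy $\mathcal{C}_v$ everywhere: the local discrepancies form a class in $\bigoplus_{v \in S \cup X_0} H^1(\op{G}_v, \mathfrak{n})$ modulo the $\mathcal{N}_v$, which is realized by a global class via the surjectivity in Corollary~\ref{cor62}, and twisting by this class produces $\rho_{n+1}$.

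The main technical obstacle is the final correction step: the simultaneous local adjustments must be realized by a single global cohomology class valued in $\mathfrak{n}$ (rather than in $\g$), for otherwise the upper triangular shape is destroyed. This is precisely what the choice of $X_0$ delivers, since $\Sha^1_{X_0, S \cup X_0}(\mathfrak{n}^*) = 0$ from Proposition~\ref{prop61} gives, through the Poitou-Tate mechanism of Corollary~\ref{cor62}, the needed surjectivity of the $\mathfrak{n}$-cohomology localization. At trivial primes the adjustment must not break the $\mathcal{C}_v$ condition, which is handled by the highly versal property of degree $3$ (Lemma~\ref{highlyversallemma}): for $n \geq 2$ the $\mathcal{N}_v$-action preserves $\mathcal{C}_v(\op{W}(\F)/p^{n+1})$, so any correction coming from $\mathcal{N}_v$ stays inside $\mathcal{C}_v$. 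This is also the precise reason the induction can only begin at $n = 2$, and explains why $\rho_2$ had to be constructed by a separate, condition-free argument at trivial primes.
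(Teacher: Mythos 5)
Your proposal follows essentially the same route as the paper: induction on the power of $p$, with the obstruction to lifting $\rho_n$ to an upper triangular $\rho_{n+1}$ living in $H^2(\op{G}_{\Q,S\cup X_0},\mathfrak{n})$, killed by $\Sha^2_{S\cup X_0}(\mathfrak{n})=0$ (Corollary~\ref{cor62}), followed by a twist by a global $\mathfrak{n}$-valued class (using the surjectivity onto $\bigoplus_{v\in S}H^1(\op{G}_v,\mathfrak{n})$ from the same Corollary) to match the prescribed local conditions at $v\in S$, while at $v\in X_0$ the adjustment comes for free because $\mathfrak{n}$-valued local classes land in $\mathcal{N}_v$. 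The one place where you add genuine clarity is the explicit bootstrapping: you separate the base step $\rho_1\to\rho_2$ (imposing no conditions at trivial primes, then reading off the type of $(\mathcal{C}_v,\mathcal{N}_v)$ from $\rho_2|_v$) from the inductive step for $n\geq 2$ where you invoke the highly versal property of degree $3$ from Lemma~\ref{highlyversallemma}; the paper runs a uniform induction and leaves this dependence of the types on $\rho_2$ implicit, relying instead on the observation that for $n\geq 2$ a $p^n$-twist does not disturb the divisibility conditions defining $\mathcal{D}_v^{\op{nr}}$ and $\mathcal{D}_v^{\op{ram}}$. Your remark that the induction can only start cleanly at $n=2$ is exactly the right diagnosis of why the base case needs separate treatment.
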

 \begin{proof}
The proof is by induction. Let $n<N$ and assume that $\bar{\rho}$ lifts to
\[\rho_n:\op{G}_{\Q,S\cup X_0}\rightarrow \op{GL}_2(\op{W}(\F)/p^n)\]$\rho_n=\mtx{\psi_n}{\ast}{0}{1}$ satisfying the conditions $\mathcal{C}_v$ at the primes $v\in S\cup X$. We show that $\rho_n$ may be lifted one more step to \[\rho_{n+1}:\op{G}_{\Q,S\cup X_0}\rightarrow \op{GL}_2(\op{W}(\F)/p^{n+1})\] such that $\rho_{n+1}=\mtx{\psi_{n+1}}{\ast}{0}{1}$, satisfying the same conditions.
\par We proceed in two steps.
\begin{enumerate}
    \item First, we show that $\rho_n$ lifts to a Galois representation $r_{n+1}$ of the form $r_{n+1}=\mtx{\psi_{n+1}}{\ast}{0}{1}$.
    \item Next, we replace $r_{n+1}$ by
    \[\rho_{n+1}:=(\op{Id}+p^n \iota(f)) r_{n+1}\]for a suitable cohomology class $f\in H^1(\op{G}_{\Q, S\cup X_0}, \mathfrak{n})$ such that it satisfies the constraints $\mathcal{C}_v$ at all primes $v\in S\cup X_0$. 
\end{enumerate} It is clear that there is a set theoretic lift \[\tau_{n+1}:\op{G}_{S\cup X_0}\rightarrow \op{GL}_2(\op{W}(\F)/p^{n+1})\] of the form $\tau_{n+1}=\mtx{\psi_{n+1}}{\ast}{0}{1}$. In other words, this lift is not necessarily a homomorphism but however, the diagonal entries are the specified characters. Note that for $g, h\in \op{G}_{\Q, S\cup X_0}$, \[\tau_{n+1}(gh) \tau_{n+1}(h)^{-1}\tau_{n+1}(g)^{-1}\] is a unipotent matrix which reduced to the identity modulo $p^n$. This function on $\op{G}_{\Q, S\cup X_0}\times\op{G}_{\Q, S\cup X_0}$ is trivial precisely when $\tau_{n+1}$ is a homomorphism. Identify $\mathfrak{n}$ with the kernel of the reduction map 
\[\mathfrak{n}=\op{ker}\left\{ \op{U}(\op{W}(\F)/p^{n+1})\rightarrow \op{U}(\op{W}(\F)/p^{n}) \right\}.\] Associate a cohomological obstruction to lifting as follows, set 
\[\mathcal{O}(\rho_n)(g,h):=\tau_{n+1}(gh)\tau_{n+1}(h)^{-1} \tau_{n+1}(g)^{-1}.\] This cocycle gives a well defined cohomology class $\mathcal{O}(\rho_n)\in H^2(\op{G}_{\Q, S\cup X_0}, \mathfrak{n})$. It is an easy exercise to show that the obstruction class $\mathcal{O}(\rho_n)$ is trivial if and only if there is a lift $r_{n+1}:\op{G}_{\Q, S\cup X_0}\rightarrow \op{GL}_2(\op{W}(\F)/p^{n+1})$ of $\rho_n$ such that 
\begin{enumerate}
    \item $r_{n+1}$ is not just a set theoretic lift, but also a homomorphism, 
    \item $r_{n+1}$ is of the form $\mtx{\psi_{n+1}}{\ast}{0}{1}$.
\end{enumerate} Note that each local representation $\rho_{n|v}$ satisfies the liftable deformation condition $\mathcal{C}_v$. As a result, the obstruction class $\mathcal{O}(\rho_n)$ becomes trivial when localized at a prime $v\in S\cup X_0$. As a result, $\mathcal{O}(\rho_n)$ lies in $\Sha^2_{S\cup X_0}(\g)$. On the other hand, $X_0$ is chosen so that $\Sha^2_{S\cup X_0}(\g)=0$ (according to Corollary $\ref{cor62}$). Therefore, the obstruction class is also trivial and $\rho_{n}$ lifts to $r_{n+1}$ satisfying the above conditions.
\par We now show that for a suitable cohomology class $f\in H^1(\op{G}_{S\cup X_0}, \mathfrak{n})$, the twist $\rho_{n+1}:=(\op{Id}+p^n \iota(f))r_{n+1}$ satisfies the local conditions $\mathcal{C}_v$ at all primes $v\in S\cup X_0$. First we show that for any cohomology class $f\in H^1(\op{G}_{S\cup X_0}, \mathfrak{n})$, the deformation $(\op{Id}+p^n \iota(f))r_{n+1}$ satisfies the condition $\mathcal{C}_v$ at each prime $v\in X_0$. Note that $r_{n+1}=\mtx{\psi_{n+1}}{\ast}{0}{1}$and that $\varphi(\sigma_v)=1$ since $\bar{\rho}(\sigma_v)=\op{Id}$ ($v$ is a trivial prime). Since $k\equiv 2\mod{p^N(p-1)}$, and $\psi=\chi^{k-1}\tilde{\eta}$, it follows that $\psi_{n+1}(\sigma_v)=v$. Therefore, \[r_{n+1}(\sigma_v)=\mtx{v}{x}{0}{1}\text{ and }r_{n+1}
(\sigma_v)=\mtx{1}{y}{0}{1}.\] Moreover, for any cohomology class $f\in H^1(\op{G}_{\Q, S\cup X_0}, \mathfrak{n})$, the twist $(\op{Id}+p^n \iota(f))r_{n+1}$ satisfies the same relations at a prime $v\in X_0$ (with $x$ and $y$ possibly replaced by some other elements $x'$ and $y'$). As a result, the twist $(\op{Id}+p^n f)r_{n+1}$ automatically satisfies $\mathcal{C}_v$. Here $\mathcal{C}_v$ is of type (3) or (4) depending on whether $\rho_{2}$ is unramified at $v$ or not.
\par On the other hand, at each prime $v\in S$, there is a cohomology class $f_v\in H^1(\op{G}_{v}, \mathfrak{n})$ such that the twist $(\op{Id}+\iota_v(f_v)) r_{n+1|v}$ satisfies $\mathcal{C}_v$. This follows from inspecting the two cases outlined in the proof of Proposition $\ref{existencecv}$. According to Corollary $\ref{cor62}$, the map
\[H^1(\op{G}_{\Q,S\cup X_0}, \mathfrak{n})\rightarrow \bigoplus_{v\in S} H^1(\op{G}_v, \mathfrak{n})\] is surjective, there is a global cohomology class $f$ such that $f_{\restriction v}=f_v$ for all $v\in S$. The twist $\rho_{n+1}:=(\op{Id}+p^n \iota(f))r_{n+1}$ satisfies all conditions $\mathcal{C}_v$. Furthermore, $\rho_{n+1}$ is also of the form $\mtx{\psi_{n+1}}{\ast}{0}{1}$ since $f$ is a cohomology class with values in $\mathfrak{n}$. This completes the induction step.
 \end{proof}
 
Before proving Theorem $\ref{th51}$, we briefly outline the strategy of the proof. The residual representation $\bar{\rho}$ is lifted to a characteristic zero representation
\[\rho:\op{G}_{\Q, S\cup X}\rightarrow \op{GL}_2(\op{W}(\F))\] satisfying the required conditions. This is done by showing that there is a compatible system of lifts 
\[\rho_n:\op{G}_{\Q, S\cup X}\rightarrow \op{GL}_2(\op{W}(\F)/p^n)\] for every integer $n>0$. Proposition $\ref{prop63}$ asserts that there is a lift $\rho_N$ of the form 
\[\rho_N=\mtx{\psi_N}{\ast}{0}{1}.\]
Next, $\rho_N$ is lifted to a representation $\rho_{N+1}$ whose image is no longer upper triangular. It is shown that 
$\rho_{N+1}$ can be chosen so that its image contains
\[\op{SL}_2^{N}:=\op{ker}\left\{\op{SL}_2(\op{W}(\F)/p^{N+1})\rightarrow \op{SL}_2(\op{W}(\F)/p^{N})\right\}.\] As a consequence, any characteristic zero lift of $\rho_{N+1}$ will be irreducible. At this stage, it is shown that $\rho_{N+1}$ may be lifted to $\rho$ one step at a time as depicted in the following diagram

\[\begin{tikzpicture}[node distance = 1.5 cm, auto]
      \node (GSX) at (0,0){$\op{G}_{\Q}$};
      \node (GL2) at (5,0){$\op{GL}_{2}(\op{W}(\F)/p^N).$};
      \node (GL2Wn) at (3,2)[above of= GL2]{$\op{GL}_{2}(\op{W}(\F)/p^{m})$};
      \node (GL2Wnplus1) at (5,4){$\op{GL}_{2}(\op{W}(\F)/p^{m+1})$};
      \draw[->] (GSX) to node [swap]{$\rho_{N+1}$} (GL2);
      \draw[->] (GL2Wn) to node {} (GL2);
      \draw[->] (GSX) to node [swap]{$\rho_m$} (GL2Wn);
      \draw[->] (GL2Wnplus1) to node {} (GL2Wn);
      \draw[dashed,->] (GSX) to node {$\rho_{m+1}$} (GL2Wnplus1);
      \end{tikzpicture}\] 
\begin{proof}[Proof of Theorem $\ref{th51}$]
The result is completely analogous to that of \cite{hamblenramakrishna}. The only difference lies in the fact that in our construction, $\bar{\rho}$ is first lifted to a representation of the form $\rho_N=\mtx{\psi_N}{\ast}{0}{1}$, before it is lifted to characteristic zero.
\par Assume without loss of generality that $N>1$. Proposition $\ref{prop63}$ asserts that $\bar{\rho}$ lifts to $\rho_N=\mtx{\psi_N}{\ast}{0}{1}$ satisfying the the local conditions $\mathcal{C}_v$ at the primes $v\in S\cup X_0$. We first show that $\rho_N$ lifts to a representation 
\begin{equation}\label{rhoprime}\rho_{N+1}':\op{G}_{\Q, S\cup X_0}\rightarrow \op{GL}_2(\op{W}(\F)/p^{N+1}).\end{equation} Let $\tau_{N+1}:\op{G}_{\Q, S\cup X_0}\rightarrow \op{GL}_2(\op{W}(\F)/p^{N+1})$ be a set theoretic lift of $\rho_N$ with determinant equal to $\psi_{N+1}$. In other words, $\tau_{N+1}$ is a lift which is not necessarily a homomorphism, however, the $\det \tau_{N+1}$ is equal to $\psi_{N+1}$. Such a lift $\tau_{N+1}$ clearly does exist. Identify $\g$ with the kernel of the mod-$p^N$ reduction map
\[\op{ker}\left\{\op{SL}_2(\op{W}(\F)/p^{N+1})\rightarrow \op{SL}_2(\op{W}(\F)/p^{N}) \right\}.\]
Here, $X\in \g$ is identified with $\op{Id}+p^N X$.
For $g, h\in \op{G}_{\Q, S\cup X_0}$, set
\[\mathcal{O}(\rho_N)(g,h):=\tau_{n+1}(gh) \tau_{n+1}(h)^{-1}\tau_{n+1}(g)^{-1}\in \g.\] This defines a cohomology class in $H^2(\op{G}_{\Q, S\cup X_0}, \g)$ which vanishes precisely when there is a representation $\rho_{N+1}'$ lifting $\rho_N$ as in $\eqref{rhoprime}$. Since $\rho_N$ satisfies a liftable deformation condition at each prime $v\in S\cup X_0$, it follows that at each prime $v\in S\cup X_0$, the restriction of $\mathcal{O}(\rho_N)$ to $\op{G}_v$ is trivial. As a result, this obstruction class lies in $\Sha^2_{S\cup X_0}(\g)$. The set $X_0$ is chosen so that $\Sha^2_{S\cup X_0}(\g)=0$. Hence, there is a representation
\[\rho_{N+1}':\op{G}_{\Q, S\cup X_0}\rightarrow \op{GL}_2(\op{W}(\F)/p^{N+1})\] lifting $\rho_N$.
\par Let $v\in S\cup X_0$, since $\mathcal{C}_v$ is a liftable deformation condition and $\rho_{N\restriction v}$ satisfies $\mathcal{C}_v$, there is a lift 
\[\varrho_v:\op{G}_v\rightarrow \op{GL}_2(\op{W}(\F)/p^{N+1})\]satisfying $\mathcal{C}_v$. Therefore, by Fact $\ref{ptorsor}$, there is a cohomology class $f_v\in H^1(\op{G}_v, \g)$ such that the twist $(\op{Id}+p^N f_v) \rho_{N+1|v}'$ is equal to $\varrho_v$ and hence, satisfies $\mathcal{C}_v$. At this stage, it follows from \cite[Theorem 41]{hamblenramakrishna} that there are two trivial primes $\{v_1, v_2\}$ disjoint from $S\cup X_0$ and a global cohomology class \[f\in H^1(\op{G}_{\Q, S\cup X_0\cup \{v_1, v_2\}}, \g)\] such that the representation 
\[\rho_{N+1}:=(\op{Id}+p^{N} f) \rho_{N+1}'\]
satisfies the local conditions $\mathcal{C}_v$ at the primes $v\in S\cup X_0\cup \{v_1, v_2\}$. At the primes $v_1$ and $v_2$, the deformation condition $\mathcal{C}_v$ is of type (2). It follows from the argument in the proof of \cite[Proposition 42]{hamblenramakrishna} that the image of $\rho_{N+1}$ contains 
\[\op{SL}_2^{N}:=\op{ker}\left\{\op{SL}_2(\op{W}(\F)/p^{N+1})\rightarrow \op{SL}_2(\op{W}(\F)/p^{N})\right\}.\] As a result, any characteristic zero lift 
\[\rho:\op{G}_{\Q}\rightarrow \op{GL}_2(\op{W}(\F))\] of $\rho_{N+1}$ must be irreducible.
\par Next, we show via an inductive argument that $\rho_{N+1}$ can be lifted to characteristic zero one step at a time. This argument essentially follows along the lines of the proof of \cite[Theorem 2]{hamblenramakrishna}. We provide a sketch here. It follows from \cite[Proposition 44]{hamblenramakrishna} that there is a finite set of trivial primes $X$ containing $X_0\cup \{v_1, v_2\}$ such that the Selmer and dual Selmer groups 
$H^1_{\mathcal{N}}(\op{G}_{\Q, S\cup X}, \g)$ and $H^1_{\mathcal{N}^{\perp}}(\op{G}_{\Q, S\cup X}, \g^*)$ are both zero. At a prime $v\in X\backslash (\{v_1, v_2\}\cup X_0)$, let $\mathcal{C}_v$ be the deformation functor of type (1). Since the Selmer and dual Selmer groups are zero, it follows by Poitou-Tate that the map
\begin{equation}\label{selmeriso}H^1(\op{G}_{\Q, S\cup X}, \g)\rightarrow \bigoplus_{v\in S\cup X} H^1(\op{G}_v, \g)/\mathcal{N}_v\end{equation} is an isomorphism. In fact, we shall only use surjectivity in what follows. Assume that for some integer $m\geq N+1$, there exists a lift 
\[\rho_m:\op{G}_{\Q, S\cup X}\rightarrow\op{GL}_2(\op{W}(\F)/p^m)\]satisfying the conditions $\mathcal{C}_v$ at all primes $v\in S\cup X$. Recall the argument from earlier in the proof to show that $\rho_{N}$ lifted to $\rho_{N+1}'$. This crucially required that $\rho_N$ satisfy liftable deformation conditions at the primes at which it ramifies. The same argument shows that $\rho_m$ lifts to a Galois representation
\[\rho_{m+1}':\op{G}_{\Q, S\cup X}\rightarrow \op{GL}_2(\op{W}(\F)/p^{m+1}).\] In order to complete the inductive step, we show that after twisting $\rho_{m+1}'$ by a suitable cohomology class, it can be arranged to satisfy the deformation conditions $\mathcal{C}_v$ at all primes $v\in S\cup X$. At each prime $v\in S\cup X$, there exists $z_v \in H^1(\op{G}_v, \g)$ such that the twist $(\op{Id}+p^m z_v)\rho_{m+1|v}'$ satisfies $\mathcal{C}_v$.
\par For $v\in S$, the space $\mathcal{N}_v$ is the tangent space of $\mathcal{C}_v$ and hence stabilizes $\mathcal{C}_v$ via the twisting action. On the other hand, for $v\in X$, the pair $(\mathcal{C}_v, \mathcal{N}_v)$ is a highly versal pair of degree $3$ (see Lemma $\ref{highlyversallemma}$). Since $N>1$, it follows that $m+1\geq 3$. Let $v\in S\cup X$, it follows that if $g_v\in \mathcal{N}_v$, then, the twist 
\[\left(\op{Id}+p^m(z_v+g_v)\right)\rho_{m+1|v}'\]also satisfies $\mathcal{C}_v$. Since the map $\eqref{selmeriso}$ is surjective, there is a cohomology class $z\in H^1(\op{G}_{\Q, S\cup X}, \g)$ such that \[z_{\restriction v}=z_v\mod{\mathcal{N}_v}\] for all $v\in S\cup X$. Then setting
\[\rho_{m+1}:=(\op{Id}+p^m z)\rho_{m+1}',\]find that $\rho_{m+1}$ satisfies the conditions $\mathcal{C}_v$ for $v\in S\cup X$. It follows from the inductive argument that there is a characteristic zero lift 
\[\rho:\op{G}_{\Q, S\cup X}\rightarrow \op{GL}_2(\op{W}(\F))\]lifting $\rho_{N+1}$. Recall that all lifts are stipulated to have determinant $\psi$. Moreover, as mentioned earlier in the proof, $\rho$ is necessarily irreducible. Furthermore, the characteristic zero representation satisfies the local conditions $\mathcal{C}_v$ at each prime $v\in S\cup X$. In particular, $\rho_{\restriction p}$ satisfies the ordinary deformation condition. It follows from the main result of Skinner and Wiles \cite{skinnerwiles} that $\rho$ arises from a Hecke eigencuspform.
\end{proof}
Next, we give the proof of Theorem $\ref{mainlifting}$.
\begin{proof}[Proof of Theorem $\ref{mainlifting}$] Let $V$ be the underlying space for the Galois representation associated to $f$. The integral representation $\rho$ is via the Galois action on a certain choice of Galois stable lattice $T$ in $V$. Here, the representation $\rho_N$ coincides with $\rho_{T/N}$ and since $\rho_N$ is of the form $\mtx{\psi_N}{\ast}{0}{1}$, it follows that $\rho_{T/N}$ is aligned. It follows from Theorem $\ref{theorem1}$ that the $p$-primary Selmer group has $\mu$-invariant $\geq N$.

\end{proof}

\section{Examples}\label{section 7}

\begin{Example}
This example clarifies the difference between the residually aligned and residually skew cases, and illustrates Theorem $\ref{theorem1}$. All computations are aided by Sage and consultation of data from LMFDB. Set $p=5$, the Selmer groups in this example will be considered over $\Q^{\op{cyc}}$, the cyclotomic $\Z_5$-extension of $\Q$. Consider the three elliptic curves of Cremona label 11a, namely, $E_1=11a1$, $E_2=11a2$ and $E_3=11a3$. There is an isogeny each pair of elliptic curves of degree dividing $25$. Let 
\[\rho_{E_i}: \op{G}_{\Q}\rightarrow \op{GL}_2(\Z_5)\] be the Galois representation on the $5$-adic Tate module of $E_i$. In all three cases, the residual representation is reducible and the semisimplification of $\bar{\rho}_{E_i}$ equals \[\bar{\rho}_{E_i}^{\op{ss}}\simeq \mtx{\bar{\chi}}{0}{0}{1}.\] However the representations $\bar{\rho}_{E_i}$ are all distinct. Let $\mu_{E_i}$ denote the $\mu$-invariant of the $5$-primary Selmer group associated to $E_i$. The representation $\bar{\rho}_{E_1}$ is decomposes into a direct sum of characters
\[\bar{\rho}_{E_1}\simeq \mtx{\bar{\chi}}{0}{0}{1}.\] Since the mod-$p$ cyclotomic character $\bar{\chi}$ is odd, it is easy to see that $\bar{\rho}_{E_1}$ is \textit{aligned}. Direct computation shows that the $\mu$-invariant $\mu_{E_1}=1$. On the other hand, Theorem $\ref{theorem1}$ asserts that $\mu_{E_1}\geq 1$.
\par The representation $\bar{\rho}_{E_2}$ is indecomposable of the form 
\[\bar{\rho}_{E_2}\simeq \mtx{\bar{\chi}}{\ast}{0}{1},\]
hence, it is aligned. Theorem $\ref{theorem1}$ asserts that $\mu_{E_2}\geq 1$. Direct computation shows that $\mu_{E_2}=2$.
\par The representation $\bar{\rho}_{E_2}$ is indecomposable of the form 
\[\bar{\rho}_{E_3}\simeq \mtx{1}{\ast}{0}{\bar{\chi}}.\] It is easy to see that the residual representation is skew. Direct computation shows that $\mu_{E_3}=0$.
The authors have inspected similar examples which show that when the residual representation is skew, the $\mu_{E_3}=0$. Since any representation which is aligned is isogenous to one which is skew, this provides further evidence for Greenberg's conjecture.
\end{Example}
\begin{Example}
We show that there are many examples of Galois representations $\bar{\rho}$ satisfying the conditions of Theorem $\ref{mainlifting}$. Such examples are constructed in \cite[Lemma 3.4]{ray1}. Let $\op{Cl}(\Q(\mu_p))$ denote the class group of $\Q(\mu_p)$ and set $\mathcal{C}:=\op{Cl}(\Q(\mu_p))\otimes \F_p$. As a Galois module, $\mathcal{C}$ decomposes into eigenspaces
\[\mathcal{C}=\bigoplus_{i=0}^{p-2} \mathcal{C}(\bar{\chi}^i).\] Assume that $p\geq 5$ and there is an odd integer $2\leq i\leq p-3$ such that $\mathcal{C}(\bar{\chi}^i)\neq 0$. Then according to \textit{loc. cit.}, associated to any $\F_p$-quotient of $\mathcal{C}(\bar{\chi}^i)$, there is a residual representation $\bar{\rho}=\mtx{\bar{\chi}^i}{\ast}{0}{1}$ satisfying the conditions of \cite[Theorem 2]{hamblenramakrishna}. These are precisely, the conditions we impose in Theorem $\ref{mainlifting}$. The reader is also referred to \cite[section 7]{hamblenramakrishna} where more examples are computed.
\end{Example}


\begin{thebibliography}{1}
\bibitem{bellaichepollack}J. Bella\"iche, R. Pollack. \textit{Congruences with Eisenstein series and mu-invariants.} arXiv preprint arXiv:1806.04240 (2018).

\bibitem{coateshowson}J.H. Coates, S. Howson. \textit{Euler characteristics and elliptic curves II.} Journal of the Mathematical Society of Japan 53.1 (2001): 175-235.


\bibitem{drinen}M.J. Drinen, \textit{Finite submodules and Iwasawa $\mu$-invariants.} Journal of Number Theory 93.1 (2002): 1-22.

\bibitem{ferrerowashington}B. Ferrero, L.C. Washington. \textit{The Iwasawa invariant $\mu_p$ vanishes for abelian number fields.} Annals of Mathematics (1979): 377-395.


\bibitem{greenbergIT} R. Greenberg. \textit{Iwasawa theory for p-adic representations.} Algebraic Number Theory—in Honor of K. Iwasawa. Mathematical Society of Japan, 1989.

\bibitem{greenbergIWEC}R. Greenberg. \textit{Iwasawa theory for elliptic curves.} Arithmetic theory of elliptic curves. Springer, Berlin, Heidelberg, 1999. 51-144.

\bibitem{greenbergvatsal}R. Greenberg and V. Vatsal, \textit{On the Iwasawa invariants of elliptic curves}, Invent. math. 142.1 (2000): 17-63.


\bibitem{hamblenramakrishna} S. Hamblen, R. Ramakrishna. \textit{Deformations of certain reducible {G}alois representations {II}}. In:Amer. J. Math.130.4(2008),pp.913-944.

\bibitem{HowsonEC}S. Howson, \textit{Euler characteristics as invariants of Iwasawa modules.} Proceedings of the London Mathematical Society 85.3 (2002): 634-658.

\bibitem{Howsoncentraltorsion}S. Howson, \textit{Structure of central torsion Iwasawa modules.} Bulletin de la Société Mathématique de France 130.4 (2002): 507-535.

\bibitem{mflim} M.F.Lim \textit{Comparing the $\pi$-primary submodules of the dual Selmer groups.} Asian J. Math. 21 (2017), no. 6, 1153–1181.

\bibitem{mazur1}B. Mazur, \textit{Rational points of abelian varieties with values in towers of number fields.} Inventiones mathematicae 18.3-4 (1972): 183-266.

\bibitem{Mazurintro}
B. Mazur, \textit{An introduction to the deformation theory of Galois representations.} Modular forms and Fermat’s last theorem. Springer, New York, NY, 1997. 243-311.

\bibitem{NSW}J. Neukirch, A. Schmidt, K. Wingberg, \textit{Cohomology of number fields.} Vol. 323. Springer Science and Business Media, 2013.

\bibitem{perbet}G.Perbet, \textit{On Iwasawa invariants in p-adic Lie extensions.} Algebra \& Number Theory 5.6 (2012): 819-848.

\bibitem{PatEx}S.Patrikis, \textit{Deformations of Galois representations and exceptional monodromy.} Inventiones mathematicae 205.2 (2016): 269-336.

\bibitem{Ram2} R. Ramakrishna, \textit{Lifting Galois representations.} Inventiones mathematicae 138.3 (1999): 537-562.

\bibitem{RaviFM}R. Ramakrishna, \textit{Deforming Galois representations and the conjectures of Serre and Fontaine-Mazur.} Annals of mathematics 156.1 (2002): 115-154.

\bibitem{ray1}A. Ray, \textit{Constructing certain special analytic Galois extensions.} Journal of Number Theory 212 (2020): 105-112.

\bibitem{raysujatha}A. Ray, R. Sujatha. \textit{Euler Characteristics and their Congruences in the Positive Rank Setting.} Canadian Mathematical Bulletin (2019): 1-19.

\bibitem{skinnerwiles}C.M. Skinner, A.J. Wiles. \textit{Residually reductible representations and modular forms.} Publications Mathématiques de l'IH\'ES 89 (1999): 5-126.

\bibitem{taylor}R. Taylor, \textit{On icosahedral Artin representations, II.} American journal of mathematics 125.3 (2003): 549-566.

\bibitem{venjakob}O. Venjakob, \textit{On the structure theory of the Iwasawa algebra of a p-adic Lie group.} Journal of the European Mathematical Society 4.3 (2002): 271-311.

\bibitem{venjakobmu}O. Venjakob, \textit{On the Iwasawa theory of p-adic Lie extensions.} Compositio Mathematica 138.1 (2003): 1-54.

\end{thebibliography}
\end{document}